\documentclass[12pt]{article}
\usepackage{amsmath,amssymb,amsthm,bbm,color,xcolor}
\usepackage[margin=1in,a4paper]{geometry}
\usepackage{graphicx}
\usepackage{setspace}
\usepackage[round]{natbib}

\usepackage{tikz}
\usepackage{pgfplots}
\pgfplotsset{compat=1.18}
\usepackage{amsmath}
\usepackage{xcolor,url}

\usepackage{multirow}
\usepackage{rotating}

\newtheorem{theorem}{Theorem}
\newtheorem{lemma}[theorem]{Lemma}
\newtheorem{proposition}[theorem]{Proposition}

\usepackage[colorlinks,
  linkcolor=blue,
  citecolor=blue,
  urlcolor=blue,
  breaklinks
]{hyperref}

\newcommand{\R}{\mathbb{R}}
\newcommand{\eps}{\epsilon}
\newcommand{\EE}[1]{\mathbb{E}\left[{#1}\right]}
\newcommand{\EEst}[2]{\mathbb{E}\left[{#1}\  \middle| \ {#2}\right]}

\newcommand{\PP}[1]{\mathbb{P}\left\{{#1}\right\}}
\newcommand{\PPst}[2]{\mathbb{P}\left\{{#1}\  \middle| \ {#2}\right\}}

\newcommand{\Pp}[2]{\mathbb{P}_{{#1}}\left\{{#2}\right\}}
\newcommand{\eqd}{\stackrel{\textnormal{d}}{=}}
\newcommand{\One}[1]{{\mathbbm{1}}\left\{{#1}\right\}}

\newcommand{\iidsim}{\stackrel{\textnormal{iid}}{\sim}}

  \newcommand\independent{\protect\mathpalette{\protect\independenT}{\perp}}
\def\independenT#1#2{\mathrel{\rlap{$#1#2$}\mkern2mu{#1#2}}}

\DeclareRobustCommand{\stirling}{\genfrac\{\}{0pt}{}}

\usepackage{algorithm,algorithmic}

\newcommand{\kh}{\widehat{k}}
\newcommand{\fdp}{\mathsf{FDP}}
\newcommand{\fdr}{\mathsf{FDR}}
\newcommand{\Sbh}{\widehat{S}_{\textnormal{BH}}}
\newcommand{\nulls}{\mathcal{H}_0}

\title{False discovery rate control with compound p-values}
\author{Rina Foygel Barber\thanks{Department of Statistics, University of Chicago} \and Richard J.\ Samworth\thanks{Statistical Laboratory, University of Cambridge}}
\date{\today}

\begin{document}

\setstretch{1.7}

\maketitle

\begin{abstract}
    In the setting of multiple testing, compound p-values generalize p-values by asking for superuniformity to hold only \emph{on average} across all true nulls.  We study the properties of the Benjamini--Hochberg procedure applied to compound p-values.  Under independence, we show that the false discovery rate (FDR) is at most $1.93\alpha$, where $\alpha$ is the nominal level, and exhibit a distribution for which the FDR is $\frac{7}{6}\alpha$.  If additionally all nulls are true, then the upper bound can be improved to $\alpha + 2\alpha^2$, with a corresponding worst-case lower bound of $\alpha + \alpha^2/4$. Under positive dependence, on the other hand, we demonstrate that FDR can be inflated by a factor of $\mathcal{O}(\log m)$, where~$m$ is the number of hypotheses. We provide numerous examples of settings where compound p-values arise in practice, either because we lack sufficient information to compute non-trivial p-values, or to facilitate a more powerful analysis.
\end{abstract}
\section{Introduction}\label{sec:intro}

The Benjamini--Hochberg (BH) procedure \citep{benjamini1995controlling} is a celebrated method for achieving false discovery rate control in multiple testing problems.  Suppose we wish to test null hypotheses $H_{0,1},\dots,H_{0,m}$ (for a large number of hypotheses $m$), and aim to identify a set of discoveries---a subset of $[m]:=\{1,\dots,m\}$, corresponding to null hypotheses that we reject---without selecting too many false positives.   In order to run the BH procedure, we need access to valid p-values $p_i$ that test each of the null hypotheses $H_{0,i}$. Formally, writing $\nulls\subseteq[m]$ for the (unknown) set of indices corresponding to true null hypotheses, $p_1,\dots,p_m$ are p-values for testing hypotheses $H_{0,1},\dots,H_{0,m}$ if
\begin{equation}\label{eqn:valid_pvalues}\textnormal{$\PP{p_i\leq t}\leq t$ for all $t\in[0,1]$ and all $i\in\nulls$},\end{equation}
meaning that $p_i$ is superuniform for any true null $H_{0,i}$.

In some settings, it may be the case that $p_1,\ldots,p_m$ only satisfy a weaker notion of validity, namely that they are ``valid on average'', in the following sense:
\begin{equation}\label{eqn:compound_valid_pvalues}\textnormal{$\sum_{i\in\nulls}\PP{p_i\leq t}\leq mt$ for all $t\in[0,1]$}.\end{equation}
Random variables $p_1,\dots,p_m$ satisfying this condition are known as \emph{compound p-values}, first introduced by \citet{armstrong2022false} under a different name, and subsequently studied by \citet{ignatiadis2025empirical} and \citet{ignatiadis2024compound}.\footnote{An earlier use of the term ``compound p-value'' appears in \citet{habiger2014compound}, but this is different from the context of the above line of work: \citet{habiger2014compound}'s work proposes pooling data across different tests to construct each $p_i$, but the resulting $p_i$'s are nonetheless p-values in the sense of~\eqref{eqn:valid_pvalues}.}
Of course, by definition, we can see that p-values are a special case of compound p-values---that is, the condition~\eqref{eqn:compound_valid_pvalues} is strictly weaker than the condition~\eqref{eqn:valid_pvalues}.

The central question of this paper is to examine the theoretical properties of the BH procedure in the setting of compound p-values. 

\subsection{Background: FDR and the Benjamini--Hochberg procedure}\label{sec:background_bh}
We begin with a brief overview of some background on the Benjamini--Hochberg (BH) procedure, and its guarantees for FDR control in various settings. As in~\eqref{eqn:valid_pvalues} above, let $p_1,\dots,p_m$ be p-values for null hypotheses $H_{0,1},\dots,H_{0,m}$. Our task is to select a set of hypotheses $\widehat{S}\subseteq[m]$ for which we reject the null---these are then referred to as the ``rejections'' or ``discoveries''. The false discovery proportion (FDP) is defined as
\[\fdp(\widehat{S}) = \frac{|\widehat{S}\cap \nulls|}{1\vee |\widehat{S}|},\]
the fraction of \emph{false} discoveries (false positives) within a set of discoveries $\widehat{S}\subseteq[m]$. (The denominator is defined as $1\vee|\widehat{S}|= \max\{1,|\widehat{S}|\}$ to ensure that, if no discoveries are made, then FDP is zero.) The aim of false discovery rate (FDR) control is to ensure that 
\[\fdr:=\EE{\fdp(\widehat{S})}\leq \alpha,\]
for some prespecified target level $\alpha\in[0,1]$.

The BH procedure \citep{benjamini1995controlling} selects this set by first defining
\begin{equation}\label{eqn:BH_define_kh}
    \kh = \max\left\{k \in [m]: \sum_{i=1}^m \One{p_i\leq \frac{\alpha k}{m}} \geq k\right\},
\end{equation}
or $\kh=0$ if this set is empty, and then returning the set
\[\Sbh = \left\{i\in[m] : p_i \leq \frac{\alpha\kh}{m}\right\}.\]
By construction, $\kh = |\Sbh|$ is the number of discoveries.

There are many existing results on the BH procedure's false discovery rate, $\fdr = \EE{\fdp(\Sbh)}$.
Under independence, the BH procedure is known to provide FDR control \citep{benjamini1995controlling}:
\begin{equation}\label{eqn:bh_independent}\textnormal{For independent p-values $p_1,\dots,p_m$, \ }
\fdr = \alpha \cdot \frac{|\nulls|}{m} \leq \alpha.\end{equation}
That is, the FDR is guaranteed to be no larger than $\alpha$, the target level.
(Note that the BH procedure is actually conservative, by a factor of $\frac{|\nulls|}{m}$---the fraction of true nulls among all hypotheses being tested. A modified version of the BH procedure, proposed by \citet{storey2004strong}, estimates and then corrects for this factor.)  In fact, the same bound on FDR holds more broadly, under a positive dependence assumption on the p-values (the PRDS condition---see Section~\ref{sec:PRDS} for details) \citep{benjamini2001control}:
\begin{equation}\label{eqn:bh_prds}\textnormal{For PRDS p-values $p_1,\dots,p_m$, \ }\fdr \leq \alpha \cdot \frac{|\nulls|}{m} \leq \alpha.\end{equation}
On the other hand, if the p-values are not restricted to exhibit only positive dependence, then the FDR can be substantially worse. \citet{benjamini2001control} prove the bound:
\begin{equation}\label{eqn:bh_arbitrary}\textnormal{For arbitrarily dependent p-values $p_1,\dots,p_m$, \ }\fdr \leq \min\left\{\alpha\mathsf{h}_m \cdot \frac{|\nulls|}{m},1\right\},\end{equation}
where $\mathsf{h}_m = 1 + \dots + \frac{1}{m}\asymp \log m$ is the $m$th term in the harmonic series.  This bound cannot be improved, as verified by an explicit construction due to \citet{guo2008control}. In other words, the BH procedure may, in the worst case, increase the FDR by a factor of $\mathcal{O}(\log m)$, relative to the desired level~$\alpha$.

\subsection{Background: compound p-values and approximate compound p-values}\label{sec:compound}

Next we turn to the setting of compound p-values. Let $p_1,\dots,p_m$ be compound p-values as in~\eqref{eqn:compound_valid_pvalues}, for hypotheses $H_{0,1},\dots,H_{0,m}$. We can again run the BH procedure on these $p_i$'s---but since the $p_i$'s are no longer assumed to be p-values, what can be said about the resulting FDR control properties? 
Recent work by \citet{armstrong2022false} shows that the BH procedure satisfies
\begin{equation}\label{eqn:armstrong_FDR_bound}\textnormal{If $p_1,\dots,p_m$ are arbitrarily dependent compound p-values, \ }\fdr \leq \min\{\alpha \mathsf{h}_m, 1\}.\end{equation}
Compared to the result above for arbitrarily dependent p-values~\eqref{eqn:bh_arbitrary}, this differs only in the factor $\frac{|\nulls|}{m}$. As for the earlier result~\eqref{eqn:bh_arbitrary}, this upper bound on FDR again cannot be improved without further assumptions, due to the same type of construction as in \citet{guo2008control}.

While compound p-values are already a broader class than p-values, we can consider further relaxing the notion of validity of $p_1,\dots,p_m$. \citet{ignatiadis2024compound} define such a relaxation: $p_1,\dots,p_m$ are $(\epsilon,\delta)$-approximate compound p-values if they satisfy 
\begin{equation}\label{eqn:approx_compound_valid_pvalues}\textnormal{$\sum_{i\in\nulls}\PP{p_i\leq t}\leq m\big(t(1+\epsilon)+\delta\big)$ for all $t\in[0,1]$}.\end{equation}
For example, \citet{gasparin2025combining} refer to asymptotic p-values (where the random variables $p_i$ converge in distribution to p-values); in the finite-sample setting, the parameters~$\epsilon$ and $\delta$ relax the requirement of validity.

It is straightforward to see that, if $p_1,\dots,p_m$ are $(\eps,\delta)$-approximate compound p-values, then defining $\tilde{p}_i = \min\{1, p_i(1+\eps) +\delta \}$, it holds that $\tilde{p}_1,\dots,\tilde{p}_m$ are compound p-values as in~\eqref{eqn:compound_valid_pvalues}---that is, with a small correction factor, approximate compound p-values can be converted to compound p-values.

\subsection{Contributions and outline}

The bound~\eqref{eqn:armstrong_FDR_bound} provides an upper bound on FDR for the BH procedure when run on compound p-values, but this bound does not address settings where we may have additional assumptions on the $p_i$'s---independence, a positive dependence condition, or perhaps some other type of restriction that might allow us to avoid the $\mathcal{O}(\log m)$ inflation of the bound on FDR. While independence and, more generally, PRDS dependence lead to FDR no larger than $\alpha$ for p-values (as in~\eqref{eqn:bh_independent} and~\eqref{eqn:bh_prds} above), no such results exist in the literature for compound p-values. Is a bound such as $\fdr\leq\alpha$ possible in these settings? Or is the bound $\fdr\leq\alpha\mathsf{h}_m$, as in~\eqref{eqn:armstrong_FDR_bound}, the best we can hope for in the setting of compound p-values? 

In this work, we answer these questions, showing that in the case of independent p-values it holds that $\fdr\leq\mathcal{O}(\alpha)$, while for the PRDS setting, a bound of the form $\fdr\leq \mathcal{O}(\alpha\mathsf{h}_m)$ is the best possible bound.
Our main results addressing these questions are presented in Section~\ref{sec:main_results}. In Section~\ref{sec:examples}, we develop several examples of statistical settings where compound p-values arise, and examine how the framework of compound p-values can allow for more powerful inference by pooling information across tests. We present empirical results on a real data set of online article click rate data in Section~\ref{sec:experiments}, and conclude with a discussion of related work and open questions in Section~\ref{sec:discussion}. All proofs are deferred to the Appendix.

\section{Main results}\label{sec:main_results}
In this section, we present our main results to characterize the FDR of the Benjamini--Hochberg procedure for compound p-values.
The proofs for results stated in this section appear in Appendices~\ref{app:proofs_mainresults} and~\ref{app:proofs_constructions}.

\subsection{Results under independence}
Our first main result establishes a bound on FDR in the setting of independent compound p-values, verifying that in the worst case, the FDR can increase by at most a factor of $\leq 1.93$ relative to the target level $\alpha$.\footnote{In this section, and for all results presented in this work that assume independence of the $p_i$'s, in fact it suffices to assume a slightly weaker condition: the values $(p_i)_{i\in\nulls}$ corresponding to true nulls are mutually independent, and are independent of the non-null values $(p_i)_{i\not\in\nulls}$. This is the same as the independence assumption needed for the original FDR control result of \citet{benjamini1995controlling}.}
\begin{theorem}[Upper bound for independent compound p-values]\label{thm:fdr_independent}
    Fix any $\alpha\in[0,1]$, any $m \geq 1$, and any $\nulls\subseteq[m]$. Let $p_1,\dots,p_m\in[0,1]$ be independent compound p-values. Then the BH procedure at level $\alpha$ satisfies
    \[\fdr \leq 1.93\alpha.\]
\end{theorem}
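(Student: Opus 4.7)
The plan is to combine the classical leave-one-out decomposition of the FDR with a careful analysis of how the BH step count varies under single-coordinate perturbations. For each $i \in [m]$, let $\kh^{(i)}$ denote the BH step count computed when $p_i$ is replaced by $0$, so $\kh^{(i)}$ is a function of $(p_j)_{j\neq i}$ only. A routine case analysis yields
$$
\frac{\One{p_i \leq \alpha \kh/m}}{\kh \vee 1} \;=\; \frac{\One{p_i \leq \alpha \kh^{(i)}/m}}{\kh^{(i)}},
$$
with the convention $0/0:=0$: if $p_i \leq \alpha \kh/m$, then setting $p_i:=0$ does not alter the BH step, so $\kh^{(i)} = \kh$; if $p_i > \alpha \kh/m$, one verifies $p_i > \alpha \kh^{(i)}/m$ as well, so both sides vanish. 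Under the independence hypothesis, $\kh^{(i)} \independent p_i$ for each $i \in \nulls$, and so, writing $F_i$ for the CDF of $p_i$ and applying Fubini,
$$
\fdr \;=\; \sum_{i\in\nulls} \EE{\frac{F_i(\alpha \kh^{(i)}/m)}{\kh^{(i)}}} \;=\; \sum_{k=1}^m \frac{1}{k}\, \sum_{i\in\nulls} F_i\!\bigl(\alpha k/m\bigr) \, \PP{\kh^{(i)} = k}.
$$
Were the distributions of the $\kh^{(i)}$ identical across $i$, the compound property $\sum_{i\in\nulls} F_i(\alpha k/m) \leq \alpha k$ would give $\fdr \leq \alpha$ directly; the whole technical content lies in controlling the $i$-dependence.

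My plan is then to sandwich $\kh^{(i)}$ between two symmetric random variables not depending on $i$: below by $\kh$, and above by $\kh^{*} := \max\{k \in [m] : F(\alpha k/m) \geq k-1\}$, giving $\kh \leq \kh^{(i)} \leq \kh^{*}$. A first-pass bound, using the monotonicity of $F_i$ in the numerator and the lower bound $\kh^{(i)} \geq \kh$ in the denominator, yields
$$
\fdr \;\leq\; \EE{\frac{1}{\kh\vee 1}\sum_{i\in\nulls} F_i(\alpha \kh^{*}/m)} \;\leq\; \alpha\,\EE{\frac{\kh^{*}}{\kh\vee 1}},
$$
via the compound property at the random threshold $\alpha \kh^{*}/m$. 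This is not yet sharp; to recover the constant $1.93$, one cannot simultaneously inflate the numerator's $\kh^{(i)}$ to $\kh^{*}$ and deflate the denominator's to $\kh$. Instead, I would decompose the sum by the exact value of $\kh^{(i)}$, and exploit that under independence the conditional contribution of $p_i$ to the gap $\kh^{*} - \kh$ can be expressed in terms of the increments of $F_i$ across the grid $\{\alpha k/m\}_{k=1}^m$.

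The hard part, and the heart of the proof, will be controlling this gap contribution tightly. A naive single-threshold bound loses a $\log m$ factor and merely reproduces the worst-case rate in~\eqref{eqn:armstrong_FDR_bound}. The key structural observation I would use is that the event $\{\kh^{*} = k,\, \kh < k\}$ forces $F(\alpha k/m) = k-1$ exactly, i.e.\ $\kh^{*}$ is achieved at a ``near-miss'' configuration where a single extra p-value just below $\alpha k/m$ would trigger BH rejection. Telescoping the compound inequalities across consecutive thresholds and optimising over the joint distribution of the $F_i$'s subject to $\sum_{i\in\nulls} F_i(\alpha k/m) \leq \alpha k$ then reduces the question to a finite-dimensional extremal problem whose optimal value is the stated constant. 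I expect this final optimisation, rather than the combinatorial setup, to be the most delicate step.
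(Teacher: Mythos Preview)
Your leave-one-out decomposition and the identity $\fdr = \sum_{i\in\nulls}\EE{F_i(\alpha\kh^{(i)}/m)/\kh^{(i)}}$ are correct, and the sandwich $\kh\leq\kh^{(i)}\leq\kh^*$ (with $\kh^*$ matching the paper's $\kh_+$) is fine. But from that point on the plan has a genuine gap: the ``finite-dimensional extremal problem'' you allude to is never specified, and nothing in the decomposition by the value of $\kh^{(i)}$, or in the near-miss observation $F(\alpha k/m)=k-1$, forces the optimum to be a constant independent of $m$, let alone $1.93$. The difficulty is that the $\kh^{(i)}$'s are all different and correlated, and the leave-one-out route gives you no mechanism for aggregating $\sum_{i\in\nulls}F_i(\cdot)$ at a \emph{single} random threshold while retaining the correct denominator. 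Your first-pass bound $\alpha\,\EE{\kh^*/(\kh\vee 1)}$ is not $O(1)$ in general, and the telescoping idea does not by itself decouple the dependence among the events $\{\kh^{(i)}=k\}$.

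The paper takes a structurally different route that supplies exactly the missing decoupling. It conditions on the non-null p-values, writes the FDP as $\sum_{i=1}^{|\nulls|}\frac{i}{k_i}\,\bar q_i\prod_{j>i}(1-\bar q_j)$ where $I$ is the number of rejected nulls and $\bar q_i=\PPst{I=i}{I\leq i}$, and then uses an FKG-type positive-association lemma (Lemma~\ref{lem:increasing_sets}) to bound each $\bar q_i$ by the \emph{unconditional} probability $\PP{\mathcal{E}_i}$. That step is the crux: it reduces the problem to maximising $\sum_i\frac{i}{t_i}\bar q_i\prod_{j>i}(1-\bar q_j)$ subject to $\bar q_i\leq B_i(t_i)$, where $B_i$ is a sup over tails of independent Bernoulli sums. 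Hoeffding's Poisson comparison then bounds $B_i$ by a Poisson tail, and a recursive sequence $c_\ell$ (Lemma~\ref{lem:t_q_sum}) shows the supremum is at most $1.9227$. None of these ingredients---the FKG decoupling, the Poisson comparison, the recursive constants---appear in your sketch, and without an analogue of the FKG step the leave-one-out decomposition does not obviously collapse to a tractable extremal problem.
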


In comparison, the only previously known finite-sample bound on FDR for compound p-values is shown in~\eqref{eqn:armstrong_FDR_bound}, which bounds FDR by $\alpha\mathsf{h}_m$, since it does not make use of the independence assumption. We can see that the independence assumption allows us to avoid the possibility of a $\mathcal{O}(\log m)$ inflation of the FDR.

Of course, we might ask whether the bound $1.93\alpha$ can be improved. 
We now see that the fact that the FDR bound is higher than the target level $\alpha$ is sometimes unavoidable: in fact, the increase in FDR can be proportional to $\alpha$.
\begin{proposition}[Lower bound for independent compound p-values]\label{prop:fdr_example}
    Fix any $\alpha\in[0,1]$ such that $1/(2\alpha)$ is an integer, and fix any integer $m\geq 3/(2\alpha)$. Then there exists a subset $\nulls\subseteq[m]$ and a distribution for independent compound p-values $p_1,\ldots,p_m$, such that the BH procedure at level~$\alpha$ satisfies
    \[\fdr  \geq \tfrac{7}{6}\alpha.\]
\end{proposition}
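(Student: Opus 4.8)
The plan is to construct a configuration in which only a handful of hypotheses behave non-trivially, so that the BH threshold is determined by a simple random variable whose distribution we can control exactly. Let $n := 1/(2\alpha)$, which is a positive integer by hypothesis, and note $\alpha = 1/(2n)$; the condition $m \geq 3/(2\alpha) = 3n$ ensures there is enough room. The idea is to let all nulls be true, with $|\nulls| = m$ (no non-nulls), and to split the nulls into two groups: a large group of ``silent'' nulls whose p-values are deterministically equal to $1$ (so they are never rejected and never affect $\kh$), and a small active group of size $N$ (with $N$ on the order of $n$) that carries a carefully designed joint-but-independent distribution. Concretely, I would have each active p-value be supported on a tiny value (something like $\le \alpha/m$, small enough that if an active p-value is ``small'' it will be below the relevant BH threshold $\alpha\kh/m$) and on $1$. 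Because the $p_i$ are only required to be \emph{compound} p-values, the budget constraint is $\sum_{i\in\nulls}\PP{p_i\le t}\le mt$: the silent nulls contribute $0$ to the left-hand side for $t<1$, so the entire budget $mt$ is available to the $N$ active nulls. This lets each active null put probability mass on a small value far exceeding what a genuine p-value could — this is precisely the slack that drives FDR above $\alpha$.

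Next I would compute the FDR for this design. With all nulls true, $\fdp(\Sbh) = 1$ whenever $\kh \ge 1$ and $0$ otherwise, so $\fdr = \PP{\kh \ge 1} = \PP{\text{at least one rejection}}$. The BH rule makes at least one rejection iff $\sum_i \One{p_i \le \alpha k/m} \ge k$ for some $k\in[m]$; since only the active nulls can ever fall below a threshold $<1$, and I will arrange their small values to lie below $\alpha N/m$, the event ``$\kh\ge 1$'' reduces to an event about $Z := \#\{\text{active nulls with small p-value}\}$, roughly $\{Z \ge 1\}$ refined by the self-consistency check (if $Z = j$ active nulls are small and $j \le N$, then taking $k=j$ works provided the small values are $\le \alpha j/m$, which I guarantee by putting all small mass at a common point $\le \alpha/m$). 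So $\fdr = \PP{Z \ge 1}$, where $Z$ is a sum of independent Bernoulli-type indicators whose success probabilities sum to at most (active budget)$/(\text{small value})$. To make $\PP{Z\ge 1}$ as large as possible relative to $\alpha$ while respecting the per-coordinate constraint $\PP{p_i \le t}\le$ its share of $mt$, the tension is between spreading mass over many active nulls (raising $\PP{Z\ge1}$ toward $1$) versus the fact that more active nulls dilutes how much mass each can carry. I expect the optimal choice to be a \emph{small} number of active nulls — plausibly $N = 2$ or $3$ — each carrying a constant probability (like $1/2$ or $1/3$) of being small, engineered so $\PP{Z\ge1} = 1 - (1-c)^N$ equals exactly $\tfrac{7}{6}\alpha$ after plugging in $\alpha = 1/(2n)$ and the allowed masses; e.g.\ with two active nulls each small with probability tuned so that $1-(1-c)^2 = \tfrac{7}{6}\alpha$.

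The main obstacle is bookkeeping the compound-validity constraint \emph{jointly} across all $t$, not just at the single threshold of interest: I must check that $\sum_{i\in\nulls}\PP{p_i\le t}\le mt$ holds for every $t\in[0,1]$, including small $t$ just above the common small value $v$ of the active nulls. Since each active null has $\PP{p_i \le t} = c$ for all $t\in[v,1)$, the binding constraint is at $t = v^+$, giving $Nc \le mv$, i.e.\ I need $v \ge Nc/m$; and I also need $v \le \alpha/m$ for the self-consistency of the BH step at $k=N$, so I need $Nc/m \le v \le \alpha/(m) = 1/(2nm)$, forcing $Nc \le 1/(2n) = \alpha$. Thus the active nulls collectively command probability mass at most $\alpha$ at a single tiny value, and the real optimization is: maximize $1-(1-c)^N$ subject to $Nc \le \alpha$ and $N \ge 1$ integer. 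Since $1-(1-c)^N \le Nc \le \alpha$ is the naive bound, getting $\tfrac{7}{6}\alpha > \alpha$ seems impossible this way — so the construction must be subtler, likely allowing the ``small'' value to itself be a threshold, or allowing $\kh$ to take a value larger than the number of small active nulls by having some active nulls at an intermediate level, or by not making $|\nulls|=m$ but instead $|\nulls|$ slightly less than $m$ so that a guaranteed ``free'' rejection from a non-null boosts $\kh$ and hence the threshold for the active nulls. The cleanest route, which I would pursue, is: include one non-null with $p$-value deterministically tiny (so it is always rejected, contributing $1$ to every count and raising the operative threshold), set $|\nulls| = m-1$ or exploit the $|\nulls|/m$ factor, and let two active \emph{true} nulls each be small with probability $\approx \alpha$ at a value $\le 2\alpha/m$ (affordable now because the threshold has been pushed up to $2\alpha/m$ by the free rejection, doubling each active null's allowed mass to $\approx 2\alpha$); then $\fdr \ge \PP{\text{a true null is small}} - (\text{correction for the one guaranteed false-discovery-free event})$, and a short computation with $\alpha = 1/(2n)$ yields the claimed $\tfrac76\alpha$. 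Verifying compound-validity for all $t$ in this final configuration — one tiny non-null, two active true nulls at a common value, the rest of the true nulls at $1$ — is the one genuinely delicate check, but it reduces to a single inequality at the common small value.
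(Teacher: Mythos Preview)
Your proposal contains a genuine gap. You correctly diagnose that the global-null attempt cannot exceed $\alpha$, but your fallback --- one deterministic non-null together with two active true nulls, each taking the value $2\alpha/m$ with probability $\approx\alpha$ --- also fails. Carrying out your ``short computation'': conditional on $j\in\{0,1,2\}$ of the active nulls being small, one gets $\kh=j+1$ and $\fdp=j/(j+1)$, so
\[
\fdr \;=\; (1-\alpha)^2\cdot 0 \;+\; 2\alpha(1-\alpha)\cdot\tfrac12 \;+\; \alpha^2\cdot\tfrac23 \;=\; \alpha - \tfrac13\alpha^2 \;<\; \alpha.
\]
The non-null that ``raises the operative threshold'' also sits in the denominator of $\fdp$, and this dilution more than cancels the gain. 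More generally, with a \emph{single} random active null at value $w$ with probability $c$ and any arrangement of non-nulls, compound validity forces $c\le mw$ while rejection of that null forces $\kh\ge mw/\alpha$, so its contribution to FDR is at most $c\cdot\alpha/(mw)\le\alpha$; adding a second identically distributed active null, as you suggest, only makes things worse.

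The paper's construction is structurally different and supplies the two ingredients you are missing. It sets $k=1/(2\alpha)$ and uses $3k-2$ deterministic non-nulls placed at \emph{two} levels: $2k-1$ of them at $1/m$ and $k-1$ at $1.5/m$. Two special true nulls sit at these same levels: $p_2=1/m$ deterministically (this single null already saturates the compound budget at $t=1/m$) and $p_1\in\{1.5/m,\,1\}$ each with probability $1/2$ (using the incremental budget of $0.5$ available between $t=1/m$ and $t=1.5/m$). The BH cutoff then toggles: when $p_1=1.5/m$ one gets $\kh=3k$ with two false rejections, so $\fdp=2/(3k)=\tfrac43\alpha$; when $p_1=1$ one gets $\kh=2k$ with one false rejection, so $\fdp=1/(2k)=\alpha$. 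Averaging gives exactly $\tfrac76\alpha$. The key ideas you did not reach are (i) using $\Theta(1/\alpha)$ non-nulls so that each false rejection contributes $\Theta(\alpha)$ to $\fdp$ rather than $\Theta(1)$, and (ii) staggering both the non-nulls and the special nulls across two thresholds so that the random null toggles $\kh$ between two regimes whose $\fdp$ values average strictly above $\alpha$.
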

It was previously known that a bound of the form $\fdr\leq \alpha$ is impossible for independent compound p-values: indeed, \citet[Section 3.1]{armstrong2022false} gives a counterexample with independent compound p-values for which $\fdr \geq \alpha + \frac{\alpha^2}{4m^2}$, which is a positive (although vanishing) inflation of the FDR above the target level $\alpha$. Our new result strengthens this finding, by verifying that a constant factor inflation is unavoidable, 
in the worst case.\footnote{See Appendix~\ref{app:extensions} for a result showing that a tighter upper bound on FDR becomes possible with an additional assumption on the marginal distributions of the $p_i$'s.}

Theorem~\ref{thm:fdr_independent} can also be extended to the setting of approximate compound p-values.
\begin{theorem}[Upper bound for independent approximate compound p-values]\label{thm:fdr_approx}
    Fix any $\alpha\in[0,1]$, any $m \geq 1$, and any $\nulls\subseteq[m]$. Let $p_1,\dots,p_m\in[0,1]$ be independent $(\eps,\delta)$-approximate compound p-values, where $\eps,\delta\geq 0$. If $\delta=0$, then the BH procedure at level $\alpha$ satisfies
    \[\fdr\leq 1.93\alpha(1+\eps).\]
    If instead $\delta>0$, then a similar bound holds for a modified version of the FDR:
    \[\EE{\frac{|\Sbh\cap\nulls|}{\frac{m\delta}{\alpha(1+\eps)} + |\Sbh|}} \leq 1.93\alpha(1+\eps).\]
\end{theorem}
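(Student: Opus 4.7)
The plan is a two-stage reduction. First, I would rescale to absorb the $(1+\epsilon)$ factor and land in the $(0,\delta)$-approximate regime; second, I would either invoke Theorem~\ref{thm:fdr_independent} directly (when $\delta=0$) or re-run its leave-one-out analysis with denominators shifted by $a:=m\delta/c$ (when $\delta>0$), where $c:=\alpha(1+\epsilon)$.

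For the rescaling, set $q_i:=\min\{1,(1+\epsilon)p_i\}$. Independence is preserved, and the $q_i$'s are $(0,\delta)$-approximate compound p-values since
\[
\sum_{i\in\nulls}\PP{q_i\le t}
=\sum_{i\in\nulls}\PP{p_i\le t/(1+\epsilon)}
\le m\bigl(t+\delta\bigr),
\]
by applying the $(\epsilon,\delta)$-approximate condition for $(p_i)$ at threshold $t/(1+\epsilon)$. Provided $c\le 1$---the case $c>1$ is trivial because $|\Sbh\cap\nulls|/(a+|\Sbh|)\le 1\le 1.93c$---the events $\{p_i\le\alpha k/m\}$ and $\{q_i\le ck/m\}$ coincide for every $k\in[m]$, so BH at level $\alpha$ applied to $(p_i)$ returns the same $\kh$ and $\Sbh$ as BH at level $c$ applied to $(q_i)$. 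It therefore suffices to prove both conclusions with $(p_i,\alpha)$ replaced by $(q_i,c)$.

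When $\delta=0$, the $q_i$'s are compound p-values in the sense of~\eqref{eqn:compound_valid_pvalues}, and Theorem~\ref{thm:fdr_independent} applied at level $c$ immediately yields $\fdr\le 1.93c=1.93\alpha(1+\epsilon)$. When $\delta>0$, let $\kh^{(i)}$ denote the BH count obtained when $q_i$ is set to $0$; then $\kh^{(i)}$ is independent of $q_i$ under independence, and $\kh^{(i)}=\kh$ on $\{i\in\Sbh\}$, yielding
\[
\EE{\frac{|\Sbh\cap\nulls|}{a+|\Sbh|}}
=\sum_{i\in\nulls}\EE{\frac{\PPst{q_i\le c\kh^{(i)}/m}{\kh^{(i)}}}{a+\kh^{(i)}}}.
\]
The $(0,\delta)$-approximate condition gives $\sum_{i\in\nulls}\PP{q_i\le ck/m}\le ck+m\delta=c(k+a)$ for each integer $k$, so summing the displayed numerators across $i$ at the value $k=\kh^{(i)}$ yields exactly $c$ times the shifted denominator $a+\kh^{(i)}$. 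This is the structural analogue of the compound-validity bound used in the proof of Theorem~\ref{thm:fdr_independent}, with $\kh^{(i)}$ replaced throughout by $\kh^{(i)}+a$. Because $a$ is a deterministic additive shift applied uniformly in $i$ and across sample paths, I would argue that every step of the Theorem~\ref{thm:fdr_independent} argument goes through verbatim, still delivering the constant $1.93$.

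The main obstacle is precisely this last step. The constant $1.93$ in Theorem~\ref{thm:fdr_independent} presumably emerges from a non-trivial optimisation---a coupling or convexity calculation taming the variability of the $\kh^{(i)}$'s across true nulls---and one has to trace that argument line by line to confirm that no intermediate inequality implicitly exploits $\delta=0$. The symmetry between the extra $m\delta$ term in the approximate-compound bound and the extra $a=m\delta/c$ in the denominator strongly suggests that the extension is mechanical, but this verification is the only genuinely new work in the proof.
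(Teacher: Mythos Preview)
Your rescaling step and the $\delta=0$ case are correct; the paper in fact works directly with $(\epsilon,\delta)$-approximate p-values without rescaling, but your reduction is a clean alternative. The gap is in your model of how Theorem~\ref{thm:fdr_independent} is proved. The paper does \emph{not} use the leave-one-out quantities $\kh^{(i)}$ for that theorem; a leave-one-out argument appears only in the proof of Theorem~\ref{thm:fdr_with_gamma}, where it yields $\alpha/(1-\gamma)$ rather than $1.93\alpha$. The obstruction is exactly the one you flag in your final paragraph: the thresholds $\kh^{(i)}$ vary across null indices $i\in\nulls$, so the compound bound $\sum_{i\in\nulls}\PP{q_i\le t}\le m(t+\delta)$, which requires a single common $t$, cannot be applied. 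Your sentence ``summing the displayed numerators across $i$ at the value $k=\kh^{(i)}$'' is therefore not a valid step, and there is no evident repair within the leave-one-out framework that recovers the constant $1.93$.

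The actual proof of Theorem~\ref{thm:fdr_independent} conditions on the non-null p-values $(p_j)_{j\notin\nulls}$ and decomposes not by null index but by the \emph{value} of $I:=|\Sbh\cap\nulls|$. For each possible value $i$ of $I$, a threshold $k_i$ is defined purely in terms of the non-nulls (Lemma~\ref{lem:BH_condition_on_nonnulls}); an FKG-type inequality (Lemma~\ref{lem:increasing_sets}) then bounds $\bar q_i:=\PPst{I=i}{I\le i;\,(p_j)_{j\notin\nulls}}$ by $B_i(\alpha k_i)$, where $B_i(t)$ is a worst-case Poisson--binomial tail with total mean at most $t$; and the constant $1.93$ emerges from an explicit optimization (Lemma~\ref{lem:t_q_sum}) over all sequences $(t_i,\bar q_i)$ with $t_i>0$ and $0\le\bar q_i\le B_i(t_i)$. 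With this structure the extension to the approximate setting \emph{is} mechanical, as you intuited---but via a different mechanism than you propose: the $(\epsilon,\delta)$-condition gives $\bar q_i\le B_i\bigl(\alpha k_i(1+\epsilon)+m\delta\bigr)$, so one simply invokes Lemma~\ref{lem:t_q_sum} with $t_i=\alpha k_i(1+\epsilon)+m\delta$ in place of $t_i=\alpha k_i$, and the shifted denominator $\frac{m\delta}{\alpha(1+\epsilon)}+|\Sbh|$ falls out when the ratio $i/t_i$ is unpacked.
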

In this last bound, we note that the quantity in the expected value is a modified version of the FDP: recalling that $\fdp(\Sbh)=\frac{|\Sbh\cap\nulls|}{1\vee|\Sbh|}$, we see that (for small $\delta$) the modification is small as long as the number of discoveries, $|\Sbh|$, is large.

\subsection{Results under independence, and under the global null}
For independent compound p-values, the results above show that the FDR of the BH procedure can be bounded by a constant-factor inflation (Theorem~\ref{thm:fdr_independent}), and that a constant-factor inflation is unavoidable (Proposition~\ref{prop:fdr_example}).

Interestingly, though, we will now see that the situation improves further under the global null, where $\nulls=[m]$---that is, all $m$ null hypotheses are true. 
In this setting, the false discovery rate is equal to the probability of making any discoveries, since $\fdp(\Sbh)=\frac{|\Sbh\cap\nulls|}{1\vee|\Sbh|} = \One{\Sbh\neq\varnothing}$ (that is, the FDR is equal to the family-wise error rate,
$\mathsf{FWER} = \PP{|\Sbh\cap\nulls|>0}$, since any discovery would be a false discovery). For this setting, we will now see that the inflation in the FDR can be controlled with a tighter bound: in the worst-case, the excess FDR (i.e., the amount by which the FDR can exceed the target level $\alpha$) is $\mathcal{O}(\alpha^2)$, rather than $\mathcal{O}(\alpha)$ as in Theorem~\ref{thm:fdr_independent} above.

\begin{theorem}[Upper bound for independent compound p-values, under global null]\label{thm:fdr_globalnull}
    Fix any $\alpha\in[0,1]$ and any $m\geq 1$, and let $\nulls=[m]$. Let $p_1,\dots,p_m\in[0,1]$ be independent compound p-values. Then the BH procedure at level $\alpha$ satisfies
    \[\mathsf{FWER} = \fdr \leq \alpha + 2\alpha^2.\]
\end{theorem}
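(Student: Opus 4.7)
Under the global null, $\fdp(\Sbh)=\One{\Sbh\neq\varnothing}$, so $\fdr=\PP{\kh\geq 1}$. The strategy is to split this probability according to whether the smallest p-value drops below $\alpha/m$:
\begin{equation*}
\PP{\kh\geq 1} \leq \PP{\min_i p_i\leq \alpha/m} + \PP{\kh\geq 2},
\end{equation*}
the inequality holding because on $\{\min_i p_i>\alpha/m\}$ the count $\sum_i\One{p_i\leq \alpha/m}$ equals $0$, which forces any BH discovery to have $\kh\geq 2$. The first term is immediately controlled by a union bound combined with the compound condition at $t=\alpha/m$:
\begin{equation*}
\PP{\min_i p_i\leq \alpha/m}\leq \sum_{i=1}^m \PP{p_i\leq \alpha/m}\leq m\cdot\tfrac{\alpha}{m}=\alpha,
\end{equation*}
accounting for the leading-order term. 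It then remains to show $\PP{\kh\geq 2}\leq 2\alpha^2$.

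\textbf{Bounding $\PP{\kh\geq 2}$.} For this I would run a second-moment-style argument. Since $\One{\kh\geq 2}\leq \binom{\kh}{2}$,
\begin{equation*}
\PP{\kh\geq 2}\leq \tfrac12\sum_{i\neq j}\PP{\{i,j\}\subseteq \Sbh}.
\end{equation*}
Each summand can be evaluated by a \emph{pairwise} leave-one-out identity in the spirit of the classical BH proof: letting $\kh^{(i,j)}$ denote the BH output when both $p_i$ and $p_j$ are replaced by $0$, one checks that $\kh=\kh^{(i,j)}$ on the event $\{i,j\in \Sbh\}$, because replacing already-rejected p-values by $0$ leaves $N_k$ unchanged for every $k\geq \kh$ (while trivially $\kh^{(i,j)}\geq \kh$). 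Writing $q_i(k):=\PP{p_i\leq \alpha k/m}$ and invoking independence of $(p_i,p_j)$ from $(p_\ell)_{\ell\neq i,j}$, this yields
\begin{equation*}
\PP{\{i,j\}\subseteq \Sbh} = \sum_{k\geq 2} q_i(k)\,q_j(k)\,\PP{\kh^{(i,j)}=k}.
\end{equation*}

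\textbf{Main obstacle.} The compound condition supplies $\sum_{i\neq j}q_i(k)q_j(k)\leq \bigl(\sum_i q_i(k)\bigr)^2\leq (\alpha k)^2$, but pairing this bound naively with the identity $\sum_k k^2\,\PP{\kh^{(i,j)}=k} = \EE{(\kh^{(i,j)})^2}$ is catastrophic, since $\EE{(\kh^{(i,j)})^2}$ can be of order $m^2$ without further input. The key trade-off to exploit is that the same compound budget that permits $q_i(k)q_j(k)$ to be large for a given pair $(i,j)$ is precisely what constrains the distribution of $\kh^{(i,j)}$: after replacing two p-values by $0$, the remaining $m-2$ p-values satisfy the tightened condition $\sum_{\ell\neq i,j} Q_\ell(t)\leq mt - Q_i(t) - Q_j(t)$. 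Quantifying this trade-off --- I expect via induction on $m$, or equivalently by an Abel-summation argument against the residual compound budget so that the $k$-sum contributes only a constant factor --- is the technical heart of the proof, and is what ultimately produces the clean constant $2$ in the $2\alpha^2$ term.
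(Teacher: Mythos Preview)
Your opening split is sound and matches the paper: with $\mathcal{E}_1=\{\min_i p_i\le \alpha/m\}$, the event $\{\Sbh\neq\varnothing\}$ decomposes as $\mathcal{E}_1\cup\{\kh\ge 2\}$, and $\PP{\mathcal{E}_1}\le\alpha$ by the compound condition. The gap is in the second term. You explicitly flag the obstacle in your second-moment route and leave it unresolved; the appeals to ``induction on $m$'' or ``Abel summation'' are not an argument. More to the point, your plan cannot deliver the constant $2$: the quantity $\tfrac12\sum_{i\neq j}\PP{\{i,j\}\subseteq\Sbh}=\tfrac12\,\EE{\kh(\kh-1)}$ is \emph{not} bounded by $2\alpha^2$ in general. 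Take $m\ge r=10$, $\alpha=0.3$, and let $p_1,\dots,p_r$ be independent with $\PP{p_i=3\alpha/m}=3\alpha/r$ (else $p_i=1$), and $p_{r+1}=\cdots=p_m=1$. These are compound p-values; here $\kh=J\cdot\One{J\ge 3}$ with $J\sim\mathrm{Bin}(r,3\alpha/r)$, and a direct computation gives $\tfrac12\,\EE{\kh(\kh-1)}\approx 0.193>0.18=2\alpha^2$. So even if you could close the leave-two-out calculation, the modular target ``$\PP{\kh\ge 2}\le 2\alpha^2$ via the second moment'' fails. Any salvage would have to exploit that in such constructions $\PP{\mathcal{E}_1}$ is far below $\alpha$ --- i.e., you cannot decouple the two terms.

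The paper's argument avoids second moments entirely and, crucially, does \emph{not} discard the intersection with $\mathcal{E}_1^c$. It writes $\fdr=\PP{\mathcal{E}_1}+\PP{\mathcal{E}_1^c\cap(\mathcal{E}_2\cup\cdots\cup\mathcal{E}_m)}\le \alpha+\sum_{i\ge 2}\PP{\mathcal{E}_1^c\cap\mathcal{E}_i}$, where $\mathcal{E}_i=\{\sum_j\One{p_j\le \alpha i/m}\ge i\}$. On $\mathcal{E}_1^c\cap\mathcal{E}_i$, at least $i$ of the $p_j$'s must lie in the half-open interval $(\alpha/m,\alpha i/m]$ (in the appropriate sense after coupling to uniforms), so the relevant independent Bernoulli sum has total mean at most $\alpha i-\alpha=\alpha(i-1)$. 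Since $\alpha(i-1)+1\le i$, Hoeffding's Poisson comparison yields $\PP{\mathcal{E}_1^c\cap\mathcal{E}_i}\le\PP{\mathrm{Pois}(\alpha(i-1))\ge i}$, and an explicit identity $\sum_{k\ge 1}\PP{\mathrm{Pois}(\alpha k)>k}=\tfrac{\alpha^2/2}{(1-\alpha)^2}$ then sums the tail to at most $2\alpha^2$ for $\alpha\le 1/2$ (the theorem is vacuous for $\alpha>1/2$). The reduction of the mean from $\alpha i$ to $\alpha(i-1)$, made possible only by retaining $\mathcal{E}_1^c$, is exactly the piece your decomposition throws away.
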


Next, we verify that this $\mathcal{O}(\alpha^2)$ inflation of the bound on FDR is unavoidable---up to a constant, this is optimal for the setting of Theorem~\ref{thm:fdr_globalnull}.

\begin{proposition}[Lower bound for independent compound p-values, under global null]\label{prop:fdr_example_globalnull}
    Fix any $\alpha\in[0,\frac{2}{3}]$, any integer $m\geq 2$, and let $\nulls = [m]$. Then there exists a distribution for independent compound p-values $p_1,\dots,p_m$, such that the BH procedure at level $\alpha$ satisfies
    \[\mathsf{FWER} = \fdr \geq \alpha + \tfrac{1}{4}\alpha^2.\]
\end{proposition}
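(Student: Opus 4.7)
The plan is to exhibit an explicit construction of independent compound p-values for which BH rejects with probability at least $\alpha + \alpha^2/4$. Under the global null, $\fdp(\Sbh) = \One{\Sbh \neq \varnothing}$, so $\fdr = \PP{\Sbh \neq \varnothing}$, and the task reduces to designing $p_1,\dots,p_m$ to maximize this probability. The key simplification is to concentrate all randomness in just two coordinates: set $p_3 = \cdots = p_m = 1$ deterministically, and take $p_1, p_2$ to be independent atoms supported on $\{\alpha/m,\ 2\alpha/m,\ 1\}$. Since only $p_1, p_2$ can ever be less than $1$, the BH cutoff $\kh$ belongs to $\{0,1,2\}$, and
\[\{\Sbh \neq \varnothing\} = \{p_1 \leq \alpha/m\} \cup \{p_2 \leq \alpha/m\} \cup \{p_1 \leq 2\alpha/m,\ p_2 \leq 2\alpha/m\}.\]

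Parameterize by $a_i = \PP{p_i \leq \alpha/m}$ and $b_i = \PP{p_i \leq 2\alpha/m}$ for $i \in \{1,2\}$. Because the atoms live at the breakpoints, compound validity $\sum_i \PP{p_i \leq t} \leq mt$ need only be checked at $t \in \{\alpha/m,\ 2\alpha/m,\ 1\}$, and reduces to the two linear constraints $a_1 + a_2 \leq \alpha$ and $b_1 + b_2 \leq 2\alpha$ (the $t=1$ check is immediate for $m \geq 2$). By independence and inclusion-exclusion,
\[\PP{\Sbh \neq \varnothing} = 1 - (1-a_1)(1-a_2) + (b_1 - a_1)(b_2 - a_2),\]
since the rightmost term is exactly the probability of the ``two-rejection only'' event in which both $p_i$'s land in $(\alpha/m,\ 2\alpha/m]$.

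Now maximize. Setting $a_1 = 0$ and $a_2 = \alpha$ saturates the first budget asymmetrically and makes the first summand equal to $\alpha$. It remains to maximize $b_1(b_2 - \alpha)$ subject to $b_1 + b_2 \leq 2\alpha$, $b_1 \geq 0$, $b_2 \geq \alpha$, and $b_i \leq 1$. AM--GM gives $b_1 = \alpha/2$, $b_2 = 3\alpha/2$, producing $\alpha^2/4$, and the only binding upper constraint is $b_2 \leq 1$, equivalent to $\alpha \leq 2/3$. A concrete realization: $p_1 = 2\alpha/m$ with probability $\alpha/2$ and $p_1 = 1$ otherwise; $p_2 = \alpha/m$ with probability $\alpha$, $p_2 = 2\alpha/m$ with probability $\alpha/2$, and $p_2 = 1$ with probability $1 - 3\alpha/2 \geq 0$.

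The only real obstacle is guessing this asymmetric construction: making $p_1$ and $p_2$ identically distributed is suboptimal, and one must realize that $p_2$ should do the heavy lifting at threshold $\alpha/m$ while $p_1$ keeps budget in reserve at threshold $2\alpha/m$, so that both the one-rejection and the two-rejection events contribute nontrivially. After this, the proof is a short verification of compound validity at the three breakpoints together with the inclusion-exclusion calculation above.
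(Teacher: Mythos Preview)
Your proposal is correct and takes essentially the same approach as the paper: the paper exhibits exactly your construction (with the roles of $p_1$ and $p_2$ swapped), verifies compound validity at the two breakpoints $\alpha/m$ and $2\alpha/m$, and computes $\fdr = \PP{\Sbh\neq\varnothing} = \alpha + (\alpha/2)^2$. The only difference is presentational---you arrive at the construction via the optimization over $(a_i,b_i)$, whereas the paper simply states it and checks it.
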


\subsection{Results under positive dependence (PRDS)}\label{sec:PRDS}
Our main results thus far have all assumed independence of the values $p_1,\dots,p_m$. We now turn our attention to a more general setting, where $p = (p_1,\dots,p_m)$ are assumed to satisfy the condition of \emph{positive regression dependence on a subset} (PRDS), introduced by \citet{benjamini2001control}:\footnote{In fact, the PRDS condition was originally defined in a stronger sense: the function $t\mapsto \PPst{p\in A}{p_i= t}$ is nondecreasing in $t$ (that is, we condition on $p_i=t$ rather than $p_i\leq t$), but the version given in~\eqref{eqn:PRDS} suffices for verifying FDR control \citep[Section~4]{benjamini2001control}.}
\begin{multline}\label{eqn:PRDS}
\textnormal{For any increasing set $A\subseteq[0,1]^m$, and every $i\in\nulls$,}\\\textnormal{the function $t\mapsto \PPst{p\in A}{p_i\leq t}$ is nondecreasing in $t$.}
\end{multline}
(A set $A\subseteq[0,1]^m$ is said to be \emph{increasing} if, whenever $x \in A$ and $y \in [0,1]^m$ satisfy $x \leq y$ coordinatewise, we have $y\in A$.) Informally, this condition requires that a higher value of $p_i$ should not be correlated with lower values of the other $p_j$'s.

As mentioned in Section~\ref{sec:background_bh} (see~\eqref{eqn:bh_prds}), if $p_1,\dots,p_m$ are p-values and satisfy the PRDS condition, then the BH procedure is known to control FDR at the level $\alpha$ \citep{benjamini2001control}. 
In contrast, if instead $p_1,\dots,p_m$ are compound p-values, then the only known upper bound is given by~\eqref{eqn:armstrong_FDR_bound}, which does not make use of the restriction on the dependence. We might hope that, similarly to the setting of p-values, the PRDS condition can allow for substantially better control of the FDR, and can avoid the factor of $\mathcal{O}(\log m)$---but in fact,
our results verify that this factor cannot be avoided.
\begin{proposition}[Lower bound for PRDS compound p-values]\label{prop:fdr_example_prds}
    Fix any $\alpha\in[0,1]$, any integer $m\geq 1$ and let $\nulls=[m]$. Then there exists a distribution on $p=(p_1,\dots,p_m)\in[0,1]^m$, where $p_1,\dots,p_m$ satisfy the PRDS condition~\eqref{eqn:PRDS} and are compound p-values, such that the BH procedure at level $\alpha$ satisfies
    \[\fdr\geq \tfrac{3}{8} \min\{\alpha \mathsf{h}_m,1\}.\]
\end{proposition}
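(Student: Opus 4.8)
The plan is to build, for each $m$, a joint law on $(p_1,\dots,p_m)$ out of a small number of independent ``blocks'', each block being a group of coordinates that are switched on or off together by a private common factor; the point of using independent block factors rather than one global latent is that if $(p_1,\dots,p_m)$ is a coordinatewise monotone function of a single latent variable then $\kh$ is monotone in that latent and $\fdr\le\alpha$, whereas $\Theta(\log m)$ near-disjoint independent ``rejection events'' can stack up to give $\fdr\asymp\alpha\mathsf h_m$.

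Concretely, set $L:=\lfloor\log_2(m+1)\rfloor$ and $n_\ell:=2^{\ell-1}$ for $\ell\in[L]$, so that $\sum_{\ell}n_\ell=2^L-1\le m$; partition a $(2^L-1)$-element subset of $[m]$ into blocks $B_1,\dots,B_L$ with $|B_\ell|=n_\ell$ and call the remaining coordinates ``fillers''. Take independent $Z_\ell\sim\mathrm{Bernoulli}(a_\ell)$ with $a_1:=\alpha$ and $a_\ell:=\alpha/2$ for $\ell\ge 2$, put $v_\ell:=\alpha n_\ell/m$, and define $p_j:=v_\ell$ if $j\in B_\ell$ and $Z_\ell=1$, $p_j:=1$ if $j\in B_\ell$ and $Z_\ell=0$, and $p_j:=1$ for filler $j$. (If $\alpha=1$ the claim is trivial, since then BH always rejects everything; so assume $\alpha<1$.)

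I would then check the three required properties. For \emph{compound validity}: a coordinate $j\in B_\ell$ has $\PP{p_j\le t}=a_\ell\One{v_\ell\le t}$ for $t<1$, so $\sum_j\PP{p_j\le t}=\sum_{\ell:\,v_\ell\le t}n_\ell a_\ell$; the chosen $a_\ell$ give $\sum_{\ell'\le k}n_{\ell'}a_{\ell'}=\alpha n_k$, hence this sum equals $mv_k\le mt$ on $[v_k,v_{k+1})$ and equals $m$ at $t=1$, so \eqref{eqn:compound_valid_pvalues} holds. For \emph{PRDS}: for $i\in B_\ell$ the event $\{p_i\le t\}$, when it has positive probability, is either all of the space ($t\ge1$) or exactly $\{Z_\ell=1\}$ ($v_\ell\le t<1$); and conditioning on $\{Z_\ell=1\}$ only forces the $B_\ell$-coordinates down to $v_\ell$ while leaving the law of every other coordinate unchanged, so the conditional law of $p$ given $\{p_i\le t\}$ is stochastically nondecreasing in $t$, which for increasing $A$ is exactly \eqref{eqn:PRDS} (for a filler $i$ the conditioning is vacuous). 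For \emph{FDR}: if some $Z_\ell=1$ then $n_\ell$ coordinates equal $v_\ell=\alpha n_\ell/m$, so $p_{(n_\ell)}\le\alpha n_\ell/m$ and $\kh\ge n_\ell\ge1$, whereas if all $Z_\ell=0$ then every $p_j=1>\alpha k/m$ and $\kh=0$; since under the global null $\fdp(\Sbh)=\One{\Sbh\ne\varnothing}$, this gives $\fdr=\PP{\exists\,\ell:\,Z_\ell=1}=1-\prod_{\ell=1}^L(1-a_\ell)=1-(1-\alpha)(1-\alpha/2)^{L-1}$.

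It remains to lower bound this by $\tfrac38\min\{\alpha\mathsf h_m,1\}$. Two regimes do most of the work: if $\alpha\ge\tfrac38$ then $\fdr\ge a_1=\alpha\ge\tfrac38\ge\tfrac38\min\{\alpha\mathsf h_m,1\}$; and since $L-1\ge\log_2(m+1)-2$ while $\mathsf h_m\le1+\ln m$, we get $L+1\ge(\mathsf h_m-1)/\ln2$, so (using $1-x\le e^{-x}$ and $1-\alpha\le(1-\alpha/2)^2$) $\fdr\ge1-(1-\alpha/2)^{L+1}\ge 1-e^{-\alpha(\mathsf h_m-1)/(2\ln2)}$, and a short calculation shows this exceeds $\tfrac38\min\{\alpha\mathsf h_m,1\}$ for $\alpha<\tfrac38$, with the finitely many borderline small values of $m$ verified directly. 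I expect the PRDS verification to be the genuinely delicate step: one is tempted to positively modify the classical arbitrary-dependence (Guo-type) construction, but there a ``later-activating'' null coordinate jumps \emph{down} from $1$ to a small value, which breaks positive dependence, so the disjoint-block / independent-factor structure (or an equivalent source of several near-disjoint rejection events) is essential; a secondary nuisance is extracting the exact constant $\tfrac38$, which is what forces the small-$m$ bookkeeping (or a slightly more careful choice of the $n_\ell$).
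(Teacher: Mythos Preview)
Your construction is correct and is a genuine variant of the paper's. Both proofs partition (most of) $[m]$ into blocks, make the coordinates inside a block equal, and drive each block by an independent Bernoulli, so that PRDS is immediate and $\fdr=\PP{\exists\,\ell:Z_\ell=1}$. The difference is in the block sizes: the paper takes $|B_\ell|=\ell$ for $\ell=1,\dots,L$ with $L\approx\sqrt{2m}$ and activation probabilities $\alpha'/\ell$ (with $\alpha'=\min\{\alpha,1/\mathsf h_m\}$), so that $\sum_\ell \alpha'/\ell=\alpha'\mathsf h_L$ and the key fact is $\mathsf h_L\ge \mathsf h_m/2$. You instead take $|B_\ell|=2^{\ell-1}$ for $\ell=1,\dots,L$ with $L=\lfloor\log_2(m+1)\rfloor$ and constant activation probabilities $\alpha$ or $\alpha/2$. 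Your geometric blocks make the compound-validity check a one-line telescoping identity, whereas the paper's linear blocks make the harmonic sum appear directly; both give $\fdr\ge 1-\exp\{-c\,\alpha\,\mathsf h_m\}$ for a suitable constant.

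One small gap: your final ``short calculation'' does not quite close as written. From $L+1\ge(\mathsf h_m-1)/\ln 2$ you get $\fdr\ge 1-\exp\{-(\alpha\mathsf h_m-\alpha)/(2\ln 2)\}$, and for $\alpha$ just below $3/8$ with $\alpha\mathsf h_m$ just above $1$ this falls slightly short of $3/8$, and this happens for a narrow band of $\alpha$ at \emph{every} sufficiently large $m$, not only for finitely many small $m$. The clean fix is to use the sharper comparison $L+1>\log_2(m+1)\ge \mathsf h_m$ (the second inequality holds for all $m\ge1$ by a short induction, since $\log_2(1+\tfrac{1}{m+1})\ge \tfrac{1}{m+1}$ for $m\ge1$). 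Then $\fdr\ge 1-e^{-\alpha\mathsf h_m/2}$, and since $y\mapsto 1-e^{-y/2}$ is concave with value $1-e^{-1/2}>3/8$ at $y=1$, you get $1-e^{-y/2}\ge\tfrac38\min\{y,1\}$ for all $y\ge0$, which is exactly the claim; no small-$m$ casework is needed.
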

\noindent In particular, this shows that \citet{armstrong2022false}'s upper bound~\eqref{eqn:armstrong_FDR_bound} on FDR for compound p-values is, up to a constant, optimal even under the PRDS condition.

\section{Compound p-values: examples}\label{sec:examples}
In this section, in order to motivate the problem of multiple testing with compound p-values, we present several examples of settings where compound p-values might arise.  We will consider two types of examples:
\begin{itemize}
    \item Settings where there is insufficient information to compute non-trivial p-values satisfying the usual validity condition, but compound p-values are nonetheless possible to construct.
    \item Settings where we do have sufficient information to construct p-values, but we instead choose to use compound p-values to enable a more powerful analysis---for example, because the only available p-values are too coarse, too noisy, or too conservative to be informative.
\end{itemize}
In the specific settings described below, Examples~\ref{example:empirical_Bayes} and~\ref{example:decreasing_densities} are of the first type, while Examples~\ref{example:permutation_tests} and~\ref{example:MC_pvalues} are of the second type. Additional examples are provided in Appendix~\ref{app:additional_examples}.
Proofs of all results stated in this section are presented in Appendix~\ref{app:proofs_examples}.

\newcounter{examplesettings}
\renewcommand\theexamplesettings{(\alph{examplesettings})}

\refstepcounter{examplesettings}\label{example:empirical_Bayes}
\subsection{Example \theexamplesettings: empirical Bayes and the two groups model} 

For a first example, we will consider a classical empirical Bayes approach to hypothesis testing, namely the two-groups model; see \citet{efron2012large} for background. In this approach, we assume that each data value $X_i$ is drawn from a mixture of two densities, $f=\pi_0\cdot f_0 + (1-\pi_0)\cdot f_1$, where $f_0$ is the density of a data point drawn under the null, and $f_1$ is the density under the alternative. 

For simplicity, suppose large (and positive) values of $X_i$ indicate evidence against the null, and consider a rejection threshold $z$---that is, consider rejecting hypotheses $\{i: X_i \geq z\}$. We can then estimate the the false discovery proportion of this rejection rule as $\widehat{\fdp}(z)= \frac{\pi_0\cdot \Pp{f_0}{X\geq z}}{\frac{1}{m}\sum_{i=1}^m \One{X_i\geq z}}$ (where the denominator can be viewed as an estimate of $\bar{F}(z) = \Pp{f}{X\geq z}$, since $X_1,\dots,X_m$ are draws from the mixture density $f$).\footnote{In fact, it is well known that this approach to estimating FDP can be viewed as an interpretation of the BH procedure \citep{efron2002empirical,storey2004strong}---if we define $p_i = \bar{F}_0(X_i)$ and set $\pi_0=1$ as a conservative upper bound on the proportion of true nulls, then the set of rejections returned by the BH procedure is $\Sbh = \{i\in[m]: X_i \geq \hat{z}\}$, where $\hat{z} = \min\{ z: \widehat{\fdp}(z) \leq \alpha\}$.}

The term in the numerator, $\bar{F}_0(z)=\Pp{f_0}{X\geq z}$, is the right-tail CDF of the null density $f_0$, and may be interpreted as a p-value---more formally, if we have a null data value $X_i\sim f_0$, then $p_i = \bar{F}_0(X_i)$ is a p-value. But
\citet{efron2007size} observes that, for the estimate $\widehat{\fdp}(z)$ to be meaningful, we do not need to require ``all of the null [$X_i$'s]
to have the same density $f_0(z)$, only that their \emph{average} density behaves like $f_0$''. In other words,~$\bar{F}_0$ only needs to be a reasonable estimate of the \emph{average} of the right-tail CDFs of the null distributions. With this observation, we can now connect the empirical Bayes approach directly to compound p-values.

\begin{proposition}\label{prop:average_null_CDFs}
    Suppose we observe data $X_i\in\R$ to test hypothesis $H_{0,i}$, for each $i\in[m]$.
    Let $F_i(x) = \Pp{H_{0,i}}{X_i\leq x}$ be the CDF of the null distribution of $X_i$. If $p_1,\dots,p_m$ satisfy
    \[p_i \geq \frac{1}{m}\sum_{j\in\nulls} F_j(X_i)\textnormal{ almost surely for each $i\in\nulls$},\]
    then $p_1,\dots,p_m$ are compound p-values.
    Similarly, if $\bar{F}_i(x)=\Pp{H_{0,i}}{X_i\geq x}$ denotes the right-tailed CDF of the null distribution of $X_i$, and if $p_1,\dots,p_m$ satisfy
    \[p_i \geq \frac{1}{m}\sum_{j\in\nulls} \bar{F}_j(X_i)\textnormal{ almost surely for each $i\in\nulls$},\]
    then again $p_1,\dots,p_m$ are compound p-values.
\end{proposition}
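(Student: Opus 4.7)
\medskip

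\noindent\textbf{Proof plan.} The key observation is that the hypotheses $p_i \geq \frac{1}{m}\sum_{j\in\nulls}F_j(X_i)$ and the event $\{p_i \leq t\}$ interact monotonically, so it suffices to prove the bound when equality holds, i.e.\ for $p_i = G(X_i)$, where
\[
G(x) := \frac{1}{m}\sum_{j\in\nulls} F_j(x).
\]
Indeed, $\{p_i \leq t\} \subseteq \{G(X_i) \leq t\}$, so $\sum_{i\in\nulls}\PP{p_i \leq t}\leq \sum_{i\in\nulls}\PP{G(X_i)\leq t}$ and we may focus on bounding the right-hand side by $mt$.

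Next, since each $F_j$ is a CDF, $G$ is non-decreasing and right-continuous, so the sublevel set $\{x\in\R : G(x)\leq t\}$ is of the form $(-\infty, u_t]$ for some $u_t\in[-\infty,+\infty]$, where $u_t := \sup\{x : G(x)\leq t\}$. By right-continuity of $G$ at $u_t$ (when finite), we have $G(u_t)\leq t$. Hence for each $i\in\nulls$,
\[
\PP{G(X_i)\leq t} \;=\; \PP{X_i \leq u_t} \;=\; F_i(u_t),
\]
and summing over $i \in \nulls$ yields $\sum_{i\in\nulls}F_i(u_t) = m\,G(u_t) \leq mt$, as desired. (A minor wrinkle arises if the sublevel set is of the open form $(-\infty,u_t)$, which can happen if $G$ has a jump at $u_t$; in that case $\{G(X_i)\leq t\} = \{X_i < u_t\}$, and taking the limit $\PP{X_i < u_t} = \lim_{x\uparrow u_t}F_i(x)$ together with $\lim_{x\uparrow u_t}G(x)\leq t$ gives the same bound. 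The same caveat handles the case $u_t=\pm\infty$ trivially.)

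The right-tailed statement follows by the analogous argument with $\bar G(x):=\frac{1}{m}\sum_{j\in\nulls}\bar F_j(x)$, which is now non-\emph{increasing} and left-continuous (since $\bar F_j(x)=\PP{X_j\geq x}$ is left-continuous). The superlevel set $\{\bar G(x)\leq t\}$ is then an upper set of the form $[v_t,\infty)$ (or $(v_t,\infty)$ when $\bar G$ jumps at $v_t$), and summing $\bar F_i(v_t)$ (or its right-limit) over $i\in\nulls$ produces the bound $m\bar G(v_t)\leq mt$ (resp.\ $\lim_{x\downarrow v_t}m\bar G(x)\leq mt$).

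The argument is almost entirely bookkeeping; the only point requiring care is the handling of potential discontinuities of $G$ and $\bar G$, which is why one must distinguish closed vs.\ open level sets and use the appropriate one-sided continuity properties of the CDFs. No independence or joint distributional assumption on $(X_i)_{i\in\nulls}$ is needed, since the bound is a sum of marginal probabilities.
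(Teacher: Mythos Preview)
Your argument is correct and is essentially identical to the paper's: define $G(x)=\frac{1}{m}\sum_{j\in\nulls}F_j(x)$, set $u_t=\sup\{x:G(x)\le t\}$, and split into the two cases where the sublevel set is $(-\infty,u_t]$ versus $(-\infty,u_t)$, summing $F_i(u_t)$ (or its left limit) over $i\in\nulls$ in each case. One small slip worth cleaning up: right-continuity of $G$ does \emph{not} guarantee that $\{G\le t\}$ is closed---your own parenthetical about jumps shows the open case can occur---so the justification ``by right-continuity \dots\ $G(u_t)\le t$'' is misplaced (when the sublevel set is closed, $G(u_t)\le t$ is immediate from $u_t\in\{G\le t\}$, not from continuity); since you handle both cases anyway, the proof is unaffected.
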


\refstepcounter{examplesettings}\label{example:decreasing_densities}
\subsection{Example \theexamplesettings: decreasing densities}
Suppose we observe data values $X_1,\dots,X_m$, where each $X_i$ is independently drawn from a distribution $P_i$ on $[0,\infty)$. We will take $H_{0,i}$ to be the null hypothesis that $P_i$ has a nonincreasing density on $[0,\infty)$---under the alternative, we might instead have a density with its mode at a positive value.

Clearly, for a single data point $X_i$, we have no ability to perform inference: if we observe, say, $X_i = 1.5$, we cannot state that this is inconsistent with the null hypothesis of a nonincreasing density, without further information.  On the other hand, by sharing information across the $m$ tests, we can define compound p-values. There are many possible ways to do this---here we consider one simple possibility, but this may be far from optimal. 

Let $\mathcal{G}$ be the set of convex and nonincreasing functions $g:[0,\infty) \to [0,1]$, and for any $\Delta>0$, define
\[\mathcal{G}(\Delta)=\left\{ g\in\mathcal{G} :  \max_{i=0,\dots,m} \{g(X_{(i)}) - g(X_{(i+1)})\}\leq \Delta\right\},\]
where $X_{(1)}\leq \dots \leq X_{(m)}$ are the order statistics of the observed data values, and we also define $X_{(0)}=0$ and $X_{(m+1)} = +\infty$, and let $g(+\infty)=0$. Finally, let
\begin{equation}\label{eqn:define_p_i_decreasing_densities}p_i = \sup_{g\in\mathcal{G}(\Delta)} g(X_i)\textnormal{ where }\Delta = \frac{\log(em^2)}{m}.\end{equation}
(See Appendix~\ref{app:decreasing_density_appendix} for an illustration.)

The intuition for this construction is the following. Let $g^*(x) =\frac{1}{m}\sum_{j\in\nulls}\bar{F}_j(x)$, where for each $j\in\nulls$, $\bar{F}_j$ denotes the right-tail CDF of the distribution $P_j$. By Proposition~\ref{prop:average_null_CDFs}, then, the quantities
    \[p^*_i = g^*(X_i)\]
define compound p-values---but we cannot compute $p^*_i$ since $g^*$ is unknown (because the null distributions $P_j$ are themselves unknown). However, with high probability, the unknown function $g^*$ lies in $\mathcal{G}(\Delta)$---and on this event, we therefore have $p_i\geq p^*_i$, i.e., the values $p_i$ are more conservative than the compound p-values $p^*_i$.

The following result validates this intuition, and verifies that our proposed method yields approximately compound p-values.
\begin{proposition}\label{prop:decreasing_density}
    Let $X_1\sim P_1$, \dots, $X_m\sim P_m$ be independent, where each $P_i$ is a distribution on $[0,\infty)$. Let $H_{0,i}$ be the null hypothesis that $P_i$ has a nonincreasing density.
    Then $p_1,\dots,p_m$ (as defined above) are $(0,\frac{1}{m})$-approximate compound p-values.
\end{proposition}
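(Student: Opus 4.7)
The plan is to couple the constructed $p_i$'s to the ``ideal'' compound p-values $p_i^* := g^*(X_i)$, where $g^*(x) := \frac{1}{m}\sum_{j\in\nulls}\bar{F}_j(x)$, and to reduce the claim to a max-spacing concentration inequality. Under $H_{0,j}$, $\bar{F}_j$ is the survival function of a nonincreasing density on $[0,\infty)$, so it is convex, nonincreasing, and $[0,1]$-valued; so is $g^*$, whence $g^*\in\mathcal{G}$. On the event $A := \{g^*\in\mathcal{G}(\Delta)\}$, the function $g^*$ is feasible in the supremum defining $p_i$, so $p_i\geq p_i^*$ pointwise. Applying Proposition~\ref{prop:average_null_CDFs} to the $p_i^*$'s, together with a split on $A$ and $A^c$, gives
\[\sum_{i\in\nulls}\PP{p_i\leq t}\leq \sum_{i\in\nulls}\PP{p_i^*\leq t} + |\nulls|\cdot\PP{A^c} \leq mt + m\cdot\PP{A^c},\]
so the proposition reduces to showing $\PP{A^c}\leq 1/m$.

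For this bound I would use a witness reformulation of $A^c$. Writing $Y_i := g^*(X_i)$, $Y_{[0]}:=0$, and $Y_{[m+1]}:=g^*(0)$, the event that some spacing exceeds $\Delta$ implies the existence of some $s\in\{0, Y_1,\dots, Y_m\}$ with $s+\Delta\leq g^*(0)$ and $(s,s+\Delta]\cap\{Y_1,\dots,Y_m\}=\varnothing$ (with $s=0$ witnessing a bottom gap, and $s=Y_{[i]}$ witnessing an interior or top gap of $(Y_{[i]},Y_{[i+1]})$). This replaces the continuum union by one over $m+1$ discrete candidates. For any $y\in[0, g^*(0)-\Delta]$, continuity of the convex $g^*$ on $(0,\infty)$ and the definitions give
\[\sum_{i\in\nulls}\PP{Y_i\in(y,y+\Delta]} \;=\; m\bigl[g^*(x_{y+\Delta}) - g^*(x_y)\bigr] \;=\; m\Delta,\]
where $x_t := \inf\{x\geq 0 : g^*(x)\leq t\}$. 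Independence of the $X_i$'s and $1-p\leq e^{-p}$ then yield the single-window tail
\[\PP{(y,y+\Delta]\cap\{Y_1,\dots,Y_m\}=\varnothing}\leq e^{-m\Delta} = 1/(em^2),\]
which directly bounds the $s=0$ contribution. For each $s=Y_j$, conditioning on $Y_j=y$ and running the same argument on the other $m-1$ independent terms loses at most a factor of $e$ in the exponent (from removing one null contribution of size $\leq 1$), giving per-witness bound $\leq 1/m^2$; integrating against the law of $Y_j$ and summing over $j\in[m]$ produces at most $1/m$, so that $\PP{A^c}\leq 1/m$ as required.

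The main obstacle is obtaining the correct constant in the spacing bound. A naive grid union bound---partitioning $[0,g^*(0)]$ into cells of width $\Delta$ and demanding that each cell contains some $Y_i$---only controls the maximum spacing up to a factor of $2\Delta$, because a gap of length in $(\Delta, 2\Delta]$ need not fully contain any grid cell, so replacing $\Delta$ by $\Delta/2$ would inflate the single-window tail to $1/(m\sqrt{e})$ and break the union bound. The witness reformulation sidesteps this: it pins the left endpoint of every ``bad'' window to one of $m+1$ data-defined anchors, so the union bound pays only a factor of $\approx m$ on top of the sharp single-window tail $e^{-m\Delta}=1/(em^2)$, producing the required $1/m$ rate.
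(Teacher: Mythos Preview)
Your reduction to the max-spacing bound is exactly the paper's: define $p_i^* = g^*(X_i)$, observe that $g^*\in\mathcal{G}$ so that on the event $\{g^*\in\mathcal{G}(\Delta)\}$ we have $p_i\geq p_i^*$, apply Proposition~\ref{prop:average_null_CDFs} to the $p_i^*$'s, and reduce to showing $\mathbb{P}\{g^*\notin\mathcal{G}(\Delta)\}\leq 1/m$.

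For the spacing bound itself, your approach differs from the paper's. You anchor empty windows of width $\Delta$ at the $m+1$ points $\{0,Y_1,\dots,Y_m\}$ and union-bound. The paper instead introduces an \emph{independent} random witness $U\sim\textnormal{Unif}[0,(|\nulls|-2\log m)/m]$ and considers the event $\mathcal{E}$ that the window $(U,U+\tfrac{2\log m}{m})$---width $\tfrac{2\log m}{m}$, not $\Delta$---misses all $Y_i$. It shows $\mathbb{P}\{\mathcal{E}\}\leq 1/m^2$ unconditionally, and that conditionally on any gap exceeding $\Delta=(1+2\log m)/m$ the random $U$ lands inside that gap (with at least $\tfrac{2\log m}{m}$ of room to the right) with probability strictly greater than $1/m$; combining gives $\mathbb{P}\{\text{max gap}>\Delta\}\leq m\cdot(1/m^2)=1/m$. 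The extra $1/m$ in $\Delta$ beyond $2\log m/m$ is precisely the slack that makes this random-witness constant exact.

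Your argument, as written, does not hit the constant. The $s=0$ term contributes $e^{-m\Delta}=1/(em^2)$, and each $s=Y_j$ term contributes at most $e^{-(m\Delta-1)}=1/m^2$ after dropping the one null contribution from the exponent. Summing gives
\[
\frac{1}{em^2}+\frac{m}{m^2}=\frac{1}{m}+\frac{1}{em^2}>\frac{1}{m},
\]
not $1/m$; you appear to have dropped the $s=0$ contribution in the final tally. I do not see how to absorb this within the anchored-window framework: shrinking the window width only worsens the exponent, and anchoring the bottom gap from the right at $Y_{(1)}$ introduces a second window per data point and doubles the union-bound count. The paper's device---using a continuous random witness with window width $2\log m/m$ rather than $\Delta$---is exactly what avoids the factor-of-$e$ conditioning loss and recovers the stated $(0,\tfrac{1}{m})$ claim. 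Your argument would give $(0,\tfrac{1}{m}+\tfrac{1}{em^2})$-approximate compound p-values, which is essentially as good but not what is asserted.
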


Finally, while the definition of the $p_i$'s may initially appear challenging to compute due to the function class $\mathcal{G}(\Delta)$, we now show that in fact the supremum can be derived with a simple calculation:
\begin{proposition}\label{prop:decreasing_density_compute}
    Under the notation and assumptions of Proposition~\ref{prop:decreasing_density}, define $w_m=\Delta$ and recursively for each $i=m-1,\dots,0$, set
    \[w_i = \min\left\{\frac{1-\sum_{j=i+1}^m w_j}{ \frac{X_{(i+1)}}{X_{(i+1)} - X_{(i)}}} 
    , \frac{\Delta}{\max_{j=0,\dots,i} \frac{X_{(j+1)} - X_{(j)}}{X_{(i+1)} - X_{(i)}}}\right\},\]
    or $w_i=0$ if $X_{(i+1)}-X_{(i)}=0$.
    Then for each $i\in[m]$, 
    \[p_i = \sup_{g\in\mathcal{G}(\Delta)}g(X_{(i)})  = \sum_{j=i}^m w_j .\]
\end{proposition}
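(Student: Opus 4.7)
The plan is to recast $\sup_{g \in \mathcal{G}(\Delta)} g(X_{(i)})$ as a linear program in the knot increments and solve it by backward induction. Writing $W_j := g(X_{(j)}) - g(X_{(j+1)})$ for $j = 0, \ldots, m$ (with $X_{(m+1)} = +\infty$ and the convention $g(\infty) = 0$), the conditions defining $\mathcal{G}(\Delta)$ translate, when read off at the knots, into $W_j \geq 0$ (monotonicity), $W_j \leq \Delta$ (the decrement bound), $\sum_{j=0}^m W_j \leq 1$ (since $g(0) \leq 1$), and $W_j/(X_{(j+1)} - X_{(j)})$ nonincreasing in $j$ for $j = 0, \ldots, m-1$ (convexity of the chords). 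Conversely, any such $W$ can be realised by the piecewise linear $g$ with these increments, extended by a sufficiently gentle linear ramp to $0$ past $X_{(m)}$, so $p_i = \sup_W \sum_{j \geq i} W_j$. With this dictionary, the two candidates in the recursion correspond exactly to the two active constraints on $W_i$: the budget-plus-convexity bound $A_i = (1 - \sum_{j>i} w_j)(X_{(i+1)} - X_{(i)})/X_{(i+1)}$, and the decrement-plus-convexity bound $B_i = \Delta(X_{(i+1)} - X_{(i)})/\max_{k \leq i}(X_{(k+1)} - X_{(k)})$.

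To establish feasibility of the recursive $w$, the nonnegativity and $w_j \leq \Delta$ conditions are immediate from $w_j \leq B_j \leq \Delta$. For $\sum_j w_j \leq 1$, writing $S_j := \sum_{k \geq j} w_k$ and using $w_j \leq A_j$ gives the recurrence $1 - S_j \geq (1 - S_{j+1}) X_{(j)}/X_{(j+1)}$; iterating to $j = 0$ and using $X_{(0)} = 0$ yields $S_0 \leq 1$. For the convexity inequality $w_j/(X_{(j+1)} - X_{(j)}) \geq w_{j+1}/(X_{(j+2)} - X_{(j+1)})$, I would perform a four-case analysis on which of $A$ or $B$ is active at each of $j$ and $j+1$: the $BB$ case uses monotonicity of $k \mapsto \max_{k' \leq k}(X_{(k'+1)} - X_{(k')})$; the $AA$ case gives equality via the identity $(1-S_j)/X_{(j+1)} = (1 - S_{j+1})/X_{(j+2)}$ derived above; and the two mixed cases follow by chaining the previous two.

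The matching upper bound $v_i^W := \sum_{j \geq i} W_j \leq S_i$ for every feasible $W$ is proved by downward induction on $i$. The base case $v_m^W = W_m \leq \Delta = w_m$ follows from the decrement constraint alone. For the step, transitivity of convexity together with $W_k \leq \Delta$ for $k \leq i$ yields $W_i \leq B_i$, and transitivity together with the budget yields $W_i \cdot X_{(i+1)}/(X_{(i+1)} - X_{(i)}) + v_{i+1}^W \leq 1$. If the recursion picks $w_i = B_i$, then $W_i \leq w_i$, and the inductive hypothesis $v_{i+1}^W \leq S_{i+1}$ closes the case. If $w_i = A_i$, the identity $w_i \cdot X_{(i+1)}/(X_{(i+1)} - X_{(i)}) + S_{i+1} = 1$ holds by construction, and subtracting the $W$-inequality from it gives $W_i - w_i \leq (S_{i+1} - v_{i+1}^W)(X_{(i+1)} - X_{(i)})/X_{(i+1)}$, which is at most $S_{i+1} - v_{i+1}^W$ since the fraction lies in $(0,1]$ and $S_{i+1} \geq v_{i+1}^W$ by the inductive hypothesis; rearranging yields $v_i^W \leq S_i$.

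The main obstacle I anticipate is the convexity verification for the recursive $w$: the four cases interact through both the running sum $S_{j+1}$ and the running maximum $\max_{k \leq j}(X_{(k+1)} - X_{(k)})$, and careful bookkeeping is required to confirm the mixed cases. Two smaller technical points round out the argument: degenerate ties $X_{(i+1)} = X_{(i)}$, where continuity of $g$ forces $W_i = 0$ for any feasible $g$, matching the convention $w_i = 0$ in the statement; and the infinite right endpoint $X_{(m+1)} = \infty$, which is handled by taking the trailing linear ramp to $0$ on $[X_{(m)}, X_{(m)} + C]$ with $C$ large enough that its slope is at least as flat as the slope on the last non-degenerate interval preceding $X_{(m)}$.
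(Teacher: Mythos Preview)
Your approach is correct and essentially the same as the paper's: both prove the upper bound $g(X_{(i)})\leq\sum_{j\geq i}w_j$ by downward induction with a case split on which term in the $\min$ defines $w_i$, and both construct the matching piecewise-linear $g$ and verify membership in $\mathcal{G}(\Delta)$. Two minor simplifications the paper exploits: for the convexity check it suffices to case only on which branch is active at the \emph{upper} index (using $w_i\leq\min\{A_i,B_i\}$ regardless), collapsing your four cases to two; and for the right tail the paper simply takes $g$ constant equal to $w_m$ on $[X_{(m)},\infty)$, since the convention $g(+\infty)=0$ only encodes the constraint $g(X_{(m)})\leq\Delta$ and does not require the limit of $g$ to vanish, so no ramp is needed.
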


\refstepcounter{examplesettings}\label{example:permutation_tests}
\subsection{Example \theexamplesettings: permutation tests} 
\label{subsec:permutationtests}
Suppose that we are analyzing $m$ many A/B trials---for instance, clinical trials for $m$ medications. In the $i$th trial, there are $n_i$ units (i.e., patients in the clinical trial), with $n_{i1}$ assigned to the treatment group and $n_{i0} = n_i-n_{i1}$ to the control group. Let $X_i=(X_{i1},\dots,X_{in_i})$ denote the observed data for the $i$th trial (where, without loss of generality, we assume that the data $X_i$ is arranged so that the first $n_{i1}$ entries are the treated units, and the remaining~$n_{i0}$ entries are the control units). Our test statistic for the $i$th trial is given by $T_i(X_i)$, for some function $T_i$---for example, a two-sample t-statistic comparing the data $X_{i1},\dots,X_{in_{i1}}$ for the treated group and $X_{i(n_{i1}+1)},\dots,X_{in_i}$ for the control group, in the case of real-valued data. 

If the sample sizes $n_i$ are relatively small then we often would not want to rely on the asymptotic validity of a t-test, and instead might use a permutation test to produce p-values
\begin{equation}
\label{eq:permutationpvalues}
p^{\textnormal{perm}}_i = \frac{1}{n_i!}\sum_{\sigma\in\mathcal{S}_{n_i}} \One{T_i(X_i^\sigma) \geq T_i(X_i)},
\end{equation}
where $\mathcal{S}_{n_i}$ is the set of permutations on $[n_i]$, and $X_i^\sigma$ denotes the vector $X_i$ permuted by~$\sigma$.
However, when these sample sizes $n_i$ are fairly small for each trial $i$, these p-values are very coarse: for example, if the $i$th trial has $n_i=8$ units, then $p^{\textnormal{perm}}_i$ cannot be smaller than $\frac{1}{\binom{8}{4}}\approx 0.014$. This means that a multiple testing procedure will often be unable to make any discoveries, if the number of trials $m$ is large.\footnote{For discrete problems where p-values take values on a discrete grid $\{\frac{1}{K},\frac{2}{K},\dots,1\}$, it is possible to use randomization (or ``smoothing'') to provide continuous p-values---that is, if $p^{\textnormal{perm}}_i = \frac{k}{K}$, we would then return a draw from $\textnormal{Unif}((\frac{k-1}{K},\frac{k}{K}])$. Nonetheless, the same issue of low power holds: regardless of signal strength, it is impossible to construct a p-value that is extremely low with high probability.} 

To avoid this issue of low power, we can instead turn to compound p-values. Specifically, we define
\begin{equation}
\label{eq:compoundpvaluesfrompermutation}
p_i = \frac{1}{m}\sum_{j=1}^m \Biggl\{ \frac{1}{n_j!}\sum_{\sigma\in\mathcal{S}_{n_j}} \One{T_j(X_j^\sigma) \geq T_i(X_i)}\Biggr\}.
\end{equation}
This type of approach for pooling permutation tests is proposed in \citet[Section 18.7]{hastie2009elements}---the strategy is, in flavor, an empirical Bayes-style approach: the test statistic $T_i(X_i)$ for the $i$th trial is compared against permuted test statistics $T_j(X_j^\sigma)$ for \emph{every} trial $j\in[m]$, rather than only comparing against permutations of the same trial $i$ (\citet{hastie2009elements} describe this as comparing $T_i(X_i)$ against a ``pooled null distribution''). 
For example, if $T_i(X_i)$ is a two-sample t-statistic, then we would expect each $T_i(X_i)$ to have approximately the same null distribution (asymptotically, a t distribution), and so this comparison is well motivated. However, we emphasize that the following result holds \emph{without} assuming any distributional properties---even if the sample sizes $n_i$ for each trial are small (i.e., if the t distribution is a poor approximation).

\begin{proposition}\label{prop:perm_test_compound}
    Assume that 
    for each $i\in\nulls$, we have $X_{i1},\dots,X_{in_i}\iidsim P_i$ for some distribution $P_i$, and assume also that the data vectors $X_1,\dots,X_m$ are mutually independent. Then $p_1,\dots,p_m$ are compound p-values.
    Moreover, defining $\widehat{P}_i = \frac{1}{n_i}\sum_{\ell=1}^{n_i}\delta_{X_{i\ell}}$ as the empirical distribution of $X_i$ for each $i\in[m]$, conditionally on $\widehat{P}_1,\dots,\widehat{P}_m$ it holds that $p_1,\dots,p_m$ are independent compound p-values.
\end{proposition}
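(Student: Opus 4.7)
My plan is to reduce the compound p-value condition $\sum_{i\in\nulls}\PP{p_i\leq t}\leq mt$ to a purely deterministic weighted-rank inequality, using the exchangeability of each null data vector $X_i$. For bookkeeping, let $Y_{j\sigma}:=T_j(X_j^\sigma)$ for every $j\in[m]$ and $\sigma\in\mathcal{S}_{n_j}$, and assign weight $1/n_j!$ to each pair $(j,\sigma)$, so that the total weight across all pairs equals $m$. Then $p_i$ is $1/m$ times the cumulative weight of pairs whose $Y$-value is at least $T_i(X_i)$.

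The first step is the exchangeability move. For each null $i$, the iid assumption on $X_i$ together with independence across trials gives $(X_1,\dots,X_i^\tau,\dots,X_m)\eqd(X_1,\dots,X_i,\dots,X_m)$ for every $\tau\in\mathcal{S}_{n_i}$. Averaging over $\tau$ yields $\PP{p_i\leq t}=\EE{\frac{1}{n_i!}\sum_\tau \One{p_{i,\tau}\leq t}}$, where $p_{i,\tau}$ denotes $p_i$ recomputed after substituting $X_i^\tau$ for $X_i$. The key point is that this substitution only changes the threshold (from $T_i(X_i)$ to $Y_{i\tau}$) and not the multiset $\{Y_{j\sigma}\}$: for $j\neq i$ the $Y_{j\sigma}$'s are untouched, while for $j=i$ the collection $\{T_i((X_i^\tau)^\sigma):\sigma\in\mathcal{S}_{n_i}\}$ coincides with $\{Y_{i\sigma}:\sigma\in\mathcal{S}_{n_i}\}$ up to relabeling. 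Hence $p_{i,\tau}=\frac{1}{m}\sum_{j,\sigma}\frac{1}{n_j!}\One{Y_{j\sigma}\geq Y_{i\tau}}$ is determined by the multiset of $Y$-values together with the index $(i,\tau)$, and summing over $i\in\nulls$ reduces the claim to a pathwise inequality.

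The main technical step is to establish, for any fixed values of $Y_{j\sigma}$, the deterministic bound $\sum_{i=1}^m \frac{1}{n_i!}\sum_{\tau\in\mathcal{S}_{n_i}} \One{p_{i,\tau}\leq t}\leq mt$. To prove this I would list the distinct values of the $Y_{j\sigma}$'s in decreasing order $v_1>v_2>\cdots>v_L$ and let $W_\ell$ be the total weight carried by pairs with $Y$-value $v_\ell$, so $W_1+\cdots+W_L=m$. A pair $(i,\tau)$ with $Y_{i\tau}=v_\ell$ satisfies $p_{i,\tau}=(W_1+\cdots+W_\ell)/m$, so the indicator equals $1$ precisely when $\ell\leq L^*:=\max\{\ell:W_1+\cdots+W_\ell\leq mt\}$; the left-hand side then telescopes to $W_1+\cdots+W_{L^*}\leq mt$ by the definition of $L^*$. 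I expect this weighted-rank inequality to be the main obstacle in the sense that all the combinatorial content lives there, though the argument itself is clean once ties are handled by grouping together pairs with a common $Y$-value as above.

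For the conditional statement, I would observe that each $\widehat{P}_j$ is a function of $X_j$ alone, so conditioning on $(\widehat{P}_1,\dots,\widehat{P}_m)$ preserves the mutual independence of the $X_j$'s and hence of the statistics $T_1(X_1),\dots,T_m(X_m)$. Since the multisets $\{Y_{j\sigma}\}$ are determined by $(\widehat{P}_1,\dots,\widehat{P}_m)$, each $p_i$ becomes a conditionally deterministic function of the single statistic $T_i(X_i)$, delivering conditional independence of $p_1,\dots,p_m$. Re-running the exchangeability argument above conditionally on $(\widehat{P}_1,\dots,\widehat{P}_m)$---which is valid because every null $X_i$ is conditionally uniform over the orderings of its entries---together with the same deterministic weighted-rank inequality (now holding conditionally on the $\widehat{P}_j$'s) then yields the conditional compound p-value bound.
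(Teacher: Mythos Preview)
Your argument is correct. The exchangeability reduction and the deterministic weighted-rank inequality are both sound, and the conditional statement follows as you describe.

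However, your route differs from the paper's. The paper does not prove a bespoke rank inequality; instead it identifies, for each $j$, the function $\bar{F}_j(t)=\frac{1}{n_j!}\sum_{\sigma}\One{T_j(X_j^\sigma)\geq t}$ as the right-tailed \emph{conditional} CDF of $T_j(X_j)$ given $\{\widehat{P}_\ell\}_{\ell\in[m]}$ (for $j\in\nulls$), recognizes $p_i=\frac{1}{m}\sum_{j=1}^m\bar{F}_j(T_i(X_i))$ as an ``average of null CDFs,'' and then invokes Proposition~\ref{prop:average_null_CDFs} conditionally. Thus the paper factors the argument through a general-purpose lemma that it also reuses in several other examples, whereas you give a self-contained combinatorial proof. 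At bottom the two arguments are the same idea---your threshold $L^*$ is precisely the discrete analogue of the $y^*$ in the proof of Proposition~\ref{prop:average_null_CDFs}---but the paper's packaging is more modular, while yours has the virtue of not requiring any auxiliary result. One small presentational point: in your deterministic bound you sum over all $i\in[m]$ after having reduced only the terms with $i\in\nulls$; this is harmless since the full-sum bound is stronger, but it would be cleaner to note this explicitly.
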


To understand the value of the second claim, recall that, from our main results in Section~\ref{sec:main_results}, independence among the $p_i$'s is important in order to establish FDR control for the BH procedure (without a $\mathcal{O}(\log m)$ inflation factor). In this example, it is clear that the $p_i$'s are \emph{not} independent, since each $p_i$ depends on every data vector $X_j=(X_{j1},\dots,X_{jn_j})$ (over all $j\in[m]$). However, we can still establish independence, after conditioning on the order statistics---and therefore, the BH procedure will be guaranteed to satisfy the bound on FDR established in Theorem~\ref{thm:fdr_independent}.

\refstepcounter{examplesettings}\label{example:MC_pvalues}
\subsection{Example \theexamplesettings: Monte Carlo p-values} 
We next develop another example where it is possible to compute p-values, but where these p-values are very coarse and lead to low power. Consider a setting where we observe data~$X_i$ for the $i$th hypothesis $H_{0,i}$, and the null distribution is computationally intractable, so that we are only able to compute a p-value via Monte Carlo sampling. 
Specifically, letting $X_{i1},\dots,X_{iK}$ be i.i.d.\ draws from the null distribution specified by $H_{0,i}$, we can define a Monte Carlo p-value
\[p^{\textnormal{MC}}_i = \frac{1 + \sum_{k=1}^K \One{T_i(X_{ik};\widehat{P}_i)\geq T_i(X_i;\widehat{P}_i)}}{1+K},\]
where $T_i$ is some function defining the test statistic, and where $\widehat{P}_i = \frac{1}{1+K}\bigl(\delta_{X_i}+\sum_{k=1}^K\delta_{X_{ik}}\bigr)$ is the empirical distribution of $(X_i,X_{i1},\dots,X_{iK})$---the real data and the Monte Carlo samples for the $i$th test. For example, we might define $T_i(X_i;\widehat{P}_i)$ as a t-statistic, 
\[T_i(X_i;\widehat{P}_i) = \left|\frac{X_i - \hat\mu_i}{\hat\sigma_i}\right|\]computed by comparing $X_i$ against the sample mean $\hat\mu_i$ and sample variance $\hat\sigma^2_i$ of the values $(X_i,X_{i1},\dots,X_{iK})$.

The values $p^{\textnormal{MC}}_i$ are valid p-values---but if $m$ is large, then we might only be able to afford a relatively small number $K$ of Monte Carlo draws for testing each null. Consequently, since these p-values cannot be smaller than $\frac{1}{K+1}$, we will be unable to achieve good power in a multiple testing procedure.

Instead, we turn to compound p-values to resolve this issue. Define
\[p_i = \frac{1}{m}\sum_{j=1}^m \left\{\frac{\One{T_j(X_j;\widehat{P}_j)\geq T_i(X_i;\widehat{P}_i)} + \sum_{k=1}^K \One{T_j(X_{jk};\widehat{P}_j)\geq T_i(X_i;\widehat{P}_i)}}{1+K}\right\}.\]
\begin{proposition}\label{prop:MC_compound}
Assume that for each $i\in\nulls$, we have $X_i,X_{i1},\dots,X_{iK}\iidsim P_i$ for some distribution $P_i$. Assume also that the data vectors $(X_i,X_{i1},\dots,X_{iK})$ are mutually independent across $i\in[m]$. 
Then $p_1,\dots,p_m$ are compound p-values.
Moreover, conditionally on $\widehat{P}_1,\dots,\widehat{P}_m$ it holds that $p_1,\dots,p_m$ are independent compound p-values.
\end{proposition}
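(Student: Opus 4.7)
The plan is to prove both claims by first establishing them conditionally on $(\widehat{P}_1,\dots,\widehat{P}_m)$, and then integrating out. Write $Y_{ik}=T_i(X_{ik};\widehat{P}_i)$ with the convention $X_{i0}:=X_i$, so that
\[
p_i \;=\; G(Y_{i0}), \qquad G(z) \;:=\; \frac{1}{m(K+1)}\sum_{j=1}^m \sum_{k=0}^K \One{Y_{jk}\geq z}.
\]
The function $G$ is non-increasing and depends on the data only through the multisets $\mathcal{T}_j=\{\!\{Y_{j0},\dots,Y_{jK}\}\!\}$, each of which is a function of $\widehat{P}_j$; hence $G$ is deterministic given the conditioning.

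Conditional independence of $p_1,\dots,p_m$ given $(\widehat{P}_1,\dots,\widehat{P}_m)$ follows at once: the raw vectors $(X_i,X_{i1},\dots,X_{iK})$ are mutually independent across $i$ and each $\widehat{P}_j$ depends only on the $j$th vector, so after conditioning the $X_i$'s remain independent, and therefore so do $Y_{i0}$ and hence $p_i=G(Y_{i0})$. For the compound p-value property, I would invoke the classical fact that i.i.d.\ samples, conditionally on their empirical distribution, form a uniformly random permutation of that multiset; thus for each $i\in\nulls$, conditional on $\widehat{P}_i$, the tuple $(Y_{i0},\dots,Y_{iK})$ is exchangeable and in particular $Y_{i0}$ is uniform on $\mathcal{T}_i$. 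This yields
\[
\sum_{i\in\nulls}\PPst{p_i\leq t}{\widehat{P}_1,\dots,\widehat{P}_m} \;=\; \sum_{i\in\nulls}\frac{|\{y\in\mathcal{T}_i:G(y)\leq t\}|}{K+1} \;\leq\; \frac{1}{K+1}\sum_{i=1}^m|\{y\in\mathcal{T}_i:G(y)\leq t\}|.
\]

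The final bound is a rank calculation. Sort the pool of $N:=m(K+1)$ values in decreasing order as $z_{(1)}\geq\dots\geq z_{(N)}$; by construction $G(z_{(k)})\geq k/N$ because the first $k$ entries of the sorted pool all weakly exceed $z_{(k)}$, so $G(z_{(k)})\leq t$ forces $k\leq \lfloor Nt\rfloor$. Since the right-hand side of the display counts pool elements with $G\leq t$ with multiplicity, it is at most $\lfloor Nt\rfloor/(K+1)\leq mt$. Taking expectation over $(\widehat{P}_1,\dots,\widehat{P}_m)$ then delivers the unconditional compound p-value property. I anticipate the main obstacle to be purely bookkeeping---careful handling of ties in the rank argument, and making the exchangeability-under-the-empirical-distribution step fully rigorous---rather than any genuine technical difficulty, since the heart of the argument is simply the observation that each $p_i$ can be written as a monotone function of a single random variable whose conditional law is uniform on a known multiset.
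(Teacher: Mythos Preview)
Your proposal is correct and follows essentially the same route as the paper: condition on $(\widehat{P}_1,\dots,\widehat{P}_m)$, use that for $i\in\nulls$ the i.i.d.\ sample $(X_i,X_{i1},\dots,X_{iK})$ is exchangeable given its empirical distribution so that $Y_{i0}$ is uniform on the multiset $\mathcal{T}_i$, and observe that $p_i$ is the pooled right-tail empirical CDF evaluated at $Y_{i0}$. The one difference is cosmetic: where the paper recognizes $p_i=\frac{1}{m}\sum_j \bar{F}_j(Y_{i0})$ with $\bar{F}_j$ the conditional right-tail CDF of $Y_{j0}$ and then simply invokes Proposition~\ref{prop:average_null_CDFs}, you instead carry out a direct rank/counting argument on the pooled sample. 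Your rank argument is correct (your inequality $G(z_{(k)})\geq k/N$ handles ties cleanly, and monotonicity of $k\mapsto G(z_{(k)})$ shows the set of positions with $G\leq t$ is an initial segment of size at most $\lfloor Nt\rfloor$), and it is effectively a self-contained re-derivation of the special case of Proposition~\ref{prop:average_null_CDFs} needed here. Either way one lands on the same conditional compound p-value bound, and the unconditional statement and conditional independence follow exactly as you say.
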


\section{Online article headline data}\label{sec:experiments}

We present an example of a data set that falls into the category of Example~\ref{example:permutation_tests} (Section~\ref{subsec:permutationtests}).  The \texttt{Upworthy} data set \citep{matias2021upworthy} records click rates for each of $22{,}739$ online articles.\footnote{The data are available at \url{https://upworthy.natematias.com/}. Code to reproduce the experiment is available at \url{https://github.com/rinafb/FDR_compound/}.}  Different versions of headlines are often employed, to assess effectiveness at increasing user engagement.  In total, there are $64{,}051$ headlines, with the number of headlines tested for each article ranging between $1$ and $22$, and having a median of $3$.  For each variant, data are recorded on the number of ``impressions'' (times that a user is presented with the headline), and the number of ``clicks'' (i.e., for how many of the impressions did the user choose to click on the article). 

Online article headlines commonly start with, or contain, numerical digits (e.g., headlines such as ``$5$ simple reasons that...'' or ``How to ... in $1$ minute''). In this experiment, we are interested in whether or not the presence of any numerical digits in the headline increases the click rate; this type of effect has been noted in many studies on various data sets, e.g., \citet{biyani20168}. 
We restrict attention to the $m=3{,}893$ articles, with $17{,}033$ headlines in total, for which there are versions of the headlines that do and do not have a numerical digit; among this subset of the data, the median number of headlines per article is $6$.

\begin{figure}[t]
    \includegraphics[width=\textwidth]{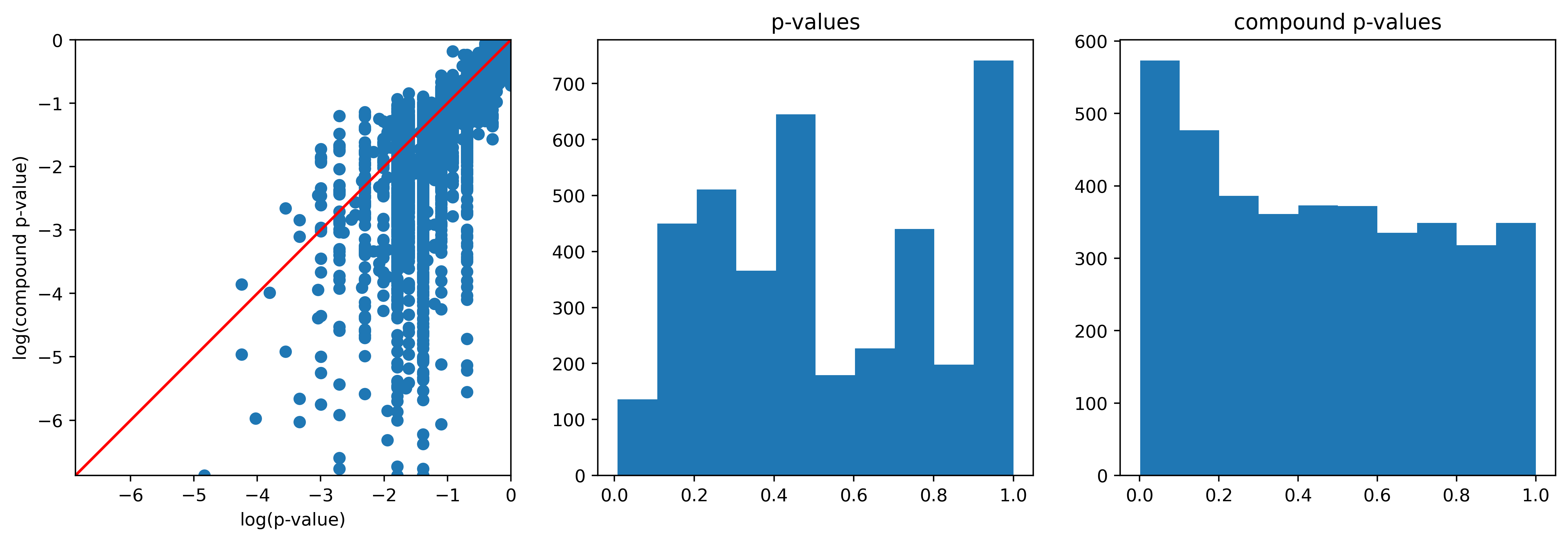}
    \caption{Visualization of the p-values, and compound p-values, obtained for the Upworthy data set. See Section~\ref{sec:experiments} for details.}
    \label{fig:upworthy_0}
\end{figure}

\begin{figure}[t]
    \includegraphics[width=\textwidth]{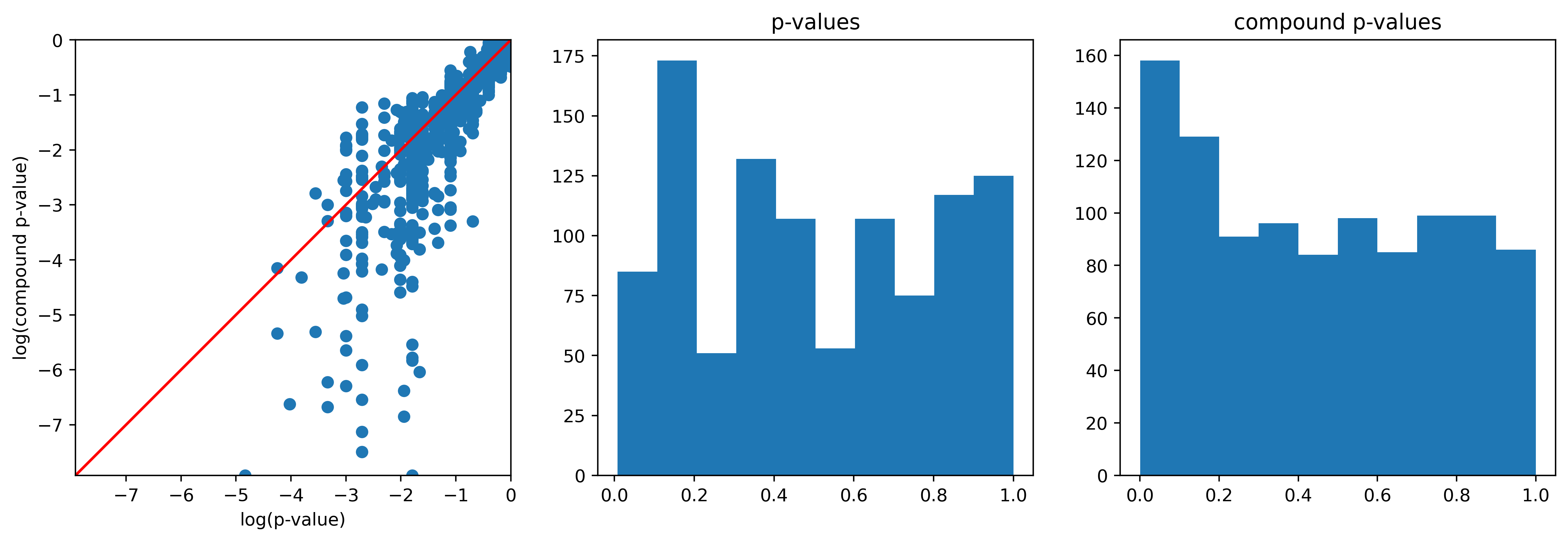}
    \caption{Visualization of the p-values, and compound p-values, obtained for the Upworthy data set, when only articles with more than five headlines are included. See Section~\ref{sec:experiments} for details.}
    \label{fig:upworthy_1}
\end{figure}

\paragraph{The data.}
The data consist of values
\[X_{ij} = (X_{ij}^{\textnormal{impr}},X_{ij}^{\textnormal{click}}),\]
for article $i=1,\dots,m$ and headline $j=1,\dots,n_i$. Headlines $j=1,\dots,n_{i1}$ contain a numerical digit, while headlines $j=n_{i1}+1,\dots,n_i$ do not. Here $X_{ij}^{\textnormal{impr}}$ is the number of impressions for the $j$th version of the $i$th article's headline, and $X_{ij}^{\textnormal{click}}$ is the corresponding number of clicks. As in Example~\ref{example:permutation_tests} above, the null $H_{0,i}$ is the hypothesis that data points $X_{i1},\dots,X_{in_i}$ are i.i.d.\ (or more generally, exchangeable) for article $i$---that is, the presence or absence of a numerical digit in a headline is not associated with any systematic change in the click rate for the $i$th article. In particular, it may be the case that headlines $j=1,\dots,n_i$ have varying click rates (as might occur in a random effects model, say), but these click rates do not depend on the presence of a numerical digit.

\paragraph{Defining a test statistic.}
Our test statistic is defined by pooling together all articles with, and without, numerical digits, and performing the (one-sided) Fisher's exact test on the resulting $2\times 2$ contingency table:
\[T_i(X_i) = 1-p_{\textnormal{Fisher}}\left( \left[\begin{array}{cc} \sum_{j=1}^{n_{i1}} X_{ij}^{\textnormal{click}}& \sum_{j=1}^{n_{i1}}  X_{ij}^{\textnormal{no-click}}\\ 
\sum_{j=n_{i1}+1}^{n_i}X_{ij}^{\textnormal{click}}& \sum_{j=n_{i1}+1}^{n_i} X_{ij}^{\textnormal{no-click}}\end{array}\right]\right),\]
where $p_{\textnormal{Fisher}}(\cdot)$ denotes the one-sided p-value for Fisher's exact test, and where $X_{ij}^{\textnormal{no-click}} = X_{ij}^{\textnormal{impr}} - X_{ij}^{\textnormal{click}}$ is the number of impressions that did not lead to a click for the $j$th headline variant tested for the $i$th article. (We define the test statistic as $1-p_{\textnormal{Fisher}}(\cdot)$, rather than $p_{\textnormal{Fisher}}(\cdot)$, to follow the convention that large values of $T_i(X_i)$ indicate evidence against the null.)\footnote{We remark that we would not expect Fisher's exact test to provide a valid p-value. This is because, for any given article $i$, each cell of the $2\times 2$ contingency table is pooling data from multiple articles---for instance, if $n_i=6$, we might have $3$ articles with a numerical digit, and $3$ without. Even if the presence of a numerical digit has no systematic effect, heterogeneity among these various headlines might mean that the click rate differs across the $6$ headlines, and the observed click rates therefore exhibit a more complex structure than uniformly random sampling.}

\paragraph{Results.} Using the test statistic defined above, permutation p-values $p^{\textnormal{perm}}_i$ were computed as in~\eqref{eq:permutationpvalues}, and compared with the compound p-values $p_i$ from~\eqref{eq:compoundpvaluesfrompermutation}. The results are shown in Figure~\ref{fig:upworthy_0}. We can see that the p-values defined as in~\eqref{eq:permutationpvalues} lack power on this dataset. This is due to the small number of different headlines for each article---since the median value of $n_i$ is~$6$, a permutation test on this sample size cannot lead to a low p-value. In contrast, the compound p-value framework is able to borrow strength across the whole suite of tests to increase power, with many low values of $p_i$. In particular, the first panel of Figure~\ref{fig:upworthy_0} shows that the compound p-value $p_i$ is often substantially smaller than its p-value counterpart $p^{\textnormal{perm}}_i$.

Since low values of $n_i$ naturally lead to low power as described above, we also repeat this experiment with a smaller subset of the data, keeping only those articles with $>5$ headlines (i.e., $n_i>5$). After this filtering step, $m=1{,}025$ articles remain, with $8{,}705$ headlines in total, and a median of $8$ headlines per article. We then calculate the p-values and compound p-values for this reduced data set, exactly as before, and display these results in Figure~\ref{fig:upworthy_1}.

Finally, we apply the BH procedure to the p-values and compound p-values obtained from each experiment, with results shown in Table~\ref{tab:upworthy}, which confirms that, in this example, the increased power of the compound p-values can enable greater discoveries. (We remark that, according to the guarantee of Theorem~\ref{thm:fdr_independent}, when using compound p-values the FDR for the BH procedure at $\alpha=0.2$ might potentially be as high as $1.93\cdot 0.2$. Nonetheless, on this dataset, using compound p-values is still more powerful than using p-values, since the BH procedure is unable to make any discoveries with p-values even at $\alpha =0.5$.)

\begin{table}[t]
\centering
\begin{tabular}{c||c|c||c|c}
& \multicolumn{2}{c||}{All articles}& \multicolumn{2}{c}{Articles with $>5$ headlines}\\\hline
& \multirow{2}{*}{p-values} & compound& \multirow{2}{*}{p-values} & compound\\
& & p-values & & p-values\\\hline
BH with $\alpha = 0.1$ &0&0&0&0\\\hline
BH with $\alpha = 0.2$ &0&0&0&11\\\hline
BH with $\alpha = 0.5$ &0&57&0&36
\end{tabular}
\caption{Number of discoveries obtained from the Benjamini--Hochberg procedure, applied to p-values or compound p-values computed on the Upworthy data set. See Section~\ref{sec:experiments} for details. (Note that BH with p-values guarantees $\fdr\leq \alpha$, while BH with compound p-values offers a weaker guarantee $\fdr\leq 1.93\alpha$ in the worst case.)}
\label{tab:upworthy}
\end{table}

\section{Discussion}\label{sec:discussion}

In this work, we have established bounds on the false discovery rate of the BH procedure in the setting of compound (or approximately compound) p-values. Our results show that, under independence, the FDR can be bounded as $\mathcal{O}(\alpha)$, but a bound of $\alpha$ is not possible (without further assumptions). In contrast, under the PRDS condition, the previously known bound on FDR given by $\alpha\mathsf{h}_m$ \citep{armstrong2022false} cannot be improved beyond a constant factor, which is qualitatively very different from the results for p-values where the PRDS condition is sufficient to ensure the same FDR bound as under independence \citep{benjamini1995controlling}. We have also developed new examples of problems where compound p-values enable powerful inference, including some settings in which p-values would be (nearly) powerless.

We conclude with a brief overview of some related works in the literature, and a discussion of some possible extensions and open questions suggested by this work.

\subsection{Additional related work}

As discussed in Section~\ref{sec:intro}, the idea of compound p-values was introduced by \citet{armstrong2022false}, under the name of ``average significance level controlling tests'', and further studied by \citet{ignatiadis2025empirical} and \citet{ignatiadis2024compound} under the name ``compound p-values''. The notion of compound p-values is closely related to the idea of confidence intervals for multiple parameters of interest (say, a confidence interval $C_j$ for a parameter $\theta_j$, for each $j\in[m]$), which are only required to have coverage in an average sense, i.e., $\frac{1}{m}\sum_{j=1}^m\PP{\theta_j\in C_j}\geq 1-\alpha$. This type of property for confidence intervals is studied by \citet{armstrong2022robust}, where the construction is motivated by an empirical Bayes argument but enjoys finite-sample (average) coverage properties. A related line of earlier work is on confidence bands for an unknown target function $f(t)$, where coverage is averaged over $t$ rather than required to hold pointwise; see \citet{cai2014adaptive} and \citet[Chapter 5.8]{wasserman2006all}.

The FDR control results studied in this work are all finite-sample results, which hold regardless of the values of $m$ and of the number of non-nulls $m-|\nulls|$, but other works examine FDR control from an asymptotic viewpoint. For the BH procedure on p-values, the work of \citet{storey2002direct} and \citet{storey2004strong} shows that the FDP converges to $\alpha$, under asymptotic assumptions on the empirical distribution of the p-values (in particular, the empirical distribution of the nulls should converge to the uniform distribution). \citet{armstrong2022false} establishes analogous results for compound p-values.

Finally, an alternative to the framework of p-values is the notion of \emph{e-values}, which have been increasingly studied in the recent statistics literature; see \citet{ramdas2024hypothesis} for an overview of the background and current research on this topic. An e-value $e_i$ for testing a null hypothesis $H_{0,i}$ is a nonnegative random variable satisfying $\EE{e_i}\leq 1$ if the null is true (in contrast, a p-value satisfies $\PP{p_i\leq t}\leq t$ if the null is true). \citet{wang2022false} develop the e-BH procedure, which is an analogue of the BH procedure for e-values rather than p-values, and establish that the FDR is controlled at level $\alpha$ even under the relaxed assumption of compound e-values rather than (individually valid) e-values. This stands in contrast to the setting of p-values, where we have seen that FDR control guarantees for p-values can fail to hold for compound p-values. Compound e-values are studied further by \citet{ignatiadis2024compound}, who also provide mechanisms for converting between compound p-values and compound e-values.

\subsection{Extensions and open questions}

Finally, we briefly mention several directions for further research that are suggested by the findings of this work. 
First, our results show that the PRDS assumption is, for compound p-values, not sufficient for avoiding a $\mathcal{O}(\log m)$ factor in the bound on FDR. Is it possible that an alternative assumption, which allows for a richer class of distributions than independence, would enable a finite-sample bound on FDR that is $\mathcal{O}(\alpha)$?

Second, in the setting of independence, Theorem~\ref{thm:fdr_independent} guarantees that FDR can be  at most $1.93\alpha$, but in practice we might expect that FDR will often be closer to $\alpha$, i.e., without substantial inflation. An interesting question is to determine the types of settings in which substantial inflation is likely, or unlikely, to occur. Moreover, the FDR is the expected value of the false discovery proportion (FDP), but examining its variance and providing bounds on its tail behavior are also important questions. These types of results have been extensively studied for BH in the setting of p-values \citep{ferreira2006benjamini,storey2004strong}; can they be extended to the setting of compound p-values, for tighter control of the FDP?

Next, one drawback of working with the FDR is the possibility of ``cheating with FDR'', in the terminology of \citet{finner2001false}, who explain
    \begin{quote}
        ``...the problematic nature of the FDR concept. If an experimenter is interested in rejecting a hypothesis, he may choose a setting with two hypotheses one of which is known to be rejected with probability near $1$. Then the hypothesis of primary interest can be tested at level $2\alpha$.'' \citep[Section~6]{finner2001false}
    \end{quote}
In other words, since FDR only constrains the proportion of false discoveries, it does not incentivize the analyst to be confident in each individual hypothesis that is rejected. For p-values, this concern motivates the consideration of measures of error such as local FDR \citep{efron2001empirical} rather than FDR. (See \citet{efron2001empirical} and \citet{efron2012large} for Bayesian and empirical Bayesian approaches to the local FDR; additionally, recent work by \citet{soloff2024edge} and \citet{xiang2025frequentist} proposes a method with a finite-sample frequentist guarantee.)
In the setting of compound p-values, this issue with FDR might be magnified, since we have the flexibility to prioritize certain hypotheses even without any non-nulls because the $p_i$'s only need to satisfy an average notion of validity. This motivates an exploration of the possibility of a local FDR based-approach to the multiple testing problem, in the setting of compound p-values.

\subsection*{Acknowledgements}
R.F.B.\ was partially supported by the National Science Foundation via grant DMS-2023109, and by the Office of Naval Research via grant N00014-24-1-2544.  R.J.S.\ was supported by European Research Council Advanced Grant 101019498. 
The authors thank John Duchi, Nikolaos Ignatiadis, Aaditya Ramdas, and Jake Soloff for discussions that helped inspire and shape this work.

\bibliographystyle{apalike}
\bibliography{bib}

\appendix
\setcounter{theorem}{0}
\renewcommand{\thetheorem}{S\arabic{theorem}}
\section{Proofs of upper bounds on FDR}\label{app:proofs_mainresults}

In this section, we prove our main results bounding the FDR
for independent compound p-values (Theorems~\ref{thm:fdr_independent},~\ref{thm:fdr_approx}, and~\ref{thm:fdr_globalnull}).

\subsection{Preliminaries: a reformulation of BH}
We begin by deriving an equivalent characterization of the Benjamini--Hochberg procedure (this result is closely related to leave-one-out style analyses of the BH procedure---see, e.g., \citet{benjamini2001control,ferreira2006benjamini}). For each $i\in\{0,1,\dots,|\nulls|\}$, define
\begin{equation}\label{eqn:reformulate_BH_define_ki}k_i = \max\left\{k\in[m] : i + \sum_{j\not\in\nulls}\One{p_j\leq \frac{\alpha k}{m}} \geq k\right\},\end{equation}
or $k_i=0$ if this set is empty. We have $k_0\leq \dots \leq k_{|\nulls|}$, and $k_i\geq i$ for each $i$, by construction.
Note that $k_i$ is random, as it is a function of the non-null p-values $(p_j)_{j\not\in\nulls}$, but it does not depend on the null p-values. We can interpret $k_i$ as the total number of rejections, if exactly $i$ many true nulls are rejected. Define also
\begin{equation}\label{eqn:reformulate_BH_define_I}I = \max\left\{i\in\{0,1,\dots,|\nulls|\} : \sum_{j\in\nulls}\One{p_j\leq \frac{\alpha k_i}{m}} \geq i\right\}.\end{equation}
The following lemma (which is proved in Appendix~\ref{app:proofs_lemmas}) establishes the connection of these definitions to the BH procedure.
\begin{lemma}\label{lem:BH_condition_on_nonnulls}
    Fix any $p_1,\dots,p_m\in[0,1]$, and let $k_i$ and $I$ be defined as above. Then the BH procedure satisfies 
    \[|\Sbh| = k_I\textnormal{ and }|\Sbh\cap\nulls| = I.\]
\end{lemma}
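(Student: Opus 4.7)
The plan is to establish both equalities by directly comparing the defining conditions for $\kh$, $k_i$, and $I$. Introduce the shorthand
\[
N(k) = \sum_{j\in\nulls}\One{p_j \leq \alpha k/m}, \qquad M(k) = \sum_{j\notin\nulls}\One{p_j \leq \alpha k/m},
\]
both nondecreasing in $k$. The total count of p-values below $\alpha k/m$ is then $N(k) + M(k)$, so that $\kh = \max\{k \in [m] : N(k) + M(k) \geq k\}$ (with $\kh = 0$ if the set is empty), while $k_i = \max\{k \in [m] : i + M(k) \geq k\}$ (again $0$ if empty). The two quantities to match are $|\Sbh| = \kh$ and $|\Sbh\cap\nulls| = N(\kh)$.

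First I would prove the auxiliary identity $k_{N(\kh)} = \kh$. The direction $k_{N(\kh)} \geq \kh$ is immediate: plugging $i = N(\kh)$ into the defining inequality for $k_i$ yields $N(\kh) + M(\kh) \geq \kh$, which is just the BH condition at $\kh$. For the reverse, if $k_{N(\kh)} > \kh$ then monotonicity of $N$ gives
\[
N(k_{N(\kh)}) + M(k_{N(\kh)}) \geq N(\kh) + M(k_{N(\kh)}) \geq k_{N(\kh)},
\]
contradicting the maximality of $\kh$. The degenerate case $\kh = 0$ is handled the same way and forces $k_0 = 0$.

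Next I would argue $I = N(\kh)$. The inequality $I \geq N(\kh)$ follows because, by the previous step, $N(k_{N(\kh)}) = N(\kh)$, so $i = N(\kh)$ is in the set defining $I$. Conversely, if $I > N(\kh)$ then $N(k_I) \geq I$ and $I + M(k_I) \geq k_I$ together imply $N(k_I) + M(k_I) \geq k_I$; maximality of $\kh$ then forces $k_I \leq \kh$, and monotonicity of $N$ yields $N(k_I) \leq N(\kh) < I$, a contradiction. Combining gives $k_I = k_{N(\kh)} = \kh$ and $I = N(\kh)$, which is exactly the claim.

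There is no real obstacle: the argument is pure bookkeeping, and the only care required is in handling the ``empty set'' conventions ($\kh = 0$, $k_i = 0$) and keeping track of the two monotonicity comparisons so that the strict-inequality contradictions go through cleanly in the degenerate regime.
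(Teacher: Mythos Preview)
Your proof is correct and follows essentially the same approach as the paper's: both arguments hinge on the auxiliary quantity $N(\kh)$ (the paper calls it $i$), establishing $k_{N(\kh)} \geq \kh$ and $I \geq N(\kh)$ via the defining inequalities, and then using maximality of $\kh$ to close the loop. Your organization is slightly cleaner---you isolate $k_{N(\kh)} = \kh$ as a separate step before treating $I$, whereas the paper interleaves the two---but the substance is identical.
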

\noindent In particular, the random variable $I$ defined in~\eqref{eqn:reformulate_BH_define_I} is equal to the number of null hypotheses that are rejected.
We therefore have
\begin{equation}\label{eqn:rewrite_fdr}\fdp(\Sbh) = \frac{|\Sbh\cap\nulls|}{1\vee |\Sbh|} = \frac{I}{1\vee k_I}.\end{equation}

\subsection{Proof of Theorem~\ref{thm:fdr_independent}}
We are now ready to prove our bound on FDR for the setting of independent compound p-values. We will begin with the reformulation of FDR derived above in~\eqref{eqn:rewrite_fdr}: we have
\[\fdp(\Sbh) =\frac{I}{1\vee k_I} = \sum_{i=1}^{|\nulls|} \frac{i}{k_i}\cdot \One{I=i}\]
and so
\[\EEst{\fdp(\Sbh)}{(p_j)_{j\not\in\nulls}} = \sum_{i=1}^{|\nulls|}  \frac{i}{k_i}\cdot q_i\textnormal{ where }q_i=\PPst{I=i}{(p_j)_{j\not\in\nulls}}.\]
A straightforward calculation verifies that this can equivalently be written as
\begin{equation}\label{eqn:rewrite_fdr_qbar_i}\EEst{\fdp(\Sbh)}{(p_j)_{j\not\in\nulls}} = \sum_{i=1}^{|\nulls|}  \frac{i}{k_i}\cdot \bar{q}_i\prod_{j=i+1}^{|\nulls|}(1-\bar{q}_j)\textnormal{ where }\bar{q}_i = \frac{q_i}{1 - \sum_{j=i+1}^{|\nulls|}q_j}.\end{equation}
Note that we can interpret $\bar{q}_i$ as
$\bar{q}_i = \PPst{I=i}{I\leq i; (p_j)_{j\not\in\nulls}}$.
Now  define events
\begin{equation}\label{eqn:define_Ei}\mathcal{E}_i = \left\{ \sum_{j\in\nulls}\One{p_j\leq \frac{\alpha k_i}{m}} \geq i\right\},\end{equation}
and observe that by definition of $I$~\eqref{eqn:reformulate_BH_define_I}, 
\[
\{I=i\} = \bigl\{\mathcal{E}_i \cap \mathcal{E}_{i+1}^c \cap \dots \cap \mathcal{E}_{|\nulls|}^c\bigr\}.
\]
Therefore,
\[\bar{q}_i = \PPst{\mathcal{E}_i}{\mathcal{E}_{i+1}^c \cap \dots \cap \mathcal{E}_{|\nulls|}^c; (p_j)_{j\not\in\nulls}}.\]

Next we will need the following lemma (which is proved in Appendix~\ref{app:proofs_lemmas}):
\begin{lemma}\label{lem:increasing_sets}
    Let $Y_1,\dots,Y_k\in\R$ be independent random variables.
    Then for any increasing sets $A,B\subseteq\R^k$,
    \[\PP{(Y_1,\dots,Y_k)\in A\cap B}\geq \PP{(Y_1,\dots,Y_k)\in A}\cdot \PP{(Y_1,\dots,Y_k)\in B}.\]
\end{lemma}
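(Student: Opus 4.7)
The plan is to prove this lemma (which is Harris's inequality, or equivalently the FKG inequality for product measures on $\R^k$) by induction on $k$. The one-dimensional ingredient I will rely on is the classical association inequality: if $Y$ is real-valued and $f,g:\R\to\R$ are both bounded and nondecreasing, then $\EE{f(Y)g(Y)}\geq \EE{f(Y)}\EE{g(Y)}$. This is standard: letting $Y'$ be an independent copy of $Y$, one has $(f(Y)-f(Y'))(g(Y)-g(Y'))\geq 0$ almost surely, and taking expectations and using independence gives $2\EE{f(Y)g(Y)} - 2\EE{f(Y)}\EE{g(Y)}\geq 0$.

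For the base case $k=1$, any increasing set $A\subseteq\R$ has an indicator $\one_A$ that is a nondecreasing function on $\R$, and similarly for $B$. Applying the association inequality with $f=\one_A$ and $g=\one_B$ yields the claim immediately.

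For the inductive step, assume the result holds on $\R^{k-1}$ and fix increasing sets $A,B\subseteq\R^k$. For each $y\in\R$, define the sections
\[A_y = \{(y_1,\dots,y_{k-1})\in\R^{k-1}: (y_1,\dots,y_{k-1},y)\in A\},\]
and similarly $B_y$. Two easy observations: (i) each $A_y$ is an increasing subset of $\R^{k-1}$ (inherited from $A$), and (ii) the family is monotone in the section parameter, meaning $A_y\subseteq A_{y'}$ whenever $y\leq y'$ (again because $A$ is increasing). Consequently, the functions
\[f(y) := \PP{(Y_1,\dots,Y_{k-1})\in A_y}, \qquad g(y) := \PP{(Y_1,\dots,Y_{k-1})\in B_y}\]
are both nondecreasing in $y$. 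By the inductive hypothesis applied to $Y_1,\dots,Y_{k-1}$ (which are independent and independent of $Y_k$), we have for every $y$ the pointwise bound $\PP{(Y_1,\dots,Y_{k-1})\in A_y\cap B_y}\geq f(y)g(y)$.

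Taking expectation over $Y_k$ and then applying the one-dimensional association inequality to $f(Y_k)$ and $g(Y_k)$ gives
\[\PP{(Y_1,\dots,Y_k)\in A\cap B} = \EE{\PP{(Y_1,\dots,Y_{k-1})\in A_{Y_k}\cap B_{Y_k}}} \geq \EE{f(Y_k)g(Y_k)} \geq \EE{f(Y_k)}\EE{g(Y_k)},\]
and the right-hand side equals $\PP{(Y_1,\dots,Y_k)\in A}\cdot\PP{(Y_1,\dots,Y_k)\in B}$ by independence of $Y_k$ from $(Y_1,\dots,Y_{k-1})$, closing the induction. I do not expect a real obstacle here; the only minor point requiring care is the measurability and monotonicity of $f$ and $g$, which follow directly from the monotonicity of the sections $\{A_y\}$ and $\{B_y\}$ in $y$.
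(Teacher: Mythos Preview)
Your proof is correct and follows essentially the same inductive scheme as the paper: slice on the last coordinate, apply the inductive hypothesis to the sections, and then use the one-dimensional association (rearrangement) inequality on the resulting monotone functions of $Y_k$. The only notable differences are cosmetic: the paper handles the base case by observing that any two increasing subsets of $\R$ are nested (so the inequality is trivial), and it actually states and proves a slightly more general version (their Lemma~\ref{lem:increasing_sets_stronger}) under a PRDS-type conditional monotonicity assumption, of which independence is a special case.
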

Applying this result to the independent random variables $(p_j)_{j\in\nulls}$, and noting that $\mathcal{E}_i^c$ and $\mathcal{E}_{i+1}^c \cap \dots \cap \mathcal{E}_{|\nulls|}^c$ (viewed as subsets of $\R^{|\nulls|}$) are both increasing sets,
\[\PPst{\mathcal{E}_i^c \cap \mathcal{E}_{i+1}^c \cap \dots \cap \mathcal{E}_{|\nulls|}^c}{(p_j)_{j\not\in\nulls}}
\geq \PPst{\mathcal{E}_i^c}{(p_j)_{j\not\in\nulls}}\cdot\PPst{\mathcal{E}_{i+1}^c \cap \dots \cap \mathcal{E}_{|\nulls|}^c}{(p_j)_{j\not\in\nulls}}.\]
Therefore,
\begin{align*}
    \bar{q}_i 
    &= \PPst{\mathcal{E}_i}{\mathcal{E}_{i+1}^c \cap \dots \cap \mathcal{E}_{|\nulls|}^c; (p_j)_{j\not\in\nulls}}\\
    &=1-\PPst{\mathcal{E}_i^c}{\mathcal{E}_{i+1}^c \cap \dots \cap \mathcal{E}_{|\nulls|}^c; (p_j)_{j\not\in\nulls}}\\
    &\leq 1 - \PPst{\mathcal{E}_i^c}{ (p_j)_{j\not\in\nulls}}
    =\PPst{\mathcal{E}_i}{ (p_j)_{j\not\in\nulls}}.
\end{align*}

Next, for any integer $i\geq 1$ and any $t\geq 0$, define
\begin{multline}\label{eqn:define_Bi}B_i(t) = \sup\bigg\{ \PP{A_1+ \dots + A_r\geq i} \ : \ r\geq 1, \textnormal{ and }\\\textnormal{$A_1,\dots,A_r$ are independent Bernoulli random variables with $\EE{A_1 + \dots + A_r}\leq t$}\bigg\}.\end{multline}
By taking $r=m$, and defining $A_j = \One{p_j \leq \frac{t}{m}, j\in\nulls}$, we see that
\[\PPst{\sum_{j\in\nulls}\One{p_j\leq \frac{t}{m}}\geq i}{(p_j)_{j\not\in\nulls}} \leq B_i(t),\]
since the $p_j$'s are independent compound p-values.
In particular, by definition of $\mathcal{E}_i$,
\[\PPst{\mathcal{E}_i}{(p_j)_{j\not\in\nulls}} \leq B_i(\alpha k_i).\]
Combining all the work above, we have
\[\EEst{\fdp(\Sbh)}{(p_j)_{j\not\in\nulls}} =  \sum_{i=1}^{|\nulls|}  \frac{i}{k_i}\cdot \bar{q}_i\prod_{j=i+1}^{|\nulls|}(1-\bar{q}_j) 
 \textnormal{ where } 0\leq \bar{q}_i\leq B_i(\alpha k_i).\]
To complete the proof, we need the following lemma (which is proved in Appendix~\ref{app:proofs_lemmas}):
\begin{lemma}\label{lem:t_q_sum}
Let $B_i(t)$ be defined as above. Then for any $L\geq 1$,
\[\sup\left\{ \sum_{i=1}^L  \frac{i}{t_i}\cdot \bar{q}_i\prod_{j=i+1}^L(1-\bar{q}_j) \ : \ t_i>0, \  0\leq \bar{q}_i\leq B_i(t_i), \ \forall i=1,\dots,L\right\} \leq 1.9227.\]
\end{lemma}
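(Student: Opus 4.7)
The plan is to convert the multidimensional supremum into a one-dimensional recursion. Since the sum is linear in each $\bar{q}_i$ with the other variables fixed, the supremum is attained at an extreme point with $\bar{q}_i\in\{0,B_i(t_i)\}$. A useful probabilistic interpretation is that the sum equals $\EE{(I/t_I)\one{I\geq 1}}$ for the random index $I\in\{0,\dots,L\}$ with $\PP{I=i}=\bar{q}_i\prod_{j>i}(1-\bar{q}_j)$, which makes the telescoping structure transparent.

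Two ingredients control $B_i(t)$. Markov's inequality gives $B_i(t)\leq t/i$, so $(i/t)B_i(t)\leq 1$ for all $t>0$. More importantly, tensorizing the Bernoulli moment generating function bound $\EE{e^{sA_k}}\leq e^{p_k(e^s-1)}$ and optimizing in $s\geq 0$ yields a Chernoff/Poisson-tail bound
\[B_i(t)\leq \exp\bigl(-i\,\phi(t/i)\bigr), \qquad \phi(u)=u-1-\log u\geq 0, \ 0<t\leq i,\]
which makes $B_i(t)$ exponentially small in $i$ whenever $t/i$ is bounded away from $1$.

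Writing $M_L$ for the supremum in the lemma and splitting off the $i=L$ term, the non-negativity of $1-\bar{q}_L$ lets us take the supremum over the remaining $L-1$ parameters freely, giving the recursion
\[M_L = M_{L-1} + \sup_{t_L>0}B_L(t_L)\bigl(L/t_L-M_{L-1}\bigr)_+,\]
with $M_0=0$ and $M_1=\sup_{t>0}B_1(t)/t=1$. Once $M_{L-1}\geq 1$, every near-maximizing $t_L$ lies in $(0,L/M_{L-1})\subseteq(0,L)$, so the Chernoff bound applies and a straightforward optimization in $t_L$ shows that the increment $M_L-M_{L-1}$ decays exponentially in $L$ and forms a summable series; hence $M_L$ converges to a finite limit. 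Combined with explicit (or tightly bounded) evaluations of the recursion for the first few values of $L$, where $M_{L-1}$ is still near $1$ and the increments are largest, this yields the uniform bound $\sup_L M_L\leq 1.9227$. The main obstacle is obtaining this sharp constant rather than only qualitative boundedness: the Markov bound alone yields a non-summable series of increments, so the Chernoff/Poisson bound is essential, and matching the specific constant $1.9227$ requires a careful combination of analytic tail estimates for large~$L$ with (effectively numerical) optimization of the recursion for the few small values of $L$ that dominate the excess over $1$.
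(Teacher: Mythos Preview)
Your recursion $M_L = M_{L-1} + \sup_{t>0} B_L(t)\bigl(L/t - M_{L-1}\bigr)_+$ is correct and is exactly the structure the paper uses (it writes $c_\ell$ for an upper bound on your $M_\ell$ and proves the same telescoping inequality by induction). The gap is in the bound you propose for $B_\ell(t)$. The Chernoff inequality $B_\ell(t)\leq\exp\{-\ell\,\phi(t/\ell)\}$ is strictly weaker than what is needed: already at $L=2$ (with $M_1=1$) it gives an increment of $\sup_{u\in(0,1)} u(1-u)e^{2-2u}\approx 0.85$, and iterating one more step with this value in place pushes the running bound above $2$. So while your argument does show $\sup_L M_L<\infty$, the Chernoff route cannot deliver the constant $1.9227$; your closing remark that ``matching the specific constant\dots\ requires a careful combination'' is exactly the part that is missing a concrete plan.

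The paper closes this gap by replacing Chernoff with Hoeffding's (1956) Poisson comparison, which gives the sharper bound $B_\ell(t)\leq\PP{\textnormal{Pois}(t)\geq\ell}$ whenever $t\leq\ell-1$ (and the elementary $B_2(t)\leq t^2/2$ at $\ell=2$, yielding increment exactly $\tfrac12$). This is then combined with the Poisson identity $\tfrac{\ell}{t}\,\PP{\textnormal{Pois}(t)\geq\ell}\leq\PP{\textnormal{Pois}(t)\geq\ell-1}$, so the increment at step $\ell\geq 3$ is bounded by $\sup_t\bigl\{\PP{\textnormal{Pois}(t)\geq\ell-1}-c_{\ell-1}\,\PP{\textnormal{Pois}(t)\geq\ell}\bigr\}$, whose maximizer is computed in closed form as $t=(\ell-1)/c_{\ell-1}$. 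Iterating the resulting explicit recursion numerically then yields $\lim_\ell c_\ell\leq 1.9227$. Swapping your Chernoff bound for the Hoeffding/Poisson comparison is the missing ingredient.
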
\noindent
Applying this lemma with $t_i = \alpha k_i$, we therefore have
\[\EEst{\fdp(\Sbh)}{(p_j)_{j\not\in\nulls}} =\alpha \cdot  \sum_{i=1}^{|\nulls|}  \frac{i}{\alpha k_i}\cdot \bar{q}_i\prod_{j=i+1}^{|\nulls|}(1-\bar{q}_j) \leq 1.9227\alpha.\]

\subsection{Proof of Theorem~\ref{thm:fdr_approx}}
Theorem~\ref{thm:fdr_approx} extends the results above to the setting of $(\eps,\delta)$-approximate compound p-values. The proof is very similar to the one above.  First, beginning with Lemma~\ref{lem:BH_condition_on_nonnulls}, we have
\[\frac{|\Sbh\cap\nulls|}{1\vee\big(\frac{m\delta}{\alpha} + |\Sbh|(1+\eps)\big)} = \frac{I}{1 \vee \bigl(\frac{m\delta}{\alpha} + k_I(1+\eps)\bigr)}.\]
(We write $1\vee (\dots)$ in the denominator to simultaneously handle both cases, $\delta=0$ and $\delta>0$.)
Therefore, following the same steps as in the calculation~\eqref{eqn:rewrite_fdr_qbar_i} in the proof of Theorem~\ref{thm:fdr_independent}, we have
\[\EEst{\frac{|\Sbh\cap\nulls|}{1\vee\big(\frac{m\delta}{\alpha} + |\Sbh|(1+\eps)\big)}}{(p_j)_{j\not\in\nulls}} = \sum_{i=1}^{|\nulls|}  \frac{i}{\frac{m\delta}{\alpha} + k_i(1+\eps)}\cdot \bar{q}_i\prod_{j=i+1}^{|\nulls|}(1-\bar{q}_j),\]
where as before, $\bar{q}_i = \PPst{I=i}{I\leq i; (p_j)_{j\not\in\nulls}}$. From the same argument as in the proof of Theorem~\ref{thm:fdr_independent} (i.e., by applying Lemma~\ref{lem:increasing_sets}), we have $\bar{q}_i \leq \PPst{\mathcal{E}_i}{(p_j)_{j\not\in\nulls}}$, where $\mathcal{E}_i$ is defined in~\eqref{eqn:define_Ei}.
Next, using the fact that $p_1,\dots,p_m$ are $(\eps,\delta)$-approximate compound p-values (and are independent), for any $t$ we have
\[\PPst{\sum_{j\in\nulls}\One{p_j\leq \frac{t}{m}}\geq i}{(p_j)_{j\not\in\nulls}} \leq B_i\big(t(1+\eps)+m\delta\big).\] In particular, this means that
\[\bar{q}_i \leq \PPst{\mathcal{E}_i}{(p_j)_{j\not\in\nulls}} \leq B_i\big(\alpha k_i (1+\eps) + m\delta\big),\]
for each $i$.
Combining everything, we have shown that
\[
\EEst{\frac{|\Sbh\cap\nulls|}{1\vee\big(\frac{m\delta}{\alpha} + |\Sbh|(1+\eps)\big)}}{(p_j)_{j\not\in\nulls}} =  \alpha\sum_{i=1}^{|\nulls|}  \frac{i}{m\delta + \alpha k_i(1+\eps)}\cdot \bar{q}_i\prod_{j=i+1}^{|\nulls|}(1-\bar{q}_j)\leq 1.9227\alpha,
\]
where the last step holds by applying Lemma~\ref{lem:t_q_sum}, which completes the proof.

\subsection{Proof of Theorem~\ref{thm:fdr_globalnull}}
We next refine our bound in the special case of the global null, $\nulls=[m]$.  Under the global null, we have $\fdp(\Sbh) = \One{\Sbh\neq\varnothing}$, and moreover, 
\[
\Sbh\neq \varnothing \textnormal{ if and only if $\mathcal{E}_i$ occurs for at least one $i\in[m]$},
\]
or equivalently,
\[\fdr = \PP{\Sbh\neq\varnothing} = \PP{\mathcal{E}_1\cup\dots\cup\mathcal{E}_m}.\]
Here we are defining the events $\mathcal{E}_i$ as before: we have
\[\mathcal{E}_i = \left\{ \sum_{j\in\nulls}\One{p_j\leq \frac{\alpha k_i}{m}} \geq i\right\} = \left\{ \sum_{j=1}^m\One{p_j\leq \frac{\alpha i}{m}} \geq i\right\},\]
since under the global null, we have $\nulls=[m]$, and $k_i = i$ for each $i$.

Write $F_j(t) = \PP{p_j\leq t}$ for the CDF of $p_j$, for each $j\in[m]$, and let $F_j^{-1}(s) = \inf\{t : F_j(t) \geq s\}$ denote its generalized inverse.  We may assume without loss of generality that the $p_j$'s are generated as $p_j = F_j^{-1}(U_j)$, where $U_1,\ldots,U_m \iidsim \textnormal{Unif}[0,1]$, so that
\[
\mathcal{E}_i = \left\{\sum_{j=1}^m \One{F_j^{-1}(U_j)\leq \frac{\alpha i}{m}}\geq i\right\} = \left\{\sum_{j=1}^m \One{U_j\leq F_j\left(\frac{\alpha i}{m}\right)}\geq i\right\}
\]
where the last step holds since, due to the fact that $F_j$ is nondecreasing and right-continuous, this means that $F_j^{-1}(u)\leq t$ if and only if $u\leq F_j(t)$.

Next, let $s_i=(s_{i,1},\dots,s_{i,m})$, where $s_{i,j} = F_j(\frac{\alpha i}{m}) = \PP{p_j\leq \frac{\alpha i}{m}}$, for each $i,j\in[m]$. Under the global null, $\sum_{j=1}^m s_{i,j}\leq \alpha i$, for each $i$, since the $p_j$'s are compound p-values.
Then defining events $\mathcal{U}_i(s_i) = \{\sum_{j=1}^m \One{U_j\leq s_{i,j}} \geq i\}$, since $k_i = i$ under the global null, we therefore have $\mathcal{E}_i = \mathcal{U}_i(s_i)$, for each $i$, and so
\begin{multline*}\fdr = \PP{\mathcal{E}_1\cup\dots\cup\mathcal{E}_m}\\\leq\sup\left\{ \PP{\mathcal{U}_1(s_1)\cup\dots\cup\mathcal{U}_m(s_m)}  \ : \ s_1,\dots,s_m \in[0,1]^m,  \ \sum_{j=1}^m s_{i,j}\leq \alpha i  \ \forall  \ i\right\}.\end{multline*}
The proof is then completed with the following lemma (proved in Appendix~\ref{app:proofs_lemmas}):
\begin{lemma}\label{lem:prob_bound_for_globalnull}
    Fix any $m\geq 1$, and let $U_1,\dots,U_m\iidsim\textnormal{Unif}[0,1]$. Let $s_1,\dots,s_m\in[0,1]^m$ satisfy $\sum_{j=1}^m s_{i,j}\leq \alpha i$ for all $i\in[m]$. Then defining the events $\mathcal{U}_i(s_i)$ as above, 
    \[\PP{\mathcal{U}_1(s_1)\cup\dots\cup\mathcal{U}_m(s_m)}\leq \alpha + 2\alpha^2.\]
\end{lemma}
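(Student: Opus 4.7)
The plan is to decompose the union according to whether $\mathcal{U}_1(s_1)$ occurs, so that the two resulting contributions match the two terms of the target bound $\alpha + 2\alpha^2$. By subadditivity,
\[
\PP{\bigcup_{i=1}^m \mathcal{U}_i(s_i)} \;\leq\; \PP{\mathcal{U}_1(s_1)} + \PP{\bigcup_{i=2}^m \mathcal{U}_i(s_i)},
\]
and the first summand is immediately at most $\alpha$ by a union bound: $\PP{\mathcal{U}_1(s_1)} = \PP{\exists j: U_j\leq s_{1,j}} \leq \sum_j s_{1,j} \leq \alpha$.

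The main work is to bound the second summand by $2\alpha^2$. The key structural observation is that $\mathcal{U}_i(s_i)$ with $i \geq 2$ requires at least two of the $U_j$'s to be hit, so $\One{\mathcal{U}_i(s_i)} \leq \binom{N_i}{2}/\binom{i}{2}$. Taking expectations and using independence together with the compound p-value constraint,
\[
\EE{\binom{N_i}{2}} \;=\; \sum_{j<k}s_{i,j}s_{i,k} \;\leq\; \frac{\bigl(\sum_j s_{i,j}\bigr)^2}{2} \;\leq\; \frac{(\alpha i)^2}{2},
\]
which at the single level $i=2$ already yields $\PP{\mathcal{U}_2(s_2)}\leq 2\alpha^2$, matching the desired quadratic term exactly.

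The principal difficulty is that naively union-bounding the pair inequality over all $i\geq 2$ gives $\alpha^2\sum_{i\geq 2} i/(i-1)$, which diverges in $m$. Avoiding this requires selecting a single canonical witness for the union rather than summing level-by-level. A natural way to do so is to peel by $I^* := \min\{i\geq 2 : \mathcal{U}_i(s_i)\text{ holds}\}$ and write
\[
\PP{\bigcup_{i\geq 2}\mathcal{U}_i(s_i)} \;=\; \sum_{i_0\geq 2}\PP{\mathcal{U}_{i_0}(s_{i_0}) \cap \mathcal{U}_2^c(s_2) \cap \cdots \cap \mathcal{U}_{i_0-1}^c(s_{i_0-1})},
\]
and then exploit the fact that the negative events $\mathcal{U}_l^c(s_l)$ for $l<i_0$ tightly restrict how many of the $U_j$'s can fall below each earlier threshold $s_{l,\cdot}$. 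Combining these restrictions with the family of constraints $\sum_j s_{l,j}\leq \alpha l$ (for all $l$) should allow the per-level contributions to telescope into a single global $2\alpha^2$ bound. I expect this consolidation step---distilling the full family of compound p-value constraints into one quadratic bound, in a spirit analogous to the role of Lemma~\ref{lem:t_q_sum} in the proof of Theorem~\ref{thm:fdr_independent}---to be the crux of the argument. For $\alpha \geq 1/2$ the target $\alpha+2\alpha^2\geq 1$ and the conclusion is trivial, so one only needs to carry out the above analysis in the regime $\alpha < 1/2$.
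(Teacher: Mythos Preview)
Your opening step is already too lossy. Writing
\[
\PP{\bigcup_{i=1}^m \mathcal{U}_i(s_i)} \;\leq\; \PP{\mathcal{U}_1(s_1)} \;+\; \PP{\bigcup_{i=2}^m \mathcal{U}_i(s_i)}
\]
and then aiming for $\PP{\bigcup_{i\geq 2}\mathcal{U}_i(s_i)}\leq 2\alpha^2$ cannot succeed, because that second bound is false in general. For a concrete obstruction, take $m$ large and, for each level $i\geq 2$, place $s_i$ uniformly on a block of $k$ coordinates disjoint from all other blocks (so $s_{i,j}=\alpha i/k$ on that block and $0$ elsewhere). For large $k$ each $\mathcal{U}_i$ is close to $\{\textnormal{Pois}(\alpha i)\geq i\}$, and by disjointness the events are independent. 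At $\alpha=0.4$, levels $i=2,3,4$ alone already give $\PP{\bigcup_{i\geq 2}\mathcal{U}_i}\approx 0.345 > 0.32 = 2\alpha^2$; putting $s_1$ on yet another disjoint coordinate with $s_{1,1}=\alpha$ yields $\PP{\mathcal{U}_1}+\PP{\bigcup_{i\geq 2}\mathcal{U}_i}\approx 0.745 > \alpha+2\alpha^2=0.72$, even though the true union probability is only $\approx 0.607$. Your peeling by $I^*$ cannot rescue this, since it merely rewrites $\PP{\bigcup_{i\geq 2}\mathcal{U}_i}$ exactly and so can never produce a number smaller than that probability.

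The paper's proof retains $\mathcal{U}_1^c$ rather than discarding it: it uses the \emph{equality}
\[
\PP{\bigcup_{i=1}^m \mathcal{U}_i} \;=\; \PP{\mathcal{U}_1} + \PP{\mathcal{U}_1^c\cap\bigcup_{i\geq 2}\mathcal{U}_i}
\;\leq\; \alpha \;+\; \sum_{i\geq 2}\PP{\mathcal{U}_1^c\cap\mathcal{U}_i}.
\]
On $\mathcal{U}_1^c\cap\mathcal{U}_i$ every $U_j>s_{1,j}$, yet at least $i$ of the $U_j$ are $\leq s_{i,j}$; hence at least $i$ of them land in the gaps $(s_{1,j},s_{i,j}]$. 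These indicators are independent Bernoullis with total mean at most $\alpha(i-1)$ (one may take the constraints with equality without loss), so Hoeffding's Poisson--binomial inequality gives $\PP{\mathcal{U}_1^c\cap\mathcal{U}_i}\leq \PP{\textnormal{Pois}(\alpha(i-1))\geq i}$. The ``consolidation'' you anticipated is then the closed-form identity
\[
\sum_{k=1}^\infty \PP{\textnormal{Pois}(\alpha k)>k} \;=\; \frac{\tfrac{1}{2}\alpha^2}{(1-\alpha)^2},
\]
which is at most $2\alpha^2$ for $\alpha\leq 1/2$. No peeling by $I^*$, no pairwise inequality $\One{\mathcal{U}_i}\leq \binom{N_i}{2}/\binom{i}{2}$, and no telescoping is needed; the single intersection with $\mathcal{U}_1^c$ is what shifts each mean from $\alpha i$ down to $\alpha(i-1)$ and makes the plain union bound over $i\geq 2$ sum to the right order.
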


\section{Proofs of lower bounds on FDR}\label{app:proofs_constructions}
In this section, we verify our constructions establishing
lower bounds on the best possible FDR control results (Propositions~\ref{prop:fdr_example},~\ref{prop:fdr_example_globalnull}, and~\ref{prop:fdr_example_prds}).

\subsection{Proof of Proposition~\ref{prop:fdr_example}}
    Let $k=\frac{1}{2\alpha}$, and define the set of nulls as $\nulls = \{1,\dots,m-3k+2\}\subseteq[m]$. Let $p=(p_1,\dots,p_m)$ have the following distribution: 
    \begin{itemize}
        \item The distribution of the null $p_i$'s is given by
        \[p_1\sim\mathrm{Unif}\left\{\frac{1.5}{m},1\right\}, \quad p_2 = \frac{1}{m}, \quad p_3=\dots=p_{m-3k+2}= 1\]
        (note that, aside from $p_1$, the remaining $m-3k+1$ null $p_i$'s are deterministic).
        \item The non-null $p_i$'s are deterministic, with values
        \[p_{m-3k+3}=\dots=p_{m-k+1} = \frac{1}{m},\quad p_{m-k+2}=\dots=p_m=\frac{1.5}{m}.\]
    \end{itemize}
    The $p_i$'s are mutually independent by definition.
    To see that this construction satisfies the definition of compound p-values, it suffices to observe that
    \[\sum_{i\in\nulls}\PP{p_i\leq \frac{1}{m}} = \PP{p_2\leq \frac{1}{m}} =  1\]
    (since $p_i\leq \frac{1}{m}$ cannot occur for any $i\in\nulls\backslash\{2\}$), and
    \[\sum_{i\in\nulls}\PP{p_i\leq \frac{1.5}{m}}
    = \PP{p_1\leq \frac{1.5}{m}} + \PP{p_2\leq \frac{1.5}{m}} = \frac{1}{2}+1 = 1.5\]
        (since $p_i\leq \frac{1}{m}$ cannot occur for any $i\in\nulls\backslash\{1,2\}$).
        
    Moreover, we can calculate the FDR as follows. With probability $\frac{1}{2}$, we have $p_1 = \frac{1.5}{m}$, and a straightforward calculation shows that on this event,
    \[\Sbh = \left\{ i : p_i \leq \frac{\alpha \cdot 3k}{m}\right\} = \left\{i : p_i \leq \frac{1.5}{m}\right\} =  \{1,2\} \cup \nulls^c,\]
    for a total of $|\Sbh|=3k$ many rejected hypotheses, and so $\fdp(\Sbh) = \frac{2}{3k} = \frac{4}{3}\alpha$.
    Otherwise, with probability $\frac{1}{2}$ we have $p_1 = 1$, and a straightforward calculation shows that on this event,
    \[\Sbh = \left\{i: p_i \leq \frac{\alpha \cdot 2k}{m}\right\} = \left\{i : p_i \leq \frac{1}{m}\right\} = \{2\}\cup \{m-3k+3,\dots,m-k+1\},\]
    for a total of $|\Sbh|=2k$ many rejected hypotheses, and so $\fdp(\Sbh) = \frac{1}{2k} = \alpha$.
    Therefore, $\fdr = \EE{\fdp(\Sbh)} = \frac{1}{2}\cdot \frac{4}{3}\alpha + \frac{1}{2}\cdot \alpha = \frac{7}{6}\alpha$.
    
\subsection{Proof of Proposition~\ref{prop:fdr_example_globalnull}}
    Let $p=(p_1,\dots,p_m)$ have the following distribution: 
    \[p_1 = \begin{cases} \frac{\alpha}{m}, & \textnormal{ with probability $\alpha$},\\
    \frac{2\alpha}{m}, & \textnormal{ with probability $\frac{\alpha}{2}$},\\ 1, & \textnormal{ with probability $1-\frac{3\alpha}{2}$},\end{cases}\]
    and
    \[p_2 = \begin{cases} \frac{2\alpha}{m}, & \textnormal{ with probability $\frac{\alpha}{2}$},\\
    1, & \textnormal{ with probability $1-\frac{\alpha}{2}$,}\end{cases}\]
    with $p_1\independent p_2$, while $p_3 = \dots = p_m = 1$
    (note that, aside from $p_1,p_2$, the remaining $m-2$ $p_i$'s are deterministic).
    The $p_i$'s are mutually independent by definition.
    To see that this construction satisfies the definition of compound p-values, it suffices to observe that
    \[\sum_{i\in\nulls}\PP{p_i\leq \frac{\alpha}{m}} = \PP{p_1\leq \frac{\alpha}{m}} =  \alpha\]
    (since $p_i\leq \frac{\alpha}{m}$ cannot occur for any $i\in[m]\backslash\{1\}$)
    and
    \[\sum_{i\in\nulls}\PP{p_i\leq \frac{2\alpha}{m}} = \PP{p_1\leq \frac{2\alpha}{m}} + \PP{p_2\leq \frac{2\alpha}{m}}  =  \frac{3\alpha}{2} + \frac{\alpha}{2} = 2\alpha\]
        (since $p_i\leq \frac{2\alpha}{m}$ cannot occur for any $i\in[m]\backslash\{1,2\}$).
    Moreover, we can calculate the FDR as follows. The BH procedure rejects at least one hypothesis, i.e.~$\Sbh\neq\varnothing$, whenever
    either $p_1 = \frac{\alpha}{m}$, or $p_1 = p_2 = \frac{2\alpha}{m}$. Therefore,
    \[\fdr = \PP{\Sbh\neq\varnothing} 
    = \PP{p_1 =\frac{\alpha}{m}} + \PP{p_1 = p_2 = \frac{2\alpha}{m}}= \alpha + \frac{\alpha}{2}\cdot \frac{\alpha}{2} = \alpha + \frac{1}{4}\alpha^2,
    \]
    where the first step holds since, under the global null, $\fdp(\Sbh) = \One{\Sbh\neq \varnothing}$.
    
\subsection{Proof of Proposition~\ref{prop:fdr_example_prds}}
    Let $L$ be the largest integer such that $\frac{L(L+1)}{2}\leq m$ (note that $L\approx \sqrt{2m}$), and let $\mathsf{h}_L = 1 + \frac{1}{2}+\dots + \frac{1}{L}$. We can verify that $\mathsf{h}_L\geq \mathsf{h}_m/2$ for any $m\geq 1$. Let $\alpha' = \min\{\alpha, \frac{1}{\mathsf{h}_m}\}$.

    Next we define the distribution of $p=(p_1,\dots,p_m)$. First, we partition $[m]$ into bins, $[m] = B_1 \cup \dots \cup B_L \cup B_*$, where 
    \[B_1 = \{1\}, B_2 = \{2,3\}, B_3 = \{4,5,6\}, \dots,\]
    that is, $|B_\ell| =\ell$ for each $\ell\in[L]$ (and $|B_*| = m - \frac{L(L+1)}{2}$, i.e., the last bin $B^*$ contains any remaining indices). Let $U_1,\dots,U_L\iidsim \textnormal{Unif}[0,1]$, and define
    \[p_i = \frac{\alpha \ell}{m} \cdot \One{U_\ell \leq \frac{\alpha'}{\ell}} + 1 \cdot \One{U_\ell > \frac{\alpha'}{\ell}}\textnormal{ for all $i\in B_\ell$},\]
    for each $\ell\in[L]$, and $p_i=1$ for all $i\in B_*$.
    Note that in this construction, within the $\ell$th bin $B_\ell$, there are $\ell$ many $p_i$, which are all defined to be equal; across the bins, the draws are mutually independent. By construction, this joint distribution satisfies the PRDS assumption~\eqref{eqn:PRDS}.\footnote{We recall from Section~\ref{sec:background_bh} that \citet{guo2008control} derive a construction for verifying that, for arbitrarily dependent p-values, $\alpha\mathsf{h}_m$ is the tightest possible bound on FDR (that is, without a PRDS condition). Readers familiar with that work may note similarities with the construction used here in the proof of Proposition~\ref{prop:fdr_example_prds}. These similarities are interesting since the settings are quite different: as compared to \citet{guo2008control}'s result, here the $p_i$'s are required to be compound p-values (a weaker constraint than requiring p-values), but are assumed to have PRDS dependence (a stronger constraint than arbitrary dependence); these two differences seem to effectively cancel out to allow for the same type of lower bound to hold.} 
    
    To verify that these $p_i$'s are compound p-values, it suffices to calculate, for each $\ell\in[L]$, 
    \begin{multline*}\sum_{i=1}^m\PP{p_i\leq \frac{\alpha \ell}{m}}
    = \sum_{\ell'=1}^L \sum_{i\in B_{\ell'}} \PP{p_i\leq \frac{\alpha \ell}{m}}
    =\sum_{\ell'=1}^L \ell' \cdot \PP{U_{\ell'}\leq \frac{\alpha'}{\ell'}}\cdot \One{\frac{\alpha \ell'}{m} \leq \frac{\alpha \ell}{m}} \\
    = \sum_{\ell'=1}^L \ell' \cdot \frac{\alpha'}{\ell'} \cdot \One{\ell'\leq \ell} = \alpha \ell,\end{multline*}
    where for the next-to-last step, we assume $\alpha>0$ to exclude the trivial case.
    
    Next, to calculate the FDR, we have $\fdr = \PP{\Sbh\neq \varnothing}$ since we are working under the global null, $\nulls=[m]$. Moreover, for any $\ell\in[L]$, if $U_\ell\leq \frac{\alpha'}{\ell}$ then $p_i = \frac{\alpha \ell}{m}$ for all $i\in B_\ell$, i.e., for $\ell$ many $p_i$'s---and therefore, on this event, $\Sbh\supseteq B_\ell$, meaning that each hypothesis corresponding to an index in the $\ell$th bin is rejected. Consequently,
    \[\fdr = \PP{\Sbh\neq \varnothing} \geq \PP{U_\ell\leq \frac{\alpha'}{\ell}\textnormal{ for some $\ell\in[L]$}} = 1 - \prod_{\ell=1}^L \PP{U_\ell > \frac{\alpha'}{\ell}},\]
    where the last step holds since $U_1,\dots,U_L$ are mutually independent. And, we can calculate
    \[\prod_{\ell=1}^L \PP{U_\ell > \frac{\alpha'}{\ell}} = \prod_{\ell=1}^L \left( 1- \frac{\alpha'}{\ell}\right) \leq e^{ - \sum_{\ell=1}^L \frac{\alpha'}{\ell}} = e^{-\alpha' \mathsf{h}_L}
    \leq e^{-\alpha' \mathsf{h}_m/2} \leq 
     1 - \frac{\alpha'\mathsf{h}_m}{2} + \frac{(\alpha'\mathsf{h}_m)^2}{8},
    \]
    where the next-to-last step holds since $\mathsf{h}_L\geq \mathsf{h}_m/2$.
    Therefore,
    \[\fdr = \EE{\fdp(\Sbh)} \geq \frac{\alpha'\mathsf{h}_m}{2} - \frac{(\alpha'\mathsf{h}_m)^2}{8},\]
    which completes the proof by our definition of $\alpha'=\min\{\alpha,\frac{1}{\mathsf{h}_m}\}$.

\section{Proofs for examples}\label{app:proofs_examples}
In this section we present proofs for all of the results stated in Section~\ref{sec:examples}, for the various examples of settings where compound p-values arise.

\subsection{Proofs for Example~\ref{example:empirical_Bayes}: empirical Bayes}

\begin{proof}[Proof of Proposition~\ref{prop:average_null_CDFs}]
To prove the first claim, define $p(y) = \frac{1}{m}\sum_{j\in\nulls} F_j(y)$, so that $p_i \geq p(X_i)$ almost surely for each $i\in\nulls$. Note that $y\mapsto p(y)$ is nondecreasing (since each $F_j$ is a CDF and is therefore nondecreasing).

For a fixed $t \in [0,1]$, let $y^* = \sup\{y:p(y)\leq t\}$. We now split into cases. First, if $p(y^*)\leq t$, then 
\[p(y)\leq t \ \Longleftrightarrow \ y\leq y^*,\] and so
\[\sum_{i\in\nulls} \PP{p_i\leq t} \leq \sum_{i\in\nulls}\PP{p(X_i)\leq t}= 
\sum_{i\in\nulls} \PP{X_i\leq y^*} = 
\sum_{i\in\nulls} F_i(y^*)= m\cdot p(y^*) \leq mt.\]
If instead $p(y^*)>t$, then  
\[p(y)\leq t \ \Longleftrightarrow \ y< y^*,\] and so
\begin{multline*}\sum_{i\in\nulls} \PP{p_i\leq t} \leq\sum_{i\in\nulls}\PP{p(X_i)\leq t}= 
\sum_{i\in\nulls} \PP{X_i < y^*} = 
\sum_{i\in\nulls} \lim_{y\nearrow y^*} \PP{X_i \leq y} \\= 
\sum_{i\in\nulls} \lim_{y\nearrow y^*} F_i(y) = m\cdot \lim_{y\nearrow y^*}  p(y) \leq mt.\end{multline*}
The second claim (with $\bar{F}_j$'s in place of $F_j$'s) is proved analogously.
\end{proof}

\subsection{Illustration and proofs for Example~\ref{example:decreasing_densities}: decreasing densities}\label{app:decreasing_density_appendix}

First, we illustrate the construction of the approximate compound p-values $p_i$, with a simulated example. We set $m=10{,}000$, with $|\nulls|=9{,}000$ true nulls. We draw $X_i \sim |\mathcal{N}(0,1)|$ for $i\in\nulls$ (i.e., the absolute value of a draw from the standard normal distribution), and $X_i \sim |\mathcal{N}(10,2)|$ for $i\not\in\nulls$, independently. Figure~\ref{fig:decreasing_density_sim} shows the resulting values $p_i$, calculated as in~\eqref{eqn:define_p_i_decreasing_densities}. We observe that there are a large number of very small p-values, corresponding to the $X_i$ values not drawn from the null.

\begin{figure}[t]
    \includegraphics[width=\textwidth]{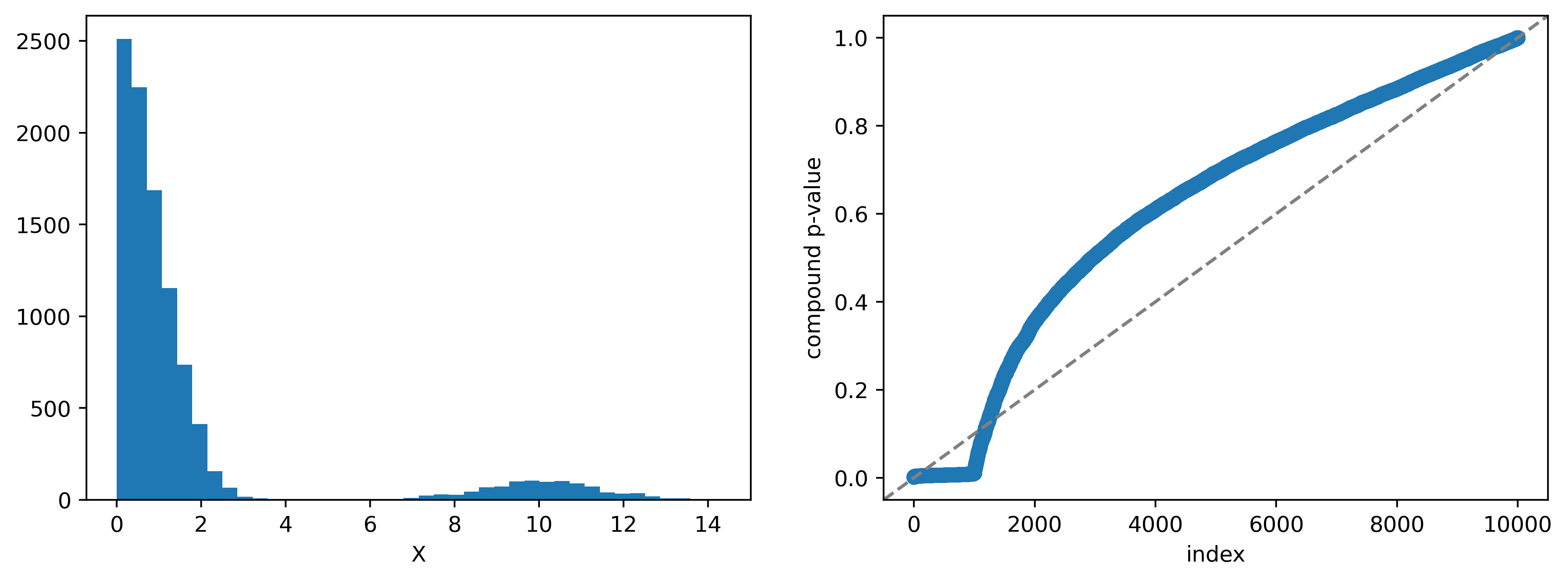}
    \caption{An illustration of the approximate compound p-values constructed for Example~\ref{example:decreasing_densities} (decreasing densities). The left panel shows a histogram of the observed data values $X_i$. The right panel shows the values $p_1,\dots,p_m$ sorted in increasing order (with a dashed line indicating the diagonal---i.e., uniformly distributed p-values would cluster near the dashed line). See Appendix~\ref{app:decreasing_density_appendix} for details.}
    \label{fig:decreasing_density_sim}
\end{figure}

We next present the proofs of the results for this example.
\begin{proof}[Proof of Proposition~\ref{prop:decreasing_density}]
    First, for each $j\in\nulls$, let $\bar{F}_j$ be the right-tail CDF of the distribution $P_j$, i.e., $\bar{F}_j(x)=\PP{X_j\geq x}$. Define
    \[p^*_i = g^*(X_i)\textnormal{ where }g^*(x) = \frac{1}{m}\sum_{j\in\nulls}\bar{F}_j(x).\]
    By Proposition~\ref{prop:average_null_CDFs}, $p^*_1,\dots,p^*_m$ are compound p-values. 

    Next, observe that on the event that $g^*\in\mathcal{G}(\Delta)$, for each $i\in[m]$ we have
    \[p_i \geq p^*_i,\]
    and consequently, for any $t\in[0,1]$,
    \begin{align*}
        \sum_{i\in\nulls}\PP{p_i\leq t}
        &\leq \sum_{i\in\nulls}\left(\PP{p^*_i\leq t} + \PP{g^*\not\in\mathcal{G}(\Delta)}\right)\\
        &\leq m\left(t + \PP{g^*\not\in\mathcal{G}(\Delta)}  \right),
    \end{align*}
    where for the last step we use the fact that $p^*_1,\dots,p^*_m$ are compound p-values.
    Finally, $g^*$ is a convex and nonincreasing function (since each $\bar{F}_j$ is a right-tail CDF of a distribution with nonincreasing density, and is therefore itself convex and nonincreasing). We then have $g^*\in\mathcal{G}(\Delta)$ if and only if 
    $\max_{i=0,\dots,m} \{g^*(X_{(i)}) - g^*(X_{(i+1)})\}\leq \Delta$. Applying Lemma~\ref{lem:largest_gap_} verifies that this holds with probability $\geq 1-\frac{1}{m}$, which completes the proof.
\end{proof}

\begin{lemma}\label{lem:largest_gap_}
    Under the notation and assumptions of Proposition~\ref{prop:decreasing_density}, 
    \[\PP{\max_{i=0,\dots,m} \{g^*(X_{(i)}) - g^*(X_{(i+1)})\} \leq \Delta}\geq 1- \frac{1}{m}.\]
\end{lemma}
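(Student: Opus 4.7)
Set $V_j := g^*(X_j)$ for $j \in [m]$; since $g^*$ is nonincreasing, the quantity $\max_i\{g^*(X_{(i)})-g^*(X_{(i+1)})\}$ is exactly the maximum spacing in the sorted multiset $\{V_1,\ldots,V_m\}\cup\{0, g^*(0)\}$. The structural identity that drives the argument is that, by telescoping using $g^* = \frac{1}{m}\sum_{k\in\nulls}\bar{F}_k$,
\[\sum_{k\in\nulls}\PP{V_k \in (v, v+\Delta]} = m\Delta\]
for every $v$ with $v + \Delta \leq g^*(0)$. Moreover, the $V_k$'s are mutually independent since the $X_k$'s are.

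The first step is a \emph{reduction to a finite union bound}. I would argue that if the maximum spacing exceeds $\Delta$, then the offending gap $(a,b)$ has left endpoint $a$ equal either to some $V_j$ (when the gap is interior or touches $g^*(0)$) or to $0$ (when it touches $0$); since $a+\Delta < b$, the subinterval $(a, a+\Delta]$ is empty of $V_k$'s. So it suffices to union-bound $\PP{(v, v+\Delta] \text{ is empty of } V_k\text{'s}}$ over the $(m+1)$-element random set $v \in \{V_1,\ldots,V_m,0\}$.

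For each such $v$, I would condition on $V_j$ (when $v=V_j$) and use independence to factor the probability as $\prod_{k\neq j}(1 - \PP{V_k \in (v,v+\Delta]})$, then apply $1-x\leq e^{-x}$ together with the key identity. For $v=0$ or $v=V_j$ with $j\notin\nulls$, the full mass $m\Delta$ enters the exponent, giving at most $e^{-m\Delta} = 1/(em^2)$. For $v=V_j$ with $j\in\nulls$, dropping the $j$-th summand leaves $m\Delta - h_j(V_j)$ in the exponent, with $h_j(v) := \PP{V_j \in (v,v+\Delta]}$, producing the bound $e^{-m\Delta}\,\EE{e^{h_j(V_j)}}$. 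The \emph{main technical obstacle} is controlling this correction factor sharply enough: I would use the chord inequality $e^x \leq 1 + (e-1)x$ valid on $[0,1]$, together with the symmetry bound $\EE{h_j(V_j)} = \PP{V_j^{(1)} < V_j^{(2)} \leq V_j^{(1)} + \Delta} \leq \tfrac{1}{2}$ for two iid copies, which yields $\EE{e^{h_j(V_j)}} \leq (e+1)/2$ and hence at most $(e+1)/(2em^2)$ per null term.

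Summing the $m+1$ contributions gives
\[\frac{|\nulls|(e+1)/2 + (m-|\nulls|+1)}{em^2} \;\leq\; \frac{m(e+1)/2 + 1}{em^2} \;=\; \frac{e+1}{2em} + \frac{1}{em^2},\]
which is at most $1/m$ for every $m \geq 2$ since $(e+1)/(2e) \approx 0.684 < 1$; the remaining small cases (in which $\Delta \geq g^*(0)$) are automatic, since then the maximum gap cannot exceed $g^*(0) \leq 1 \leq \Delta$.
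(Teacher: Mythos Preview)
Your argument is correct, and it takes a genuinely different route from the paper's proof. The paper introduces an auxiliary uniform random variable $U\sim\textnormal{Unif}[0,(|\nulls|-2\log m)/m]$ and bounds the probability of the event $\mathcal{E}=\{(U,U+\tfrac{2\log m}{m})\text{ contains no }g^*(X_j)\}$ in two ways: unconditionally $\PP{\mathcal{E}}\leq 1/m^2$ by independence and the same telescoping identity you use, while conditionally on the data, if the maximum gap exceeds $\Delta$ then $U$ falls in that gap (and hence $\mathcal{E}$ occurs) with probability at least $1/m$. Combining yields $\PP{\text{max gap}>\Delta}\leq 1/m$.

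Your approach is instead a direct union bound over the $m+1$ candidate left endpoints of the offending gap. The main technical wrinkle that the paper's auxiliary-randomization sidesteps---the ``self-interaction'' when the anchor $V_j$ is itself a null---you handle with the chord inequality $e^x\leq 1+(e-1)x$ on $[0,1]$ together with the symmetry bound $\EE{h_j(V_j)}\leq 1/2$, which is a nice touch. Your method is arguably more elementary (no extra randomness) and in fact yields a slightly sharper constant, roughly $(e+1)/(2e)\approx 0.684$ times $1/m$; the paper's device, on the other hand, avoids any case analysis between null and non-null anchors and keeps the computation shorter. Either way, both rely on exactly the same structural identity $\sum_{k\in\nulls}\PP{V_k\in(v,v+\Delta]}=m\Delta$, and both need the observation that whenever the gap exceeds $\Delta$ the relevant interval lies entirely within $(0,g^*(0)]$ so that this identity applies.
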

\begin{proof}[Proof of Lemma~\ref{lem:largest_gap_}]
    First, since each $\bar{F}_j$ is the right-tail CDF of a continuous distribution, $g^*:[0,\infty)\to[0,1]$ is a continuous and nonincreasing function, with $g^*(0)=\frac{|\nulls|}{m}$ and $\lim_{x\to\infty}g^*(x)=0$. Next, let $M=\sup\{x:g^*(x)>0\}$ (note that we may have $M=+\infty$). For each $j\in\nulls$, since $P_j$ has a nonincreasing density, this means that $\bar{F}_j$ is nonincreasing and convex, and therefore its derivative $\bar{F}'_j$ is nonpositive and nondecreasing. Therefore, the derivative $g^*{}'(x) = \frac{1}{m}\sum_{j\in\nulls}\bar{F}_j'(x)$ is a nonpositive and nondecreasing function, and so $g^*$ is strictly decreasing on $[0,M)$. From this point on we will view $g^*$ as a function on this domain, $g^*:[0,M)\to(0,\frac{|\nulls|}{m}]$, and will write $g^*{}^{-1}:(0,\frac{|\nulls|}{m}]\to[0,M)$ as its inverse, which is therefore also strictly decreasing.

    Now let $U\sim \textnormal{Unif}\left[0,\frac{|\nulls|-2\log m}{m}\right]$ be a uniform random variable, independent of $X_1,\ldots,X_m$. (Note that we can assume $\Delta\leq \frac{|\nulls|}{m}$, since otherwise the result is trivial, and so $|\nulls|-2\log m = |\nulls| - m\Delta + 1>0$.)
    Define the event $\mathcal{E}$ as
    \[\mathcal{E} = \left\{ \sum_{j=1}^m\One{g^*(X_j) \in\left(U,U+\frac{2\log m}{m}\right)} = 0\right\}.\] 
    Note that, for each $j\in\nulls$,
    \begin{multline*}\PPst{g^*(X_j) \in\left(U,U+\frac{2\log m}{m}\right)}{U}
    = \PPst{g^*{}^{-1}\left(U+\frac{2\log m}{m}\right) < X_j < g^*{}^{-1}(U)}{U}\\
    = \bar{F}_j\left(g^*{}^{-1}\left(U+\frac{2\log m}{m}\right)\right) - \bar{F}_j(g^*{}^{-1}(U)).\end{multline*}
    Therefore,
    \begin{align*}
        \PPst{\mathcal{E}}{U}
        &\leq \PPst{\sum_{j\in\nulls}\One{g^*(X_j) \in\left(U,U+\frac{2\log m}{m}\right)}=0}{U}\\
        &= \prod_{j\in\nulls}\PPst{g^*(X_j) \not\in\left(U,U+\frac{2\log m}{m}\right)}{U}\textnormal{ by independence of $X_1,\dots,X_m,U$}\\
        &= \prod_{j\in\nulls} \left(1 -  \bar{F}_j\left(g^*{}^{-1}\left(U+\frac{2\log m}{m}\right)\right) - \bar{F}_j(g^*{}^{-1}(U))\right) \\
        &\leq \exp\left\{ - \sum_{j\in\nulls} \left(\bar{F}_j\left(g^*{}^{-1}\left(U+\frac{2\log m}{m}\right)\right) - \bar{F}_j(g^*{}^{-1}(U))\right)\right\}\\
        &=\exp\left\{- m g^*\left(g^*{}^{-1}\left(U+\frac{2\log m}{m}\right)\right) + mg^*(g^*{}^{-1}(U))\right\} 
         = e^{-m \cdot \frac{2\log m}{m}} = \frac{1}{m^2},
    \end{align*}
    where the last line holds since, by definition of $g^*$,
    \[\sum_{j\in\nulls} \bar{F}_j(g^*{}^{-1}(t)) = mg^*(g^*{}^{-1}(t)) = mt\]
    for any $t\in(0,\frac{|\nulls|}{m}]$. Therefore, after marginalizing over $U$, we have $\PP{\mathcal{E}}\leq \frac{1}{m^2}$.
    
    Next, after observing the data $X_1,\dots,X_m$, suppose that $g^*(X_{(i)}) - g^*(X_{(i+1)})>\Delta$ for some $i$. Then conditional on the data,     
    \begin{multline*}\PPst{U\in \left[g^*(X_{(i+1)}),g^*(X_{(i)})-\frac{2\log m}{m}\right]}{X_1,\dots,X_m} \\ = \frac{\left(g^*(X_{(i)})-\frac{2\log m}{m}\right) - g^*(X_{(i+1)}) }{\frac{|\nulls|-2\log m}{m}} > \frac{\Delta - \frac{2\log m}{m}}{\frac{|\nulls|-2\log m}{m}} = \frac{1}{|\nulls|-2\log m} \geq \frac{1}{m},\end{multline*}
    by definition of $\Delta = \frac{\log(em^2)}{m} = \frac{1 + 2\log m}{m}$.
    And,
    if the random variable $U$ falls into the interval $[g^*(X_{(i+1)}),g^*(X_{(i)})-\frac{2\log m}{m}]$, then since no data points can lie between $X_{(i)}$ and $X_{(i+1)}$, for each $j\in[m]$ we have
    \[X_j \not \in (X_{(i)},X_{(i+1)}) \ \Longrightarrow \ g^*(X_j) \not \in (g^*(X_{(i+1)}),g^*(X_{(i)})) \ \Longrightarrow \ g^*(X_j) \not \in \left(U,U+\frac{2\log m}{m}\right),\]
    and therefore the event $\mathcal{E}$ occurs. We have thus shown that
    \[\PPst{\mathcal{E}}{X_1,\dots,X_m} \geq \frac{1}{m} \cdot \One{\max_{i\in\{0,\dots,m\}}\{g^*(X_{(i)}) - g^*(X_{(i+1)})\}>\Delta}.\]
    Combining our calculations, then,
    \[\frac{1}{m}\cdot \PP{\max_{i\in\{0,\dots,m\}}\{g^*(X_{(i)}) - g^*(X_{(i+1)})\}>\Delta} \leq \PP{\mathcal{E}}\leq \frac{1}{m^2},\]
    which completes the proof.
\end{proof}

\begin{proof}[Proof of Proposition~\ref{prop:decreasing_density_compute}]
    
    Fix any $g\in\mathcal{G}(\Delta)$.
    First we will show that
    \begin{equation}\label{eqn:leq_prop:decreasing_density_compute}g(X_{(i)})\leq \sum_{j=i}^m w_j\end{equation} for all $i=1,\dots,m$, by induction. At $i=m$, we have
    \[g(X_{(m)}) = g(X_{(m)}) - g(+\infty) = g(X_{(m)})-g(X_{(m+1)}) \leq \Delta = w_m,\]
    where inequality holds by definition of $\mathcal{G}(\Delta)$. 
    
    Next fix any $i<m$ and assume that the claim~\eqref{eqn:leq_prop:decreasing_density_compute} holds with $i+1$ in place of $i$. We will split into cases. First, suppose that $w_i = \frac{1-\sum_{j=i+1}^m w_j}{\frac{X_{(i+1)}}{X_{(i+1)} - X_{(i)}}}$. By convexity of $g$, it holds that
    \[\frac{g(X_{(i)}) - g(X_{(i+1)})}{X_{(i+1)}-X_{(i)}} \leq \frac{g(X_{(0)}) - g(X_{(i+1)})}{X_{(i+1)}-X_{(0)}} = \frac{g(X_{(0)}) - g(X_{(i+1)})}{X_{(i+1)}} \leq \frac{1 - g(X_{(i+1)})}{X_{(i+1)}},\]
    and so
    \begin{align*}
        g(X_{(i)})
        &\leq g(X_{(i+1)}) + \left(1 - g(X_{(i+1)})\right)\cdot \frac{X_{(i+1)}-X_{(i)}}{X_{(i+1)}}\\
        &= g(X_{(i+1)})\cdot \frac{X_{(i)}}{X_{(i+1)}} +  \frac{X_{(i+1)}-X_{(i)}}{X_{(i+1)}}\\
        &\leq\sum_{j=i+1}^m w_j\cdot \frac{X_{(i)}}{X_{(i+1)}} +  \frac{X_{(i+1)}-X_{(i)}}{X_{(i+1)}}\textnormal{\quad by induction}\\
        &=\sum_{j=i+1}^m w_j + \left( 1- \sum_{j=i+1}^m w_j\right)\cdot \frac{X_{(i+1)}-X_{(i)}}{X_{(i+1)}}\\
        &=\sum_{j=i}^m w_j.
    \end{align*}
    
    Next, we instead consider the case that $w_i = \frac{\Delta}{\frac{X_{(j+1)} - X_{(j)}}{X_{(i+1)} - X_{(i)}}}$ for some $j\in\{0,\dots,i\}$. 
    We write
    \[g(X_{(i)}) = g(X_{(i)}) - g(X_{(i+1)}) + g(X_{(i+1)}) \leq g(X_{(i)}) - g(X_{(i+1)}) + \sum_{j=i+1}^m w_j, \]
    where the  inequality holds by induction. And, by convexity of $g$, since $j\leq i$ it holds that
    \[\frac{g(X_{(i)}) - g(X_{(i+1)})}{X_{(i+1)}-X_{(i)}} \leq \frac{g(X_{(j)}) - g(X_{(j+1)})}{X_{(j+1)}-X_{(j)}} \leq \frac{\Delta}{X_{(j+1)}-X_{(j)}},\]
    where the last step holds by definition of $\mathcal{G}(\Delta)$. Consequently,
    \[g(X_{(i)}) \leq \frac{\Delta}{\frac{X_{(j+1)} - X_{(j)}}{X_{(i+1)} - X_{(i)}}}+ \sum_{j=i+1}^m w_j = \sum_{j=i}^m w_j. \]
    We have therefore shown that~\eqref{eqn:leq_prop:decreasing_density_compute} holds for all $i\in[m]$ and for any $g\in\mathcal{G}(\Delta)$, and consequently,
    \[\sup_{g\in\mathcal{G}(\Delta)}g(X_{(i)})\leq \sum_{j=i}^m w_j\textnormal{ for all $i\in[m]$}.\]

    Now we prove the converse. 
    Define a function $g:[0,\infty)\to\R$, as follows: the function will interpolate linearly between the points
    \[(0, w_0 + \dots + w_m), \ (X_{(1)}, w_1 + \dots + w_m), \ \dots, \ (X_{(m)}, w_m),\]
    and then we will set $g(x)=w_m$ for all $x\geq X_{(m)}$. We will now show that $g\in\mathcal{G}(\Delta)$. We have $w_i\geq 0$ and $\sum_{i=0}^m w_i \leq 1$ by definition of the $w_i$'s (specifically, this is true by induction---for each $i$, $w_i \leq \frac{1 - \sum_{j=i+1}^m w_j}{\frac{X_{(i+1)}}{X_{(i+1)}-X_{(i)}}}\leq 1 - \sum_{j=i+1}^m w_j$). Therefore we see that $g$ takes values in $[0,1]$ and is nonincreasing, by construction. And, for each $i=0,\dots,m$,
    \[g(X_{(i)}) - g(X_{(i+1)}) = w_i \leq \Delta.\]
    Finally, to verify convexity of $g$, by construction of $g$ we only need to check that
    \[\frac{w_i}{X_{(i+1)}-X_{(i)}} \leq \frac{w_{i-1}}{X_{(i)}-X_{(i-1)}}\]
    for each $i=1,\dots,m-1$.  We again split into two cases. If $w_{i-1}=\frac{1-\sum_{j=i}^m w_j}{ \frac{X_{(i)}}{X_{(i)} - X_{(i-1)}}} $, then 
    \begin{align*}
        \frac{w_{i-1}}{X_{(i)}-X_{(i-1)}}
        &= \frac{1-\sum_{j=i}^m w_j}{X_{(i)}}\\
        &= \frac{1-\sum_{j=i+1}^m w_j - w_i}{X_{(i)}}\\
        &\geq \frac{1-\sum_{j=i+1}^m w_j - \frac{1-\sum_{j=i+1}^m w_j}{\frac{X_{(i+1)}}{X_{(i+1)}-X_{(i)}}}}{X_{(i)}}\\
        &= \frac{\left(1-\sum_{j=i+1}^m w_j\right) \cdot \left( 1- \frac{1}{\frac{X_{(i+1)}}{X_{(i+1)}-X_{(i)}}}\right)}{X_{(i)}}\\
        &=\frac{1-\sum_{j=i+1}^m w_j}{X_{(i+1)}}
        \geq \frac{w_i}{X_{(i+1)}-X_{(i)}},
    \end{align*}
    where both inequalities hold by definition of $w_i$.
    If not, then instead we must have $w_{i-1} = \frac{\Delta}{\frac{X_{(j+1)}-X_{(j)}}{X_{(i)}-X_{(i-1)}}}$ for some $j\leq i-1$. Then
    \[\frac{w_{i-1}}{X_{(i)}-X_{(i-1)}} = \frac{\Delta}{X_{(j+1)}-X_{(j)}} = \frac{1}{X_{(i+1)} - X_{(i)}}\cdot \frac{\Delta}{ \frac{X_{(j+1)} - X_{(j)}}{X_{(i+1)} - X_{(i)}}} \geq \frac{1}{X_{(i+1)} - X_{(i)}}\cdot w_i,\]
    where again the last step holds by definition of $w_i$.
\end{proof}

\subsection{Proofs for Example~\ref{example:permutation_tests}: permutation tests}

\begin{proof}[Proof of Proposition~\ref{prop:perm_test_compound}]
Fix any $i\in\nulls$. Since $X_{i1},\dots,X_{in_i}$ are i.i.d., and are independent of $\{X_j\}_{j\in[m]\backslash\{i\}}$, it holds that $X_{i1},\dots,X_{in_i}$ are i.i.d.\ conditionally on $\{\widehat{P}_j\}_{j\in[m]\backslash\{i\}}$. Moreover, since $\widehat{P}_i$ is a symmetric function of the $n_i$ random variables $X_{i1},\dots,X_{in_i}$ (that is, the empirical distribution $\widehat{P}_i$ is invariant to permuting these $n_i$ many values), it holds that $X_{i1},\dots,X_{in_i}$ are exchangeable conditionally on $\widehat{P}_i$ (and on $\{\widehat{P}_j\}_{j\in[m]\backslash \{i\}}$). In particular, we have
\[X_i \mid \{\widehat{P}_j\}_{j\in[m]} \ \eqd X_i^\sigma \mid \{\widehat{P}_j\}_{j\in[m]},\]
for each $i\in\nulls$ and each $\sigma\in\mathcal{S}_{n_i}$. This implies that
$X_i \mid \{\widehat{P}_j\}_{j\in[m]} \ \eqd X_i^\sigma \mid \{\widehat{P}_j\}_{j\in[m]}$
also holds when $\sigma$ is a \emph{random} permutation, sampled uniformly from $\sigma\in\mathcal{S}_{n_i}$.
Consequently, the function
\[\bar{F}_i(t)= \frac{1}{n_i!}\sum_{\sigma\in\mathcal{S}_{n_i}}\One{T_i(X^\sigma_i)\geq t}\]
is equal to the right-tailed (conditional) CDF of the null distribution of $T_i(X_i)$, i.e., 
\[\bar{F}_i(t) = \PPst{T_i(X_i) \geq t}{\{\widehat{P}_j\}_{j\in[m]}}.\]
Observe that
\[p_i = \frac{1}{m}\sum_{j=1}^m \bar{F}_j(T_i(X_i)),\]
by construction.
It then follows immediately from Proposition~\ref{prop:average_null_CDFs} (applied after conditioning on $\{\widehat{P}_j\}_{j\in[m]}$, and with $T_i(X_i)$ in place of $X_i$) that $p_1,\dots,p_m$ are independent compound p-values, conditionally on $\{\widehat{P}_j\}_{j\in[m]}$.
Finally, we prove the marginal claim: we have
\[\sum_{i\in\nulls}\PP{p_i\leq t}  = \EE{\sum_{i\in\nulls}\PPst{p_i\leq t}{\{\widehat{P}_j\}_{j\in[m]}}} \leq \EE{mt} = mt,\]
where the inequality holds since $p_1,\dots,p_m$ are compound p-values conditionally on $\{\widehat{P}_j\}_{j\in[m]}$.
\end{proof}

\subsection{Proofs for Example~\ref{example:MC_pvalues}: Monte Carlo p-values}

\begin{proof}[Proof of Proposition~\ref{prop:MC_compound}]
This proof is very similar to the proof of Proposition~\ref{prop:perm_test_compound}.
Fix any $i\in\nulls$. By an identical argument, it holds that $X_i,X_{i1},\dots,X_{iK}$ are exchangeable conditionally on $\{\widehat{P}_j\}_{j\in[m]}$. Writing $X_{i0}=X_i$, it therefore holds that 
\[X_i \mid\{\widehat{P}_j\}_{j\in[m]} \ \eqd \ X_{iL} \mid \{\widehat{P}_j\}_{j\in[m]}, \]
where $L\sim\textnormal{Unif}(\{0,1,\dots,K\})$, and therefore
\[\bar{F}_i(t)= \frac{1}{1+K}\left\{ \One{T_i(X_i;\widehat{P}_i)\geq t} + \sum_{k=1}^K \One{T_i(X_{ik};\widehat{P}_i)\geq t}\right\}\]
is equal to the right-tailed (conditional) CDF of the null distribution of $T_i(X_i;\widehat{P}_i)$. We can also verify that
\[p_i = \frac{1}{m}\sum_{j=1}^m \bar{F}_j(T_i(X_i;\widehat{P}_i)),\]
and the result then follows by applying Proposition~\ref{prop:average_null_CDFs} as in the proof of Proposition~\ref{prop:perm_test_compound}.
\end{proof}

\section{Additional examples}
\label{app:additional_examples}

In this section we present several additional examples of settings where compound p-values may arise. 

\refstepcounter{examplesettings}\label{example:data_alignment}
\subsection{Example \theexamplesettings: data alignment}
Consider the problem of uncertain alignment between different datasets: for instance, given access to two databases, we would like to identify entries that correspond to the same individual, and may want to flag entries in one database that do not appear to correspond to any individual in the other.

To fix a concrete version of this question, suppose we have the following:
\begin{itemize}
    \item In the first dataset, we have unlabeled data: a feature vector $X_i\in\R^d$ for each individual $i\in[m]$.
    \item In the second dataset, we have labeled data points of the form $(\tilde{X}_i,Y_i)$, where $\tilde{X}_i$ is a noisy version of the individual's feature vector, and $Y_i$ is a label.
\end{itemize}
For example, if the label $Y_i$ records sensitive information, then $\tilde{X}_i$ might be a privatized version of the true feature vector for that individual, so that they cannot be easily identified in the sensitive labeled dataset.
Naturally, for the sake of privacy, we do not know how the entries of the two datasets correspond to each other---that is, $\tilde{X}_i$ is a noisy version of $X_j$, for some other index $j$ not necessarily equal to $i$.
This framework is related to the problem of regression with an unknown permutation, studied by, e.g., \citet{pananjady2017linear,unnikrishnan2018unlabeled,pananjady2022isotonic}.

Suppose that we now would like to identify whether any entries of the labeled dataset do not correspond to any individual in the unlabeled dataset. We define the $i$th null hypothesis as
\[H_{0,i}: \ \tilde{X}_i  = X_{\pi(i)} + \omega_i,\] where $\omega_i \iidsim P$ for a known distribution $P$, i.e., the features are privatized by adding noise drawn from $P$, independently of the $X_i$'s. Here $\pi$ is a fixed and unknown permutation that specifies the correspondence between the individuals in the two datasets. In other words, rejecting $H_{0,i}$ means that we conclude the entry $(\tilde{X}_i,Y_i)$ in the labeled dataset does not correspond to any individual in the unlabeled dataset.
Since $\pi$ is unknown, we do not have enough information to construct a p-value for testing an individual data entry $\tilde{X}_i$. Instead, we can construct compound p-values by considering the dataset as a whole.

\begin{proposition}\label{prop:alignment_compound}
Under the setting and assumptions above, let $T:\R^d\to\R$ be any function, and define
\[p_i = \frac{1}{m}\sum_{j=1}^m \bar{F}_j(T(\tilde{X}_i))\]
where
\[\bar{F}_j(t) = \Pp{\omega\sim P}{T(X_j + \omega) \geq t}.\]
Then $p_1,\dots,p_m$ are compound p-values.
\end{proposition}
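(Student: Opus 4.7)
The goal is to verify the compound p-value condition $\sum_{i\in\nulls}\PP{p_i\leq t}\leq mt$ for every $t\in[0,1]$. The plan is to condition on the unlabeled feature matrix $X=(X_1,\dots,X_m)$ and reduce the claim to a direct application of Proposition~\ref{prop:average_null_CDFs} (right-tail version).

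Fix $i\in\nulls$. Under $H_{0,i}$ we have $\tilde{X}_i=X_{\pi(i)}+\omega_i$ with $\omega_i\sim P$ drawn independently of $X$, so conditional on $X$ the real-valued observation $Y_i:=T(\tilde{X}_i)$ has right-tail null CDF exactly $\bar{F}_{\pi(i)}$, by the very definition of $\bar{F}_j$ in the statement. Set $G_i:=\bar{F}_{\pi(i)}$; then, conditional on $X$, the data $\{Y_i\}_{i\in[m]}$ with null right-tail CDFs $\{G_i\}_{i\in[m]}$ sit exactly in the setup of Proposition~\ref{prop:average_null_CDFs}.

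The key observation is that $\pi$ is a bijection, so $\pi(\nulls)\subseteq[m]$ has cardinality $|\nulls|$, and since each $\bar{F}_j$ is nonnegative,
\[
p_i \;=\; \frac{1}{m}\sum_{j=1}^m \bar{F}_j(Y_i) \;\geq\; \frac{1}{m}\sum_{j\in\pi(\nulls)} \bar{F}_j(Y_i) \;=\; \frac{1}{m}\sum_{i'\in\nulls} G_{i'}(Y_i).
\]
This is precisely the hypothesis of Proposition~\ref{prop:average_null_CDFs} (applied conditionally on $X$, with observations $Y_i$ and null right-tail CDFs $G_i$), so that proposition yields $\sum_{i\in\nulls}\PPst{p_i\leq t}{X}\leq mt$ almost surely. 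Taking expectations over $X$ gives $\sum_{i\in\nulls}\PP{p_i\leq t}\leq mt$, as required.

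The only real subtlety is the permutation bookkeeping: the null right-tail CDF of $Y_i$ is indexed by $\pi(i)$ rather than $i$, and it is crucial that $\pi$ is a bijection in order that summing $\bar{F}_j$ over all $j\in[m]$ dominates the sum over the re-indexed null set $\pi(\nulls)$. Beyond this, no dependence assumption or additional estimate is needed—everything reduces to a clean conditional application of the earlier proposition.
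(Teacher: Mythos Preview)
Your proof is correct and follows essentially the same approach as the paper: both identify that the null right-tail CDF of $T(\tilde{X}_i)$ is $\bar{F}_{\pi(i)}$ and then invoke Proposition~\ref{prop:average_null_CDFs}. The only cosmetic difference is the reindexing step---the paper rewrites $\sum_{j=1}^m \bar{F}_j = \sum_{j=1}^m \bar{F}_{\pi(j)}$ (using that $\pi$ is a bijection on $[m]$) and then drops to the sum over $j\in\nulls$, whereas you bound $\sum_{j=1}^m \bar{F}_j \geq \sum_{j\in\pi(\nulls)} \bar{F}_j$ directly; your explicit conditioning on $X$ is harmless (in the paper's setup the $X_j$'s are treated as fixed).
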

\begin{proof}[Proof of Proposition~\ref{prop:alignment_compound}]
First, observe that we can equivalently define
\[p_i = \frac{1}{m}\sum_{j=1}^m \bar{F}_{\pi(j)}(T(\tilde{X}_i)),\]
where $\pi$ is the unknown permutation.
    The result then follows immediately from Proposition~\ref{prop:average_null_CDFs}, since $\bar{F}_{\pi(i)}$ is the right-tailed CDF of the null distribution of $T(\tilde{X}_i) = T(X_{\pi(i)}+\omega_i)$, for each $i\in[m]$, according to our definition of the null hypothesis $H_{0,i}$.
\end{proof}

\refstepcounter{examplesettings}\label{example:weighted_pvalues}
\subsection{Example \theexamplesettings: weighted p-values}
Let $p^*_1,\dots,p^*_m$ be p-values, and let $w_1,\dots,w_m\geq 0$ be fixed weights satisfying $\sum_{i=1}^m w_i=m$. 
\citet{genovese2006false} define ``weighted p-values'',
\[p_i = \min\left\{ \frac{p^*_i}{w_i}, 1\right\}.\]
While these are no longer p-values in the sense of~\eqref{eqn:valid_pvalues} (unless we choose $w_i\equiv 1$), introducing weights is useful as it allows for the analyst to incorporate prior information regarding the different hypotheses $i\in[m]$, or to give different priority to these hypotheses (choosing a larger $w_i$ means that it will be easier to reject $H_{0,i}$).

\citet{ignatiadis2024compound} observe that the $p_i$'s are an example of compound p-values.
Interestingly, under the assumption that the $p^*_i$'s are independent, \citet{genovese2006false} prove that the BH procedure applied to the $p_i$'s maintains FDR control at level $\alpha$ (i.e., there is no inflation of the FDR); the same holds if the $p^*_i$'s satisfy the PRDS condition \citep{ramdas2019unified}. More generally, weighted p-values may be constructed by choosing data-dependent $w_i$, as long as the $w_i$'s are (compound) e-values and are independent of the $p_i$'s \citep{ignatiadis2024values}.

\refstepcounter{examplesettings}\label{example:Gaussian_means}
\subsection{Example \theexamplesettings: Gaussian means}
Our final example builds on one developed by \citet{ignatiadis2025empirical}. Suppose that for each $i\in[m]$, we observe data
\[Y_{i1},\dots,Y_{in}\iidsim \mathcal{N}(\mu_i,\sigma^2_i),\]
with the aim of testing the null hypothesis that $\mu_i=0$. If the variances $\sigma^2_i$ are known, then we can simply compute z-statistics $\frac{\bar{Y}_i}{\sigma_i/\sqrt{n}}$ (where $\bar{Y}_i$ denotes the mean of the $i$th sample), and the compare against the standard normal distribution to produce a p-value for each null. If the $\sigma^2_i$'s are unknown nuisance parameters, then we can instead run t-tests on the t-statistics $\frac{\bar{Y}_i}{S_i/\sqrt{n}}$, where $S_i^2$ is the $i$th sample variance---but if the sample size $n$ for each test is small, the high variance of $S_i$ may lead to noisy p-values and consequently low power. 

We will now develop a construction of compound p-values for this problem, in the setting where the variances $\sigma^2_i$ are unknown.\footnote{
\citet{ignatiadis2025empirical} also propose a construction of compound p-values for this problem, but work in the setting where the analyst has access to $\frac{1}{m}\sum_{i=1}^m\delta_{\sigma^2_i}$, the empirical distribution of the variance parameters. Their results construct compound p-values, using a different construction from~\eqref{eqn:p_star_gaussian_means}---their proposal \citep[Theorem 21]{ignatiadis2025empirical} also uses information from the sample variances $S^2_i$ for a more instance-adaptive approach. They also develop an asymptotically valid approach \citep[Proposition 15${}^*$]{ignatiadis2025empirical} that does not require knowledge of $\frac{1}{m}\sum_{i=1}^m \delta_{\sigma^2_i}$,  and establish an asymptotic bound on FDR, for this setting. See also \citet{ignatiadis2024compound} for a construction of compound e-values, rather than p-values, in this setting.}
To develop our solution, first suppose we had knowledge of the variances $\sigma^2_i$.
Then by Proposition~\ref{prop:average_null_CDFs}, 
\begin{equation}\label{eqn:p_star_gaussian_means}p^*_i = \frac{1}{m}\sum_{j=1}^m 2\bar{\Phi}\left(\frac{|\bar{Y}_i|}{\sigma_j/\sqrt{n}}\right)\end{equation}
define compound p-values, where $\bar{\Phi}(x)=1-\Phi(x)$ is the right-tailed CDF of the standard normal distribution.
Now, since $\sigma_j$'s are unknown, we will need to approximate the $p^*_i$'s, for which we will use the following lemma:
\begin{lemma}\label{lem:Phi_chisq}
    Let $n\geq 3$ and let $X\sim\sigma^2\chi^2_{n-1}$. Then for any $y>0$,
    \[\EE{1 - F_{\textnormal{Beta}(\frac{1}{2},\frac{n}{2}-1)}\left(\frac{y^2}{X}\right)} = 2\bar{\Phi}\left(\frac{y}{\sigma}\right),\]
    where $F_{\textnormal{Beta}(a,b)}$ is the CDF of the $\textnormal{Beta}(a,b)$ distribution.
\end{lemma}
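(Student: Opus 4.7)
The plan is to prove the distributional identity $BX \eqd \sigma^2 Z^2$, where $B \sim \textnormal{Beta}(\frac{1}{2}, \frac{n}{2}-1)$ is independent of $X$ and $Z \sim \mathcal{N}(0,1)$, from which the claim follows by a one-line tail-probability computation. As a first step, I would introduce an auxiliary $B \sim \textnormal{Beta}(\frac{1}{2}, \frac{n}{2}-1)$ drawn independently of $X$. Since $1 - F_{\textnormal{Beta}(\frac{1}{2},\frac{n}{2}-1)}(y^2/X) = \PPst{B > y^2/X}{X}$ by independence, the tower property gives
\[\EE{1 - F_{\textnormal{Beta}(\frac{1}{2},\frac{n}{2}-1)}\left(\tfrac{y^2}{X}\right)} = \PP{B > \tfrac{y^2}{X}} = \PP{BX > y^2}.\]
So the task reduces to identifying the distribution of the product $BX$.

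The key step is to invoke the classical beta--gamma decomposition of a chi-squared variable: if $U \sim \chi^2_p$ and $V \sim \chi^2_q$ are independent, then the sum $W := U + V \sim \chi^2_{p+q}$ and the ratio $R := U/(U+V) \sim \textnormal{Beta}(p/2, q/2)$ are themselves independent (a standard change-of-variables calculation on the joint density). Writing $U = R \cdot W$ then gives the factorization $\chi^2_p \eqd \textnormal{Beta}(p/2, q/2) \cdot \chi^2_{p+q}$ with independent factors. Taking $p = 1$ and $q = n-2$ (valid since $n \geq 3$), the beta parameters match up, $\textnormal{Beta}(1/2, (n-2)/2) = \textnormal{Beta}(\frac{1}{2}, \frac{n}{2}-1)$, and the chi-squared has $p+q = n-1$ degrees of freedom. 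Applying this with $X/\sigma^2 \sim \chi^2_{n-1}$ in place of the second factor yields $B \cdot (X/\sigma^2) \eqd \chi^2_1$, i.e.\ $BX \eqd \sigma^2 Z^2$, and therefore
\[\PP{BX > y^2} = \PP{\sigma^2 Z^2 > y^2} = \PP{|Z| > y/\sigma} = 2\bar{\Phi}(y/\sigma),\]
completing the proof.

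There is no real obstacle here: the argument is a matter of recognizing the right distributional identity, and the dimension-counting works out cleanly because $2 \cdot (\frac{1}{2} + \frac{n}{2}-1) = n-1$ matches the degrees of freedom of $X/\sigma^2$. The hypothesis $n \geq 3$ is precisely what is needed to keep the second beta parameter $\frac{n}{2}-1$ in the permissible range (with $n=3$ corresponding to the arcsine law $\textnormal{Beta}(\frac{1}{2},\frac{1}{2})$ and $q = 1$).
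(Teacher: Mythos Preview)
Your proof is correct. Both your argument and the paper's rest on the same distributional identity $B \cdot \chi^2_{n-1} \eqd \chi^2_1$ for independent $B \sim \textnormal{Beta}(\tfrac{1}{2},\tfrac{n}{2}-1)$, but you reach it by a different route. The paper constructs the identity geometrically: it takes $Z \sim \mathcal{N}(0,\sigma^2 \mathbf{I}_{n-1})$ and an independent $U$ uniform on the unit sphere in $\R^{n-1}$, observes that $(U^\top Z)^2/\|Z\|_2^2 \mid Z \sim \textnormal{Beta}(\tfrac{1}{2},\tfrac{n}{2}-1)$ while marginally $(U^\top Z)^2 \sim \sigma^2 \chi^2_1$, and then reads off the result using $\|Z\|_2^2 \eqd X$. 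You instead invoke the beta--gamma (Lukacs-type) decomposition of a chi-squared variable directly. Your route is a bit shorter and cites a standard textbook fact; the paper's route is more self-contained, in that it builds an explicit coupling realizing the identity rather than quoting it.
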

Noting that $(n-1)S^2_j\sim\sigma^2_j\chi^2_{n-1}$ for each $j$, this motivates the following approximation to~\eqref{eqn:p_star_gaussian_means}:
\[p_i =\frac{1}{m}\sum_{j=1}^m\left(1 - F_{\textnormal{Beta}(\frac{1}{2},\frac{n}{2}-1)}\left(\frac{n\bar{Y}_i^2}{(n-1)S^2_j}\right)\right).\]

\begin{proposition}\label{prop:Gaussian_means_approx_compound}
    Let $Y_{ij}\sim \mathcal{N}(\mu_i,\sigma^2_i)$, independently for each $i\in[m]$ and $j\in[n]$, where the $i$th null hypothesis is given by \[H_{0,i}:\ \mu_i=0.\]
    Then $p_1,\dots,p_m$ (as defined above) are $(0,\frac{1}{m})$-approximate compound p-values.
\end{proposition}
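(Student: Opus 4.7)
The plan is to use Lemma~\ref{lem:Phi_chisq} to show that $p_i$ has conditional expectation (given $\bar Y_i$) equal to a compound p-value certified by Proposition~\ref{prop:average_null_CDFs}, and then to absorb the fluctuations of $p_i$ around this conditional mean into the $1/m$ slack. First, I would record the key independence: for every $i,j\in[m]$, $\bar Y_i \independent S^2_j$. When $j\neq i$ this is immediate from the mutual independence of the samples $(Y_{i\ell})_{\ell=1}^n$ across $i$; when $j=i$ it is the classical fact that the sample mean and sample variance of a Gaussian sample are independent. Since $(n-1)S^2_j \sim \sigma_j^2 \chi^2_{n-1}$ regardless of $\mu_j$, applying Lemma~\ref{lem:Phi_chisq} conditionally on $\bar Y_i$ with $y=\sqrt n\,|\bar Y_i|$ and $X=(n-1)S^2_j$ would give
\[\EEst{1 - F_{\textnormal{Beta}(\frac{1}{2},\frac{n}{2}-1)}\!\bigl(n\bar Y_i^2/((n-1)S^2_j)\bigr)}{\bar Y_i} = 2\bar\Phi(\sqrt n\,|\bar Y_i|/\sigma_j) =: \bar F_j(|\bar Y_i|),\]
so that $\EEst{p_i}{\bar Y_i} = p^*_i := \frac{1}{m}\sum_{j=1}^m \bar F_j(|\bar Y_i|)$. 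Since $\bar F_j$ is precisely the right-tailed null CDF of the test statistic $|\bar Y_j|$ under $H_{0,j}$, Proposition~\ref{prop:average_null_CDFs} would then verify that the oracle quantities $p^*_1,\dots,p^*_m$ are compound p-values: $\sum_{i\in\nulls}\PP{p^*_i\leq t}\leq mt$.

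The second step would be to bridge from the observed $p_i$ to the oracle $p^*_i$. The crucial feature is that the ``self-term'' $j=i$ contributes at most $1/m$ to $p_i$: replacing it by its conditional expectation $\bar F_i(|\bar Y_i|)/m$ produces a modified quantity $\tilde p_i$ with $|\tilde p_i - p_i|\leq 1/m$ and the same conditional mean $p^*_i$. Hence $\PP{p_i\leq t} \leq \PP{\tilde p_i \leq t+1/m}$. Conditioning on $(S^2_j)_{j\neq i}$, and using that $\tilde p_i$ is a deterministic decreasing function of $|\bar Y_i|$ and that $\bar Y_i \sim \mathcal{N}(0,\sigma_i^2/n)$ under $H_{0,i}$, would reduce the claim to a bound of the form
\[\EE\Bigl[\sum_{i\in\nulls}\bar F_i(\tilde u_i)\Bigr] \leq mt + 1,\]
where $\tilde u_i$ is the random threshold determined by the defining equation $\tilde p_i(\tilde u_i)=t+1/m$.

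The hard part will be establishing this last bound, because the threshold $\tilde u_i$ depends on $(S^2_j)_{j\neq i}$, so the unbiasedness identity $\EEst{1-F_{\textnormal{Beta}}(nu^2/((n-1)S^2_j))}{u}=\bar F_j(u)$ from Lemma~\ref{lem:Phi_chisq} cannot be applied directly at $u=\tilde u_i$. I would close the gap via a leave-one-out argument, comparing $\tilde u_i$ to a version $\tilde u_i^{(j)}$ built by swapping $S^2_j$ for an independent copy, so that $\tilde u_i^{(j)}$ is independent of $S^2_j$ and the lemma applies cleanly; the $1/m$ slack should emerge from the fact that each omitted term perturbs the implicit threshold by a quantity bounded by $1/m$ on the $p_i$-scale. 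This parallels the structure of the proof of Proposition~\ref{prop:decreasing_density}, where a $1/m$ slack in the compound-p-value bound arose from a ``bad event'' of probability $\leq 1/m$ accumulated by a union bound over the $|\nulls|\leq m$ hypotheses.
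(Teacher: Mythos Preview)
Your first step---identifying the oracle $p_i^* = \EEst{p_i}{\bar Y_i}$ via Lemma~\ref{lem:Phi_chisq} and recognizing it as a compound p-value---is correct and matches the motivation in~\eqref{eqn:p_star_gaussian_means}. But the difficulty you flag in the second step is genuine, and the leave-one-out sketch does not close it. Conditioning on $\bar Y_i$ and integrating out the $S_j^2$'s gives only that the \emph{conditional mean} of $p_i$ equals $p_i^*$, which by itself says nothing about $\PP{p_i\le t}$. Your proposed fix---swapping one $S_j^2$ for an independent copy so that the resulting threshold $\tilde u_i^{(j)}$ is independent of $S_j^2$---has two problems. First, to handle every term $j\neq i$ you need a different swap, and each contributes a perturbation of order $1/m$ on the $p$-scale; accumulated over $m-1$ terms this is $O(1)$, not $O(1/m)$. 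Second, even granting the idealized identity term by term, summing the defining equation over $i\in\nulls$ yields a statement about $\sum_{i\in\nulls}\sum_{j}\bar F_j(\tilde u_i)$, not about $\sum_{i\in\nulls}\bar F_i(\tilde u_i)$, and there is no reason the latter should be controlled by the former when the $\sigma_i$'s are heterogeneous. The analogy with Proposition~\ref{prop:decreasing_density} is also off: there the $1/m$ slack comes from a single bad event of probability $\le 1/m$, not from accumulated perturbation errors.

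The paper avoids all of this by conditioning in the opposite direction. It fixes the entire data set $Y_1,\dots,Y_m$ (hence all $S_j^2$) and uses the distributional identity $n\bar Y_i^2 \eqd B_i\cdot (n-1)S_i^2$ with $B_i\sim\textnormal{Beta}(\tfrac12,\tfrac n2-1)$ independent of everything, to define $\tilde p_i = \frac{1}{m}\sum_{j=1}^m\bigl(1 - F_{\textnormal{Beta}(\frac12,\frac n2-1)}(B_iS_i^2/S_j^2)\bigr)$. Under $H_{0,i}$ the non-self terms of $\tilde p_i$ and $p_i$ have the same joint law (since $(B_i(n-1)S_i^2,(S_j^2)_{j\neq i})\eqd (n\bar Y_i^2,(S_j^2)_{j\neq i})$), so $\PP{p_i\le t}\le\PP{\tilde p_i\le t+\tfrac1m}$ exactly as you anticipated for the self-term. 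The payoff is that, conditional on the data, each summand $x\mapsto 1-F_{\textnormal{Beta}}(x/((n-1)S_j^2))$ is now a \emph{deterministic} right-tail CDF---namely that of $B_j(n-1)S_j^2$---evaluated at the single random input $B_i(n-1)S_i^2$. Proposition~\ref{prop:average_null_CDFs} then applies directly (with the conditional problem having $\nulls=[m]$), giving $\frac{1}{m}\sum_i\PPst{\tilde p_i\le t}{Y_1,\dots,Y_m}\le t$ with no random-threshold issue and no leave-one-out. Lemma~\ref{lem:Phi_chisq} is never invoked in the proof; it serves only to motivate the definition of $p_i$.
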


\begin{proof}[Proof of Lemma~\ref{lem:Phi_chisq}]
    Let $v\in\R^{n-1}$ be any unit vector, and let $U\in\R^{n-1}$ be a unit vector drawn uniformly from the sphere. Then 
    \[(U^\top v)^2 \sim \textnormal{Beta}\left(\frac{1}{2},\frac{n}{2}-1\right).\]
    Now let $Z\sim \mathcal{N}(0,\sigma^2\mathbf{I}_{n-1})$, then by taking $v=Z/\|Z\|_2$,
    \[\frac{(U^\top Z)^2}{\|Z\|^2_2} \mid Z \sim  \textnormal{Beta}\left(\frac{1}{2},\frac{n}{2}-1\right).\]
    Therefore,
    \[1 - F_{\textnormal{Beta}(\frac{1}{2},\frac{n}{2}-1)}\left(\frac{y^2}{\|Z\|^2_2}\right) = \PPst{\frac{(U^\top Z)^2}{\|Z\|^2_2} \geq \frac{y^2}{\|Z\|^2_2}}{Z} = \PPst{(U^\top Z)^2 \geq y^2}{Z} .\]
    Taking the expected value, we then have
    \[\EE{1 - F_{\textnormal{Beta}(\frac{1}{2},\frac{n}{2}-1)}\left(\frac{y^2}{\|Z\|^2_2}\right)} = \EE{\PPst{(U^\top Z)^2 \geq y^2}{Z} } = \PP{(U^\top Z)^2\geq y^2}.\]
    But marginally we have $(U^\top Z)^2\sim\sigma^2\chi^2_1$, and so
    \[\PP{(U^\top Z)^2\geq y^2} = 2\bar{\Phi}(y/\sigma),\]
    as desired. Since $\|Z\|^2_2\eqd X$ this completes the proof.
\end{proof}
\begin{proof}[Proof of Proposition~\ref{prop:Gaussian_means_approx_compound}]
If $H_{0,i}$ is true, then note that 
\[n\bar{Y}_i^2\sim \sigma^2_i\chi^2_1, \quad (n-1)S^2_i\sim \sigma^2_i\chi^2_{n-1}\]
which satisfies
\[n\bar{Y}_i^2\eqd B_i \cdot (n-1)S^2_i, \]
where $B_i\sim\textnormal{Beta}(\frac{1}{2},\frac{n}{2}-1)$ is drawn independently of all the data. 
Now define
\[\tilde{p}_i = \frac{1}{m}\sum_{j=1}^m\left(1 - F_{\textnormal{Beta}(\frac{1}{2},\frac{n}{2}-1)}\left(\frac{B_i \cdot S^2_i}{S^2_j}\right)\right).\]
Under $H_{0,i}$, note that, aside from the $i$th term, this is equal in distribution to $p_i$---that is,
\[\frac{1}{m}\sum_{j\in[m]\setminus\{i\}}\left(1 - F_{\textnormal{Beta}(\frac{1}{2},\frac{n}{2}-1)}\left(\frac{B_i \cdot S^2_i}{S^2_j}\right)\right) \eqd \frac{1}{m}\sum_{j\in[m]\setminus\{i\}}\left(1 - F_{\textnormal{Beta}(\frac{1}{2},\frac{n}{2}-1)}\left(\frac{n\bar{Y}_i^2}{(n-1)S^2_j}\right)\right).\]
Therefore, for $i\in\nulls$,
\[\PP{p_i\leq t}\leq \PP{\tilde{p}_i\leq t+ \frac{1}{m}}.\]
Next, conditional on $Y_1,\dots,Y_m$, note that $B_i\cdot (n-1)S^2_i$ has right-tail CDF $\bar{F}_i(x):=1 - F_{\textnormal{Beta}(\frac{1}{2},\frac{n}{2}-1)}(\frac{x}{(n-1)S^2_i})$. By Proposition~\ref{prop:average_null_CDFs}, then,
\[\frac{1}{m}\sum_{i=1}^m \PPst{\tilde{p}_i\leq t}{Y_1,\dots,Y_m} \leq t.\]
By marginalizing over $Y_1,\dots,Y_m$, this completes the proof.
\end{proof}

\section{Extensions}\label{app:extensions}
In this section we present several additional theoretical findings that extend
some of our main results.
\subsection{The independent case, with an additional assumption}
Recall that in Theorem~\ref{thm:fdr_independent}, we showed that the BH procedure,
when run on independent compound p-values, results in an FDR bound that is at most $1.93\alpha$---and in Proposition~\ref{prop:fdr_example}, we showed that a constant-factor inflation is unavoidable, without further assumptions. In this next result, we show that, if we place
an additional assumption on the marginal distributions of the $p_i$'s (ensuring that $\PP{p_i\leq \alpha}$ cannot be too large for any $i\in\nulls$), a tighter bound is possible. This can also be derived from \citet[][Theorem~1]{10.1214/18-EJS1441}; here we present a streamlined proof for completeness.
\begin{theorem}\label{thm:fdr_with_gamma}
    In the setting of Theorem~\ref{thm:fdr_independent}, if we additionally assume that
    \[\max_{j\in\nulls}\PP{p_j\leq\alpha}\leq \gamma,\]
    then the BH procedure at level $\alpha$ satisfies
    \[\fdr\leq \frac{\alpha}{1-\gamma}.\]
\end{theorem}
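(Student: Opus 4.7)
My plan is to mimic the proof of Theorem~\ref{thm:fdr_independent} from Appendix~\ref{app:proofs_mainresults} step by step, and inject the extra hypothesis $\max_{j\in\nulls}\PP{p_j\leq\alpha}\leq\gamma$ at the one place where it can be exploited: the bound on the conditional probabilities $\bar{q}_i$ that appear in the decomposition of the conditional FDR. Conditioning on the non-null p-values, the starting point is the identity
\[
\EEst{\fdp(\Sbh)}{(p_j)_{j\notin\nulls}} = \sum_{i=1}^{|\nulls|}\frac{i}{k_i}\,\bar{q}_i\prod_{j=i+1}^{|\nulls|}(1-\bar{q}_j),
\]
where $0\leq\bar{q}_i\leq\PPst{\mathcal{E}_i}{(p_j)_{j\notin\nulls}}$ by the FKG-type argument of Lemma~\ref{lem:increasing_sets}, and where $\mathcal{E}_i$ is the event that $\sum_{j\in\nulls}\One{p_j\leq \alpha k_i/m}\geq i$.

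Next I would apply the assumption. For every $i\in[|\nulls|]$ and $j\in\nulls$, the indicator $\One{p_j\leq \alpha k_i/m}$ is an independent Bernoulli with mean at most $\PP{p_j\leq\alpha}\leq\gamma$, since $\alpha k_i/m\leq\alpha$; and by the compound property $\sum_{j\in\nulls}\PP{p_j\leq\alpha k_i/m}\leq\alpha k_i$. So in place of the quantity $B_i(\alpha k_i)$ used in the proof of Theorem~\ref{thm:fdr_independent}, one now has $\bar{q}_i\leq B_i^{\gamma}(\alpha k_i)$, where
\[
B_i^{\gamma}(t) := \sup\Bigl\{\PP{A_1+\dots+A_r\geq i}: r\geq 1,\ A_j\text{ indep.\ Bernoulli},\ \EE{A_j}\leq\gamma,\ \textstyle\sum_{j=1}^r\EE{A_j}\leq t\Bigr\}.
\]

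The main obstacle is a sharper analog of Lemma~\ref{lem:t_q_sum}: I need to prove
\[
\sup\left\{\sum_{i=1}^L\frac{i}{t_i}\,\bar{q}_i\prod_{j=i+1}^L(1-\bar{q}_j): t_i>0,\ 0\leq\bar{q}_i\leq B_i^{\gamma}(t_i),\ \forall i\right\}\leq \frac{1}{1-\gamma},
\]
since then setting $t_i=\alpha k_i$ and multiplying by $\alpha$ gives exactly $\fdr\leq\alpha/(1-\gamma)$. My strategy for this inequality is to use the product inequality $\prod_j(1-x_j)\geq \exp\bigl(-\sum_j x_j/(1-\gamma)\bigr)$, which is valid whenever $x_j\in[0,\gamma]$ (it follows from $\log(1-x)\geq -x/(1-\gamma)$ on $[0,\gamma]$, proved by elementary calculus). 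Applied to the Bernoulli means defining $B_i^{\gamma}$, this yields $B_i^{\gamma}(t)\leq 1-e^{-t/(1-\gamma)}\leq t/(1-\gamma)$, a Markov-type tail with the improved constant $1/(1-\gamma)$ replacing the trivial factor $1$. I would then plug this into the recursion $S_L=(1-\bar q_L)S_{L-1}+(L/t_L)\bar q_L$ (satisfied by the sum on the left) and argue by induction on $L$ that the sup is at most $1/(1-\gamma)$: since the adversarial $\bar q_L$ is either $0$ or the saturated value $B_L^{\gamma}(t_L)$, and since the adversarial $t_L$ makes the one-step gain worst when $L/t_L$ just balances the inductive constant, the fixed point of the induction is exactly $C=1/(1-\gamma)$, matching the single-step enhancement provided by the refined tail bound.
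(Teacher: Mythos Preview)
Your proposed analogue of Lemma~\ref{lem:t_q_sum} is false, so the induction cannot close at the constant $1/(1-\gamma)$.  The difficulty is that the extra constraint $\EE{A_j}\leq\gamma$ buys nothing in the definition of $B_i^{\gamma}(t)$: for $i\geq 2$ and $t\leq i-1$, Hoeffding's bound $\PP{\sum A_j\geq i}\leq\PP{\textnormal{Pois}(t)\geq i}$ is attained in the limit of infinitely many Bernoullis with vanishing means, and these satisfy the bound $\gamma$ automatically, so $B_i^{\gamma}(t)=B_i(t)$.  Concretely, take $\gamma=0.1$, $L=2$, $t_1\downarrow 0$ with $\bar q_1=t_1$ (allowed since $B_1^{\gamma}(t_1)=t_1$ for $t_1\leq\gamma$), and $t_2=1$ with $\bar q_2=B_2^{\gamma}(1)=\PP{\textnormal{Pois}(1)\geq 2}=1-2/e\approx 0.264$; then the sum equals $(1)(1-\bar q_2)+2\bar q_2\approx 1.264>1/(1-\gamma)\approx 1.111$.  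Your specific tail bound $B_i^{\gamma}(t)\leq 1-e^{-t/(1-\gamma)}\leq t/(1-\gamma)$ is also not useful: for $i=1$ it is weaker than the union bound $B_1(t)\leq t$, and for $i\geq 2$ it is merely the trivial statement $B_i^{\gamma}\leq B_1^{\gamma}$.  In short, passing to the abstract optimization over $(t_i,\bar q_i)$ discards precisely the structure that the hypothesis $\PP{p_j\leq\alpha}\leq\gamma$ provides.

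The paper's proof proceeds by a completely different, leave-one-out route.  Writing $\kh_i$ for the BH count with $p_i$ set to $0$, one has $\fdr=\sum_{i\in\nulls}\EE{F_i(\alpha\kh_i/m)/\kh_i}$ with $\kh_i\independent p_i$; since $\PP{p_i>\alpha}\geq 1-\gamma$, each summand is at most $\frac{1}{1-\gamma}\EE{\frac{F_i(\alpha\kh_i/m)}{\kh_i}\One{p_i>\alpha}}$.  The key observation is that on $\{p_i>\alpha\}$ one has $\kh_i=\kh_+:=\max\{k:1+\sum_{j=1}^m\One{p_j\leq\alpha k/m}\geq k\}$, a quantity that does not depend on $i$; dropping the indicator and summing then gives $\fdr\leq\frac{1}{1-\gamma}\EE{\sum_{i\in\nulls}F_i(\alpha\kh_+/m)/\kh_+}\leq\frac{\alpha}{1-\gamma}$ directly from the compound property.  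The $\gamma$ assumption is used once, to pay for replacing the $i$-dependent $\kh_i$ by the common $\kh_+$, which is what allows the compound bound to be applied as a single sum rather than term by term.
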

\begin{proof}[Proof of Theorem~\ref{thm:fdr_with_gamma}]
Define
\[\kh_i = \max\left\{k \in [m]: 1 + \sum_{j\in[m]\setminus \{i\}} \One{p_j\leq \frac{\alpha k}{m}} \geq k\right\},\]
which is the number of rejections that the BH procedure would make, if run on values $(p_1,\dots,p_{i-1},0,p_{i+1},\dots,p_m)$. By construction, we have $\kh_i\geq 1\vee \kh$ always, and moreover it is well known that
\begin{equation}\label{eqn:kh_khi} i\in\Sbh \ \Longleftrightarrow \ \kh_i = \kh \ \Longleftrightarrow \ p_i\leq \frac{\alpha \kh_i}{m},\end{equation}
by standard leave-one-out arguments for analyzing the BH procedure \citep{benjamini2001control,ferreira2006benjamini}.
Note that $\kh_i$ is a function of $(p_j)_{j\neq i}$, and therefore is independent of $p_i$.

For each $i\in\nulls$, let $F_i(t) = \PP{p_i\leq t}$ be the CDF of $p_i$. 
We then have
\begin{align*}
    \fdr &= \EE{\fdp(\Sbh)}\\
    &=\EE{\frac{\sum_{i\in\nulls}\One{i\in\Sbh}}{1\vee|\Sbh|}}
    =\sum_{i\in\nulls}\EE{\frac{\One{p_i\leq \frac{\alpha\kh}{m}}}{1\vee \kh}}\textnormal{ by definition of $\Sbh$}\\
    &=\sum_{i\in\nulls}\EE{\frac{\One{p_i\leq \frac{\alpha\kh_i}{m}}}{\kh_i}}\textnormal{\quad by~\eqref{eqn:kh_khi}}\\
    &=\sum_{i\in\nulls}\EE{\frac{F_i\left(\frac{\alpha\kh_i}{m}\right)}{\kh_i}}\textnormal{\quad since $p_i\independent \kh_i$}\\
    &\leq\frac{1}{1-\gamma}\sum_{i\in\nulls}\EE{\frac{F_i\left(\frac{\alpha\kh_i}{m}\right)}{\kh_i} \cdot \One{p_i>\alpha}},
\end{align*}
where the last step holds since $p_i\independent \kh_i$, and $\PP{p_i>\alpha}\geq 1-\gamma$.
Next, define
\[\kh_+ = \max\left\{k \in [m]: 1 + \sum_{i=1}^m \One{p_i\leq \frac{\alpha k}{m}} \geq k\right\},\]
and observe that, if $p_i>\alpha$, then we must have $\kh_+ = \kh_i$ by construction. Therefore, 
\begin{align*}
\fdr
&\leq \frac{1}{1-\gamma}\sum_{i\in\nulls}\EE{\frac{F_i\left(\frac{\alpha\kh_i}{m}\right)}{\kh_i} \cdot \One{p_i>\alpha}}\\
& \leq \frac{1}{1-\gamma}\sum_{i\in\nulls}\EE{\frac{F_i\left(\frac{\alpha\kh_+}{m}\right)}{\kh_+} }\textnormal{ since $\kh_i=\kh_+$ whenever $\One{p_i>\alpha}=1$}\\
& = \frac{1}{1-\gamma}\EE{\frac{\sum_{i\in\nulls} F_i\left(\frac{\alpha\kh_+}{m}\right)}{\kh_+}}
\leq \frac{1}{1-\gamma}\EE{\frac{m \cdot \frac{\alpha\kh_+}{m}}{\kh_+}} = \frac{\alpha}{1-\gamma},
\end{align*}
where the last inequality holds since, for all $t\in[0,1]$,
\[\sum_{i\in\nulls}F_i(t) = \sum_{i\in\nulls}\PP{p_i\leq t}\leq mt,\]
by definition of compound p-values.
\end{proof}

\subsection{Universality result}
In Proposition~\ref{prop:average_null_CDFs}, we showed that using an average of null distributions leads to compound p-values. The following proposition shows a universality result that can be viewed as a sort of converse: the average-of-nulls construction is essentially the \emph{only} way to obtain compound p-values.
\begin{proposition}\label{prop:universality} Suppose we observe data $X_i$ for each hypothesis $H_{0,i}$, for $i\in[m]$. Let $p_1,\dots,p_m$ be compound p-values, and assume $p_i$ depends only on $X_i$.
Then there exist functions $T_i$ for each $i\in[m]$, such that
    \[p_i \geq \frac{1}{m}\sum_{j\in\nulls} F_j(T_i(X_i))\textnormal{\quad for all $i\in[m]$},\]
    where $F_j$ is the CDF of the distribution of $T_j(X_j)$ for each $j\in\nulls$.
\end{proposition}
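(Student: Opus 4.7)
The plan is to take $T_i$ to be simply the function mapping $X_i$ to $p_i$, for each $i\in[m]$. Since by hypothesis each $p_i$ depends only on $X_i$, this is a well-defined (measurable) function. Under this choice, for every $j\in\nulls$ the random variable $T_j(X_j)$ equals $p_j$, so the corresponding CDF is $F_j(t)=\PP{p_j\leq t}$ (under the actual, and hence null, distribution of $X_j$).

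Next, I would substitute $T_i(X_i)=p_i$ into the right-hand side of the claimed inequality to obtain
\[
\frac{1}{m}\sum_{j\in\nulls}F_j(T_i(X_i)) = \frac{1}{m}\sum_{j\in\nulls}\PP{p_j\leq p_i},
\]
where the realized value of $p_i$ inside each probability is treated as a deterministic input to the deterministic CDF $F_j$. The compound p-value condition~\eqref{eqn:compound_valid_pvalues} states precisely that the function $t\mapsto \sum_{j\in\nulls}\PP{p_j\leq t}$ is bounded above by $mt$ for every $t\in[0,1]$. Since $p_i\in[0,1]$ almost surely, evaluating this pointwise inequality at $t=p_i$ gives $\sum_{j\in\nulls}\PP{p_j\leq p_i}\leq m p_i$ almost surely, which, after dividing by $m$, is the desired conclusion. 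Notably, this works for every $i\in[m]$, not only $i\in\nulls$, because the inequality we are invoking is a deterministic statement about the function $t\mapsto \frac{1}{m}\sum_{j\in\nulls}\PP{p_j\leq t}$ on $[0,1]$.

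In short, there is no substantive obstacle here: the content of the proposition is the observation that the compound p-value condition \emph{is} exactly the pointwise inequality that the average-of-null-CDFs construction of Proposition~\ref{prop:average_null_CDFs} demands, so taking the $p_i$'s themselves as the test statistics makes that construction tight. In this sense the proposition is a direct converse to Proposition~\ref{prop:average_null_CDFs}, and the proof amounts to choosing the right test statistic and unpacking definitions.
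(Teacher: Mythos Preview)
Your proposal is correct and is essentially identical to the paper's own proof: both take $T_i$ to be the function with $T_i(X_i)=p_i$, identify $F_j$ as the CDF of $p_j$, and then apply the compound p-value inequality $\frac{1}{m}\sum_{j\in\nulls}\PP{p_j\leq t}\leq t$ at $t=p_i$.
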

\begin{proof}[Proof of Proposition~\ref{prop:universality}]
For each $i\in[m]$, let $T_i$ be the function such that $p_i=T_i(X_i)$. Let $F_i$ be the CDF of $p_i$ for each $i\in\nulls$. Then for any $t\in[0,1]$,
    \[\frac{1}{m}\sum_{j\in\nulls}F_j(t) = 
    \frac{1}{m}\sum_{j\in\nulls}\PP{p_j\leq t}\leq t,\]
    and so for each $i\in[m]$,
    \[\frac{1}{m}\sum_{j\in\nulls} F_j(T_i(X_i)) \leq T_i(X_i) = p_i.\]
\end{proof}

\section{Proofs of supporting lemmas}\label{app:proofs_lemmas}

\subsection{Proof of Lemma~\ref{lem:BH_condition_on_nonnulls}}
    Below, we will show that $\kh=k_I$ (where $\kh$ is defined in the construction of the BH procedure~\eqref{eqn:BH_define_kh}). Since $|\Sbh|=\kh$, this proves the claim that $|\Sbh|=k_I$.  We also have
    \[\Sbh = \left\{j : p_j\leq \frac{\alpha\kh}{m}\right\} = \left\{j : p_j\leq \frac{\alpha k_I}{m}\right\},\]
    and so
    \[\Sbh\cap\nulls =  \left\{j\in\nulls : p_j \leq \frac{\alpha k_I}{m}\right\}.\]
    But, by definition of $I$,
    \[
    I \leq \sum_{j\in\nulls}\One{p_j\leq \frac{\alpha k_I}{n}} \leq \sum_{j\in\nulls}\One{p_j\leq \frac{\alpha k_{I+1}}{n}} < I+1,
    \]
    so $|\Sbh\cap\nulls| = I$.

    Now we verify that $\kh = k_I$. First, by definition of $I$,
    \[\sum_{j\in\nulls}\One{p_j\leq \frac{\alpha k_I}{m}} \geq I,\]
    and so
    \[\sum_{j=1}^m \One{p_j\leq \frac{\alpha k_I}{m}} \geq I + 
    \sum_{j\not\in\nulls}\One{p_j\leq \frac{\alpha k_I}{m}} \geq k_I,\]
    where the last step holds by definition of $k_i$. Therefore, $\kh \geq k_I$.
    Next, let 
    \[i = \sum_{j\in\nulls}\One{p_j\leq \frac{\alpha \kh}{m}}.\]
    By definition of $\kh$, we must have 
    \[i + \sum_{j\not\in\nulls}\One{p_j\leq \frac{\alpha \kh}{m}}
    = \sum_{j=1}^m\One{p_j\leq \frac{\alpha \kh}{m}}\geq \kh,\]
    which means that $k_i \geq \kh$, by definition of $k_i$. Next,
    \[\sum_{j\in\nulls}\One{p_j\leq \frac{\alpha k_i}{m}}
    \geq \sum_{j\in\nulls}\One{p_j\leq \frac{\alpha \kh}{m}} = i,\]
    and therefore $I\geq i$, by definition of $I$. This means that $k_I\geq k_i\geq \kh$, which completes the proof.

\subsection{Proof of Lemma~\ref{lem:increasing_sets}}

We will prove a strictly stronger result:
\begin{lemma}\label{lem:increasing_sets_stronger}
    Let $Y_1,\dots,Y_k$ be real-valued random variables whose joint distribution satisfies the following condition: for any distinct $i_1,\dots,i_L\in[k]$,
    \begin{multline}\label{eqn:prds_stronger}
    \textnormal{The function }(y_1,\dots,y_L) \mapsto \PPst{(Y_1,\dots,Y_k) \in A}{Y_{i_1}=y_1,\dots,Y_{i_L}=y_L}\\
    \textnormal{ is nondecreasing coordinatewise in each $y_1,\dots,y_L$, for any increasing set $A\subseteq\R^k$.}\end{multline}
    Then for any increasing sets $A,B\subseteq\R^k$,
    \[\PP{(Y_1,\dots,Y_k)\in A\cap B}\geq \PP{(Y_1,\dots,Y_k)\in A}\cdot \PP{(Y_1,\dots,Y_k)\in B}.\]
\end{lemma}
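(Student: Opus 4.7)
The plan is induction on $k$, in the standard FKG pattern: I will peel off one coordinate at a time using the fact that the conditional monotonicity condition~\eqref{eqn:prds_stronger} is preserved under conditioning on a coordinate, together with the classical Chebyshev sum inequality on $\R$ as the base case.

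For the base case $k=1$, any increasing set $A\subseteq\R$ is a half-line (up to a null set), so $\One{Y_1\in A}$ and $\One{Y_1\in B}$ are both nondecreasing functions of $Y_1$. Letting $Y_1'$ be an independent copy of $Y_1$, the product $(\One{Y_1\in A}-\One{Y_1'\in A})(\One{Y_1\in B}-\One{Y_1'\in B})$ is almost surely nonnegative, and taking expectation rearranges to $\PP{Y_1\in A\cap B}\geq \PP{Y_1\in A}\PP{Y_1\in B}$.

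For the inductive step, assume the claim at size $k-1$. The first thing I would check is that the conditional law of $(Y_1,\dots,Y_{k-1})$ given $Y_k=y$ still satisfies~\eqref{eqn:prds_stronger}: for any increasing $A'\subseteq\R^{k-1}$ and any distinct $j_1,\dots,j_L\in[k-1]$, the set $A'\times\R$ is an increasing subset of $\R^k$, so applying~\eqref{eqn:prds_stronger} to $A'\times\R$ with conditioning indices $k,j_1,\dots,j_L$ shows that
\[\PPst{(Y_1,\dots,Y_{k-1})\in A'}{Y_k=y,\, Y_{j_1}=y_1,\dots,Y_{j_L}=y_L}\]
is coordinatewise nondecreasing in $(y_1,\dots,y_L)$ for each fixed $y$. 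Now the slices $A_y=\{y'\in\R^{k-1}:(y',y)\in A\}$ and $B_y$ are increasing subsets of $\R^{k-1}$, so the inductive hypothesis applied to the conditional law yields
\[\PPst{Y\in A\cap B}{Y_k=y}\ \geq\ \PPst{Y\in A}{Y_k=y}\cdot \PPst{Y\in B}{Y_k=y}\]
for almost every $y$. Writing the two factors on the right as $f(y)$ and $g(y)$, I would then invoke~\eqref{eqn:prds_stronger} with $L=1$ and $i_1=k$ to conclude that $f$ and $g$ are nondecreasing functions of $y$. The same Chebyshev-style argument as in the base case---applied now to the bounded monotone functions $f(Y_k)$ and $g(Y_k)$ rather than to indicators---gives $\EE{f(Y_k)g(Y_k)}\geq \EE{f(Y_k)}\EE{g(Y_k)}=\PP{Y\in A}\PP{Y\in B}$. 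Taking expectation over $Y_k$ in the displayed conditional inequality then closes the induction.

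The main point to get right is the preservation of~\eqref{eqn:prds_stronger} under conditioning on a coordinate. This is precisely the reason the hypothesis is stated in the stronger form allowing arbitrary subsets $\{i_1,\dots,i_L\}$ of conditioning indices rather than only conditioning on $Y_i\leq t$ as in the PRDS definition~\eqref{eqn:PRDS}: without the symmetric ``conditional-on-any-subset'' form, the induction would not close, since after one round of conditioning I need the same property available for the remaining $k-1$ coordinates. All other steps---measurability of the slices, the $L^1$ integration to convert the conditional bound into an unconditional one, and the one-dimensional rearrangement inequality---are routine.
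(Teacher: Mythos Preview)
Your proposal is correct and follows essentially the same approach as the paper: induction on $k$, peeling off the last coordinate, verifying that the conditional law preserves~\eqref{eqn:prds_stronger}, applying the inductive hypothesis to the slices, and then using a one-dimensional correlation inequality on the nondecreasing functions $f(Y_k),g(Y_k)$. The only cosmetic differences are that the paper's base case observes that one-dimensional increasing sets are nested (so the inequality is trivial), and the paper invokes the rearrangement inequality by name rather than the independent-copy Chebyshev argument you sketch.
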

In particular, if $Y_1,\dots,Y_k$ are mutually independent then the condition~\eqref{eqn:prds_stronger} is satisfied (meaning that Lemma~\ref{lem:increasing_sets} is a special case of this new result). For intuition, we remark that the condition~\eqref{eqn:prds_stronger} can be viewed as a stronger, multivariate version of the PRDS assumption~\eqref{eqn:PRDS}. 

\begin{proof}[Proof of Lemma~\ref{lem:increasing_sets_stronger}]
    Let $P$ denote the joint distribution of $Y=(Y_1,\dots,Y_k)$, so that our aim is to show $P(A\cap B)\geq P(A)\cdot P(B)$ for all increasing sets $A,B \subseteq \mathbb{R}^k$. We will prove the claim by induction on $k$. First, if $k=1$, then we must either have $A\subseteq B$ or $B\subseteq A$, and so the claim holds trivially.

    Now fix any $k\geq 2$. Define $A_t = \{y\in\R^{k-1} : (y,t)\in A\}$ and $B_t = \{y\in\R^{k-1} : (y,t)\in B\}$. We see that $A_t,B_t$ are each increasing sets for any fixed $t$ (inheriting this property from $A,B$). Now let
    $Y_{-k} = (Y_1,\dots,Y_{k-1})\in\R^{k-1}$.
    The conditional distribution of $Y_{-k}\mid Y_k=t$ satisfies the condition~\eqref{eqn:prds_stronger} (because $Y$ itself satisfies the condition). Therefore, by induction, we have
    \[\PPst{Y_{-k}\in A_t\cap B_t}{Y_k = t} \geq \PPst{Y_{-k}\in A_t}{Y_k = t}\cdot \PPst{Y_{-k}\in B_t}{Y_k = t}.\]
    Equivalently, we can write this as
    \[\PPst{Y\in A\cap B}{Y_k = t} \geq \PPst{Y\in A}{Y_k = t}\cdot \PPst{Y\in B}{Y_k = t},\]
    by definition of $A_t,B_t$.
    Since this holds for every $t\in\R$, we therefore have
    \[\PPst{Y\in A\cap B}{Y_k} \geq \PPst{Y\in A}{Y_k}\cdot \PPst{Y\in B}{Y_k}\]
    almost surely. Marginalizing over $Y_k$, then,
    \begin{multline*}
        \PP{Y\in A\cap B}
        =\EE{\PPst{Y\in A\cap B}{Y_k}}\\
        \geq \EE{\PPst{Y\in A}{Y_k}\cdot \PPst{Y\in B}{Y_k}}
        =\EE{f(Y_k)\cdot g(Y_k)},
    \end{multline*}
    where we let $f(t) = \PPst{Y\in A}{Y_k = t}$ and $g(t) = \PPst{Y\in B}{Y_k = t}$. By~\eqref{eqn:prds_stronger}, we know that $f$ and $g$ are both nondecreasing functions. By the rearrangement inequality \citep[e.g.][Theorem~2.2]{schmidt2014inequalities}, therefore,
    \[\EE{f(Y_k)\cdot g(Y_k)} \geq \EE{f(Y_k)}\cdot \EE{g(Y_k)}.\]
    Finally, we calculate
    \[\EE{f(Y_k)} = \EE{\PPst{Y\in A}{Y_k}} = \PP{Y\in A} ,\]
    and similarly $\EE{g(Y_k)} = \PP{Y\in B}$, which completes the proof.    
\end{proof}

\subsection{Proof of Lemma~\ref{lem:t_q_sum}}
Define a sequence $c_1\leq c_2\leq \dots$ as follows:
$c_1 = 1$, $c_2 = 1.5$, and then recursively define
\[c_\ell = c_{\ell-1} + \PP{\textnormal{Pois}\left(\frac{\ell-1}{c_{\ell-1}}\right) \geq \ell-1} - c_{\ell-1} \cdot \PP{\textnormal{Pois}\left(\frac{\ell-1}{c_{\ell-1}}\right) \geq \ell}\]
for each $\ell\geq3$.
Now fix any feasible $t_1,\dots,t_L$ and $\bar{q}_1,\dots,\bar{q}_L$. We will prove the following bound by induction: for each $\ell=1,\dots,L$,
\begin{equation}\label{eqn:c_ell_step}\sum_{i=1}^{\ell}  \frac{i}{t_i}\cdot \bar{q}_i \prod_{j=i+1}^\ell (1-\bar{q}_j)\leq c_\ell.\end{equation}
Assuming that this is true, then at $\ell=L$ we have
\[\sum_{i=1}^L  \frac{i}{t_i}\cdot \bar{q}_i\prod_{j=i+1}^L(1-\bar{q}_j) \leq c_L.\]
In Section~\ref{sec:monotonicity_c_ell} below, we will prove that this sequence is increasing and bounded; we can verify numerically that $\lim_{\ell\to\infty} c_\ell \leq 1.9227$, and this will complete the proof.

Now we verify~\eqref{eqn:c_ell_step}.
First, at $\ell=1$,
\[\frac{1}{t_1}\cdot \bar{q}_1 \leq \frac{1}{t_1}\cdot B_1(t_1).\]
And, by a union bound,
\begin{align*}
    B_1(t_1)
    &=\sup\left\{\PP{A_1+\dots+A_r\geq 1}: \EE{A_1 + \dots + A_r}\leq t_1\right\}\\
    &\leq\sup\left\{\PP{A_1=1}+\dots+\PP{A_r=1}: \EE{A_1 + \dots + A_r}\leq t_1\right\}\\
    &=t_1.
\end{align*}
Therefore $\frac{1}{t_1}\cdot B_1(t_1)\leq 1=c_1$, which proves~\eqref{eqn:c_ell_step} for the case $\ell=1$.

Next let $\ell=2$. Since we have proved the bound at $\ell=1$, we have
\begin{align*}
    \frac{1}{t_1}\cdot \bar{q}_1(1-\bar{q}_2) + \frac{2}{t_2}\cdot \bar{q}_2
    &\leq 1\cdot (1-\bar{q}_2) + \frac{2}{t_2}\cdot \bar{q}_2\\
    &=1 + \left(\frac{2}{t_2} - 1\right)\bar{q}_2\\
    &\leq 1 + \left(\frac{2}{t_2} - 1\right)_+ B_2(t_2).
\end{align*}
And, by a union bound,
\begin{align*}
    B_2(t_2)
    &=\sup\left\{\PP{A_1+\dots+A_r\geq 2}: \EE{A_1 + \dots + A_r}\leq t_2\right\}\\
    &\leq \sup\left\{\sum_{1\leq j<j'\leq r}\PP{A_j=A_{j'}=1}: \EE{A_1 + \dots + A_r}\leq t_2\right\}\\
    &=\sup\left\{\sum_{1\leq j<j'\leq r}\PP{A_j=1}\PP{A_{j'}=1}: \EE{A_1 + \dots + A_r}\leq t_2\right\}\\
    &\leq \sup\left\{\frac{1}{2}\left(\sum_{j=1}^r \PP{A_j=1}\right)^2: \EE{A_1 + \dots + A_r}\leq t_2\right\}\\
    &=\frac{1}{2}t_2^2.
\end{align*}
And,
\[\left(\frac{2}{t_2} - 1\right)_+\cdot \frac{1}{2}t_2^2 \leq \frac{1}{2},\]
by simply maximizing the quadratic over $t_2$. 
Therefore,
\[\frac{1}{t_1}\cdot \bar{q}_1(1-\bar{q}_2) + \frac{2}{t_2}\cdot \bar{q}_2\leq 1.5=c_2,\]
which proves~\eqref{eqn:c_ell_step} for $\ell=2$.

Next fix any $\ell\geq 3$. First, 
\begin{align*}\sum_{i=1}^{\ell}  \frac{i}{t_i}\cdot \bar{q}_i \prod_{j=i+1}^\ell (1-\bar{q}_j)
&=\sum_{i=1}^{\ell-1}  \frac{i}{t_i}\cdot \bar{q}_i \prod_{j=i+1}^{\ell-1} (1-\bar{q}_j) \cdot (1-\bar{q}_\ell) + \frac{\ell}{t_\ell}\cdot \bar{q}_\ell\\
&\leq c_{\ell-1} \cdot (1-\bar{q}_\ell) + \frac{\ell}{t_\ell}\cdot \bar{q}_\ell\\
&=c_{\ell-1} + \left(\frac{\ell}{t_\ell} - c_{\ell-1}\right) \bar{q}_\ell\\
&\leq c_{\ell-1} + \left(\frac{\ell}{t_\ell} - c_{\ell-1}\right)_+ B_\ell(t_\ell),
\end{align*}
where the first inequality holds by induction.
Next, if $\left(\frac{\ell}{t_\ell} - c_{\ell-1}\right)_+>0$, then we must have
\[t_\ell \leq \frac{\ell}{c_{\ell-1}} \leq \frac{\ell}{1.5}\leq \ell-1,\]
where the last steps hold since $c_{\ell-1}\geq c_2=1.5$ and $\ell\geq 3$. In this case, it holds that
\[
    B_\ell(t_\ell)
    =\sup\left\{\PP{A_1+\dots+A_r\geq \ell}: \EE{A_1 + \dots + A_r}\leq t_\ell\right\}\\
    \leq \PP{\textnormal{Pois}(t_\ell)\geq \ell},
\]
by \citet[Theorem 4]{hoeffding1956distribution}. 
So far, then, we have proved that
\[\sum_{i=1}^{\ell}  \frac{i}{t_i}\cdot \bar{q}_i \prod_{j=i+1}^\ell (1-\bar{q}_j)
\leq c_{\ell-1} + \left(\frac{\ell}{t_\ell} - c_{\ell-1}\right)_+ \cdot \PP{\textnormal{Pois}(t_\ell)\geq \ell}.\]
Next, we calculate
\[
\frac{\ell}{t_\ell} \cdot \PP{\textnormal{Pois}(t_\ell)\geq \ell}
= \sum_{j=\ell}^\infty \frac{\ell}{t_\ell} \cdot \frac{t_\ell^j e^{-t_\ell}}{j!} \leq  \sum_{j=\ell}^\infty \frac{t_\ell^{j-1}e^{-t_\ell}}{(j-1)!} = \PP{\textnormal{Pois}(t_\ell)\geq \ell-1},\]
and therefore,
\[\sum_{i=1}^{\ell}  \frac{i}{t_i}\cdot \bar{q}_i \prod_{j=i+1}^\ell (1-\bar{q}_j)
\leq c_{\ell-1} + \Big(\PP{\textnormal{Pois}(t_\ell)\geq \ell-1}  - c_{\ell-1} \cdot \PP{\textnormal{Pois}(t_\ell)\geq \ell}\Big)_+\leq c_\ell,\]
where for the last step, we use the fact that
\[\sup_{t>0}\Big(\PP{\textnormal{Pois}(t)\geq \ell-1}  - c_{\ell-1} \cdot \PP{\textnormal{Pois}(t)\geq \ell}\Big)\]
is attained at $t=\frac{\ell-1}{c_{\ell-1}}$ (and is nonnegative),
by Lemma~\ref{lem:Poisson_difference_sup}. This verifies~\eqref{eqn:c_ell_step} for the $\ell$th term, and therefore completes the proof.

\begin{lemma}\label{lem:Poisson_difference_sup}
    Fix any $c>0$ and any integer $i\geq 2$, and define the function $f:(0,\infty)\to \R$ as
    \[f(t) = \PP{\textnormal{Pois}(t)\geq i-1} - c\cdot \PP{\textnormal{Pois}(t)\geq i} .\]
    Then $f(\cdot)$ is maximized at $t=\frac{i-1}{c}$.
\end{lemma}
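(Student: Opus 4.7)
The plan is to differentiate $f$ and find its unique critical point on $(0,\infty)$ by a direct calculation.

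First I would record the standard identity
\[
\frac{d}{dt}\PP{\textnormal{Pois}(t)\geq k} \;=\; \frac{t^{k-1}e^{-t}}{(k-1)!}
\]
for any integer $k\geq 1$. This follows by differentiating the series
$\sum_{j\geq k} t^j e^{-t}/j!$ termwise: each term contributes
$t^{j-1}e^{-t}/(j-1)! - t^j e^{-t}/j!$, and the sum telescopes to
$t^{k-1}e^{-t}/(k-1)!$.

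Applying the identity to both terms in the definition of $f$ yields
\[
f'(t) \;=\; \frac{t^{i-2}e^{-t}}{(i-2)!} \;-\; c\cdot \frac{t^{i-1}e^{-t}}{(i-1)!}
\;=\; \frac{t^{i-2}e^{-t}}{(i-2)!}\left(1 - \frac{c\,t}{i-1}\right),
\]
which is valid since $i\geq 2$. For $t>0$ the prefactor $t^{i-2}e^{-t}/(i-2)!$ is strictly positive, so $f'(t)$ has the same sign as $1 - ct/(i-1)$. Hence $f'$ is positive on $(0,(i-1)/c)$, vanishes at $t=(i-1)/c$, and is negative on $((i-1)/c,\infty)$, so $t=(i-1)/c$ is the (unique) global maximizer of $f$ on $(0,\infty)$.

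There is no real obstacle here: the only thing to double-check is that both terms in $f$ are differentiable and that the termwise differentiation of the Poisson tail is justified (standard, since the series converges uniformly on compact subsets of $t$). Everything else is a one-line factorization and a sign check.
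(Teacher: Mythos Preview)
Your proof is correct and essentially identical to the paper's: both differentiate $f$ using the telescoping identity for the Poisson tail, obtain $f'(t)=\frac{t^{i-2}e^{-t}}{(i-2)!}-c\cdot\frac{t^{i-1}e^{-t}}{(i-1)!}$, factor out the positive Poisson mass term, and read off the sign change at $t=(i-1)/c$. The only cosmetic difference is that the paper factors as $\bigl(\frac{i-1}{t}-c\bigr)\cdot\PP{\textnormal{Pois}(t)=i-1}$ while you factor as $\frac{t^{i-2}e^{-t}}{(i-2)!}\bigl(1-\frac{ct}{i-1}\bigr)$.
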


\begin{proof}[Proof of Lemma~\ref{lem:Poisson_difference_sup}]
    We compute the derivative:
    \begin{align*}
    f'(t) &= \frac{\partial}{\partial t}\Big\{\PP{\textnormal{Pois}(t)\geq i-1} - c\cdot  \PP{\textnormal{Pois}(t)\geq i}\Big\}\\
    &= \frac{\partial}{\partial t}\left\{\sum_{j=i-1}^\infty \frac{t^je^{-t}}{j!} - c\cdot \sum_{j=i}^\infty \frac{t^je^{-t}}{j!}\right\}\\
    &= \sum_{j=i-1}^\infty \frac{t^{j-1}e^{-t}}{(j-1)!} - \sum_{j=i-1}^\infty\frac{t^je^{-t}}{j!} - c\cdot \left(\sum_{j=i}^\infty \frac{t^{j-1}e^{-t}}{(j-1)!} - \frac{t^je^{-t}}{j!}\right)\\
    &=\frac{t^{i-2}e^{-t}}{(i-2)!} - c\cdot \frac{t^{i-1}e^{-t}}{(i-1)!} \\
    &= \left(\frac{i-1}{t}-c\right) \cdot \frac{t^{i-1}e^{-t}}{(i-1)!}\\
    &= \left(\frac{i-1}{t}-c\right) \cdot \PP{\textnormal{Pois}(t) = i-1}.
\end{align*}
We can see that $f'(t)=0$ at $t=\frac{i-1}{c}$. Moreover, since $\PP{\textnormal{Pois}(t) = i-1}>0$, this verifies that $f'(t)$ is positive for $t<\frac{i-1}{c}$ and negative for $t>\frac{i-1}{c}$, which completes the proof.
\end{proof}

\subsection{Monotonicity of the sequence \texorpdfstring{$(c_\ell)$}{(c_\ell)}}\label{sec:monotonicity_c_ell}

\begin{lemma}
The sequence $(c_\ell)$ defined in the proof of Lemma~\ref{lem:t_q_sum} is strictly increasing with $\lim_{\ell \rightarrow \infty} c_\ell < \infty$.
\end{lemma}
\begin{proof}
From the representation
\[
c_\ell = c_{\ell-1}\PP{\textnormal{Pois}\left(\frac{\ell-1}{c_{\ell-1}}\right) \leq \ell - 1} + \PP{\textnormal{Pois}\left(\frac{\ell-1}{c_{\ell-1}}\right) \geq \ell - 1},
\]
we see immediately by induction that $c_\ell > 1$ for $\ell \geq 2$.  Now fix $\ell \geq 3$, let $c = c_{\ell-1}$, let $\lambda = (\ell-1)/c$ and let $Z \sim \textnormal{Pois}(\lambda)$.  We have
\begin{align*}
    \PP{Z \geq \ell} &= \PP{Z = \ell-1}\sum_{r=1}^\infty \frac{\PP{Z = \ell-1+r}}{\PP{Z = \ell-1}} = \PP{Z = \ell-1}\sum_{r=1}^\infty \prod_{s=1}^r \frac{\lambda}{\ell - 1 + s} \\
    &< \PP{Z = \ell-1}\sum_{r=1}^\infty \Bigl(\frac{\lambda}{\ell}\Bigr)^r = \PP{Z = \ell-1} \cdot \frac{\lambda}{\ell - \lambda},
\end{align*}
where we have used the fact that $\lambda/\ell < 1/c < 1$ in the final step.  Thus
\begin{align*}
  c_\ell - c_{\ell-1} = \PP{Z \geq \ell - 1} - c \, \PP{Z \geq \ell} &= \PP{Z = \ell-1} - (c-1)\PP{Z \geq \ell} \\
  &> \PP{Z = \ell-1} \cdot \frac{\ell - c\lambda}{\ell - \lambda} > 0,
\end{align*}
which establishes the first claim of the lemma.

For the second claim, for $\ell \geq 2$, let $\lambda_\ell = \ell/c_\ell$ and $Z_\ell \sim \textnormal{Pois}(\lambda_\ell)$, so that 
\[
c_{\ell+1} - c_\ell = \PP{Z_\ell = \ell} - (c_\ell  - 1)\PP{Z_\ell \geq \ell+1} < \PP{Z_\ell = \ell}.
\]
Now, by the first part of the lemma, $\lambda_\ell \leq 2\ell/3$ for $\ell \geq 2$, so since $\lambda \mapsto \PP{\textnormal{Pois}(\lambda)=\ell}$ is strictly increasing on $(0,\ell)$, we see that
\[
c_{\ell+1} - c_\ell < e^{-2\ell/3}\frac{(2\ell/3)^\ell}{\ell!}.
\]
It follows that for every $L \geq 3$,
\begin{align*}
c_L = c_2 + \sum_{\ell=2}^{L-1} (c_{\ell+1} - c_\ell) < \frac{3}{2} + \sum_{\ell=2}^{L-1} e^{-2\ell/3}\frac{(2\ell/3)^\ell}{\ell!} \leq \frac{3}{2} + \frac{1}{\sqrt{2\pi}} \sum_{\ell=2}^{L-1} \frac{(2e^{1/3}/3)^\ell}{\ell^{1/2}}, 
\end{align*}
where we used the Stirling's formula bound $\ell! \geq (2\pi\ell)^{1/2}(\ell/e)^\ell$ in the final step.  Since $2e^{1/3}/3 < 1$, we conclude that $\lim_{\ell \rightarrow \infty} c_\ell < \infty$.
\end{proof}

\subsection{Proof of Lemma~\ref{lem:prob_bound_for_globalnull}}
By construction, 
\[(s_1,\dots,s_m)\mapsto \PP{\mathcal{U}_1(s_1)\cup\dots\cup\mathcal{U}_m(s_m)}\]
is coordinatewise nondecreasing in each $s_{i,j}$. Therefore, the supremum 
\[\sup\left\{ \PP{\mathcal{U}_1(s_1)\cup\dots\cup\mathcal{U}_m(s_m)}  \ : \ s_1,\dots,s_m \in[0,1]^m,  \ \sum_{j=1}^m s_{i,j}\leq \alpha i  \ \forall  \ i\right\}\]
can equivalently be written as
\[\sup\left\{ \PP{\mathcal{U}_1(s_1)\cup\dots\cup\mathcal{U}_m(s_m)}  \ : \ s_1,\dots,s_m \in[0,1]^m,  \ \sum_{j=1}^m s_{i,j} =  \alpha i  \ \forall  \ i\right\},\]
where the inequality constraint $\sum_{j=1}^m s_{i,j}\leq \alpha i$ has now been replaced by equality, $\sum_{j=1}^m s_{i,j}= \alpha i$. 

Now choose any $s_1,\dots,s_m$ satisfying these constraints. We have
\begin{multline*}\PP{\mathcal{U}_1(s_1)\cup\dots\cup\mathcal{U}_m(s_m)}
= \PP{\mathcal{U}_1(s_1)} + \PP{\mathcal{U}_1(s_1)^c\cap\big(\mathcal{U}_2(s_2)\cup\dots\cup\mathcal{U}_m(s_m)\big)} \\\leq  \PP{\mathcal{U}_1(s_1)} + \sum_{i=2}^m \PP{\mathcal{U}_1(s_1)^c\cap \mathcal{U}_i(s_i)},\end{multline*}
by a union bound. We also calculate
\[\PP{\mathcal{U}_1(s_1)} = \PP{U_1\leq s_{1,1} \textnormal{ or }\dots \textnormal{ or }U_m\leq s_{1,m}} \leq \sum_{j=1}^m \PP{U_j\leq s_{1,j}} = \sum_{j=1}^m s_{1,j} = \alpha,\]
again by a union bound.
And, for each $i\in\{2,\dots,m\}$,
\begin{multline*}\PP{\mathcal{U}_1(s_1)^c\cap \mathcal{U}_i(s_i)} = \PP{U_j>s_{1,j} \textnormal{ for all $j\in[m]$, and }\sum_{j=1}^m \One{U_j\leq s_{i,j}}\geq i}\\
\leq \PP{\sum_{j=1}^m \One{s_{1,j} < U_j\leq s_{i,j}}\geq i} .\end{multline*}
Note that $\sum_{j=1}^m \One{s_{1,j} < U_j\leq s_{i,j}}$ is a sum of independent Bernoulli random variables, where the $j$th random variable has probability $s_{i,j}-s_{1,j}$ of success, and moreover $\sum_{j=1}^m (s_{i,j}-s_{1,j}) = \alpha (i-1)$ by our constraints on $s_1,\dots,s_m$.
We therefore have
\[\PP{\sum_{j=1}^m \One{s_{1,j} < U_j\leq s_{i,j}}\geq i} \leq \PP{\textnormal{Pois}(\alpha(i-1)) \geq i},\]
by \citet[Theorem 4]{hoeffding1956distribution}, where we use the fact that $\alpha(i-1) + 1\leq i$. Combining all our calculations so far, we have
\begin{multline*}\PP{\mathcal{U}_1(s_1)\cup\dots\cup\mathcal{U}_m(s_m)}\leq \alpha + \sum_{i=2}^m \PP{\textnormal{Pois}(\alpha(i-1)) \geq i}\\ \leq \alpha + \sum_{i=1}^\infty \PP{\textnormal{Pois}(\alpha i) > i} = \alpha + \frac{\frac{1}{2}\alpha^2}{(1-\alpha)^2},\end{multline*}
where the last step holds by the following lemma:
\begin{lemma}\label{lem:Poisson_identities}
    For any $t\in(0,1)$,
    \[\sum_{k=1}^\infty \PP{\textnormal{Pois}(tk) \geq k} = \frac{t - \frac{1}{2}t^2}{(1-t)^2}\]
    and
    \[\sum_{k=1}^\infty \PP{\textnormal{Pois}(tk) > k} =  \frac{\frac{1}{2}t^2}{(1-t)^2}.\]
\end{lemma}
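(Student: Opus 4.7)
The plan is to reduce both series to integrals of a single sum $A(v)$, and then to identify $A(v) = (1-v)^{-3}$ via a Lagrange inversion identity for the tree function.

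First I would use the elementary identities $\frac{d}{ds}\PP{\textnormal{Pois}(s) \geq k} = \PP{\textnormal{Pois}(s) = k-1}$ and $\frac{d}{ds}\PP{\textnormal{Pois}(s) > k} = \PP{\textnormal{Pois}(s) = k}$ (obtained by termwise differentiation of the Poisson tail sum) to write each summand as an integral. Substituting $u = kv$ gives
\[\PP{\textnormal{Pois}(tk) \geq k} = k\int_0^t \PP{\textnormal{Pois}(kv) = k-1}\,dv, \qquad \PP{\textnormal{Pois}(tk) > k} = k\int_0^t \PP{\textnormal{Pois}(kv) = k}\,dv.\]
Summing over $k$ and interchanging sum with integral (by Tonelli, since the integrands are nonnegative), and using $\frac{(kv)^{k-1}}{(k-1)!} = k\cdot\frac{(kv)^{k-1}}{k!}$ and $\frac{(kv)^k}{k!} = kv\cdot\frac{(kv)^{k-1}}{k!}$, both series of interest are identified as $\int_0^t A(v)\,dv$ and $\int_0^t vA(v)\,dv$ respectively, where
\[A(v) := \sum_{k=1}^\infty \frac{k^2(kv)^{k-1}e^{-kv}}{k!}.\]

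The core step is to establish that $A(v) = (1-v)^{-3}$ for $v \in (0,1)$. I would derive this from Lagrange inversion applied to the tree function $T(x) := \sum_{k\geq 1}\frac{k^{k-1}}{k!}x^k$, which satisfies $T = xe^T$ with local inverse $x(T) = Te^{-T}$. Implicit differentiation yields $xT'(x) = T/(1-T)$, and a further differentiation gives $(xT'(x))' = T'(x)/(1-T)^2$. Evaluating at $x = ve^{-v}$, where $T = v$ and $T'(x) = e^v/(1-v)$, produces $e^v/(1-v)^3$. On the other hand, termwise differentiation of the series gives $(xT'(x))' = \sum_{k\geq 1}\frac{k^{k+1}}{k!}x^{k-1}$, and substituting $x = ve^{-v}$ together with the algebraic identity $k^{k+1}v^{k-1} = k^2(kv)^{k-1}$ identifies this sum with $e^vA(v)$. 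Comparing the two expressions yields $A(v) = (1-v)^{-3}$.

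With $A(v) = (1-v)^{-3}$ in hand, the two claimed formulas follow by direct antidifferentiation: $\int_0^t (1-v)^{-3}\,dv = \tfrac{1}{2(1-t)^2} - \tfrac{1}{2} = \frac{t - t^2/2}{(1-t)^2}$, and $\int_0^t v(1-v)^{-3}\,dv = \frac{t^2/2}{(1-t)^2}$ (via the substitution $w = 1-v$ or partial fractions). The main obstacle is the Lagrange inversion step establishing $A(v) = (1-v)^{-3}$: this is a classical identity (equivalent to the second-moment formula for the Borel distribution with parameter $v$), but a self-contained derivation requires carefully setting up the tree function and verifying convergence on the relevant domain. An alternative route via the Galton--Watson interpretation of the Borel distribution is possible but ultimately relies on the same combinatorial identity; all other steps---the integral representation, the Tonelli interchange, and the final antidifferentiation---are routine.
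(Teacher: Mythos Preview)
Your proposal is correct and takes a genuinely different route from the paper's proof. The paper expresses each Poisson tail via the incomplete Gamma series $\gamma(s,x) = x^s\sum_{i\geq 0}\frac{(-x)^i}{i!(s+i)}$, swaps the order of summation to obtain a power series in $t$, and then closes the inner sum using the Stirling-number identity $\sum_{k=1}^r \frac{(-1)^{r-k}k^{r+1}}{k!(r-k)!} = \stirling{r+1}{r} = \binom{r+1}{2}$; the two identities are treated in parallel. You instead write each tail as an integral of a Poisson point mass, reduce both series simultaneously to $\int_0^t A(v)\,dv$ and $\int_0^t vA(v)\,dv$ with $A(v)=\sum_{k\geq 1}k^2(kv)^{k-1}e^{-kv}/k!$, and identify $A(v)=(1-v)^{-3}$ via the tree function $T(x)=\sum_{k\geq 1}k^{k-1}x^k/k!$ evaluated at $x=ve^{-v}$. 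Your approach has the appeal of unifying the two identities through a single generating function and making the link to the Borel distribution explicit; the paper's approach is arguably more self-contained, since the Stirling-number step is a one-line finite-sum identity rather than an appeal to Lagrange inversion. All of your steps check out, including the convergence of $A(v)$ for $v\in(0,1)$ (the terms decay like $\sqrt{k}(ve^{1-v})^k$ with $ve^{1-v}<1$) and the final antiderivatives.
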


Finally, if $\alpha\leq 0.5$, then we have $\alpha + \frac{\frac{1}{2}\alpha^2}{(1-\alpha)^2}\leq \alpha+ 2\alpha^2$, while if $\alpha>0.5$ then the result holds trivially since $\alpha+ 2\alpha^2>1$.

\begin{proof}[Proof of Lemma~\ref{lem:Poisson_identities}]
We will use the following standard facts: first, the Poisson distribution can be related to the Gamma distribution via
\begin{equation}
\label{eqn:poisson_vs_gamma}
\PP{\textnormal{Pois}(\lambda)\geq\ell} = \PP{\textnormal{Gamma}(\ell,1)\leq\lambda}
\end{equation}
for all $\lambda > 0$, $\ell \in \mathbb{N}$ \citep[e.g.][p.~190]{johnson1992univariate}; and second, the incomplete Gamma function $\gamma(s,x) = \Gamma(s)\cdot \PP{\textnormal{Gamma}(s,1)\leq x}$ can be expressed as the series
\begin{equation}\label{eqn:gamma_series}
\gamma(s,x) = x^s\sum_{i=0}^\infty \frac{(-x)^i}{i!(s+i)}
\end{equation}
for $s,x > 0$ \citep[][Eq.~(8.7.2)]{paris2010incomplete}.  Combining the two facts~\eqref{eqn:poisson_vs_gamma} and~\eqref{eqn:gamma_series}, we obtain for $k \in \mathbb{N}$ that
\[
\PP{\textnormal{Pois}(tk)\geq k} = \frac{\gamma(k,tk)}{\Gamma(k)} = (tk)^k \sum_{i=0}^\infty \frac{(-tk)^i}{i!(k+i)}\cdot \frac{1}{\Gamma(k)} =k(tk)^k \sum_{i=0}^\infty \frac{(-tk)^i}{i!(k+i)}\cdot \frac{1}{k!},
\]
since $\Gamma(k)=(k-1)!$. Therefore,
\begin{align*}
    \sum_{k=1}^\infty \PP{\textnormal{Pois}(tk)\geq k}
    &=\sum_{k=1}^\infty k(tk)^k \sum_{i=0}^\infty \frac{(-tk)^i}{i!(k+i)}\cdot \frac{1}{k!}\\
    &=\sum_{r=1}^\infty \sum_{k=1}^r k(tk)^k \frac{(-tk)^{r-k}}{(r-k)!r}\cdot \frac{1}{k!}\textnormal{\quad by setting $r=k+i$}\\
    &=\sum_{r=1}^\infty \frac{t^r}{r}\sum_{k=1}^r \frac{k^{r+1}(-1)^{r-k}}{k!(r-k)!}\\
    &=\sum_{r=1}^\infty \frac{t^r}{r} \cdot \binom{r+1}{2} 
    =\sum_{r=1}^\infty t^r \cdot \frac{r+1}{2} = \frac{t - \frac{1}{2}t^2}{(1-t)^2},
\end{align*}
where to move to the last line, we use the fact that $\sum_{k=1}^r \frac{(-1)^{r-k}k^{r+1}}{k!(r-k)!} = \stirling{r+1}{r} = \binom{r+1}{2}$, where $\stirling{a}{b}$ denotes a Stirling number of the second kind \citep[][p.7]{boyadzhiev2009exponential}.  

Next, again combining the two facts~\eqref{eqn:poisson_vs_gamma} and~\eqref{eqn:gamma_series}, we have
\[\PP{\textnormal{Pois}(tk)> k} = \PP{\textnormal{Pois}(tk)\geq k+1} = \frac{\gamma(k+1,tk)}{\Gamma(k+1)}  =(tk)^{k+1} \sum_{i=0}^\infty \frac{(-tk)^i}{i!(k+i+1)}\cdot \frac{1}{k!}.\]
Therefore,
\begin{align*}
    \sum_{k=1}^\infty \PP{\textnormal{Pois}(tk)> k}
    &=\sum_{k=1}^\infty (tk)^{k+1} \sum_{i=0}^\infty \frac{(-tk)^i}{i!(k+i+1)}\cdot \frac{1}{k!}\\
    &=\sum_{r=1}^\infty \sum_{k=1}^r (tk)^{k+1} \frac{(-tk)^{r-k}}{(r-k)!(r+1)}\cdot \frac{1}{k!} \textnormal{\quad by setting $r=k+i$}\\
    &=\sum_{r=1}^\infty \frac{t^{r+1}}{r+1}\sum_{k=1}^r \frac{k^{r+1}(-1)^{r-k}}{k!(r-k)!}\\
    &=\sum_{r=1}^\infty \frac{t^{r+1}}{r+1} \cdot \binom{r+1}{2} 
    =\sum_{r=1}^\infty t^{r+1} \cdot \frac{r}{2} = \frac{\frac{1}{2}t^2}{(1-t)^2},
\end{align*}
where to move to the last line, we again use the fact that $\sum_{k=0}^r \frac{(-1)^{r-k}k^{r+1}}{k!(r-k)!}= \stirling{r+1}{r} = \binom{r+1}{2}$.
\end{proof}

\end{document}